\newcommand{\be}{\begin{eqnarray}}
\newcommand{\ben}{\begin{eqnarray*}}
\newcommand{\en}{\end{eqnarray}}
\newcommand{\enn}{\end{eqnarray*}}
\newtheorem{theorem}{Theorem}[section]
\newtheorem{lemma}{Lemma}[section]
\newtheorem{prp}[theorem]{Proposition}
\newtheorem{thm}[theorem]{Theorem}
\newtheorem{cor}[theorem]{Corollary}
\newtheorem{dfn}{Definition}[section]
\newtheorem{remark}{Remark}
\begin{document}
\renewcommand{\theequation}{\arabic{section}.\arabic{equation}}
\begin{titlepage}
\title{\bf Splitting up method for 2D stochastic primitive equations with multiplicative noise
}
\author{ Xuhui Peng$^{1}$,\ \ Rangrang Zhang$^{2,}$\thanks{Corresponding author.} \\
{\small $^1$
LCSM, Ministry of Education, School of Mathematics and Statistics, }\\
{\small Hunan Normal University, Changsha, Hunan 410081, P. R. China, }\\
{\small $^2$  Department of  Mathematics,
Beijing Institute of Technology, Beijing, 100081, China.}\\
 {\sf xhpeng@hunnu.edu.cn},\ {\sf rrzhang@amss.ac.cn}}
\date{}
\end{titlepage}
\maketitle

\noindent\textbf{Abstract}:
 This paper concerns the convergence of an iterative scheme for 2D stochastic primitive equations on a bounded domain. The stochastic system is split into two equations: a deterministic 2D primitive equations with random initial value and a linear stochastic parabolic equation, which are both simpler for numerical computations. An estimate of approximation error is given, which implies that the strong speed rate of the convergence in probability is almost $\frac{1}{2}$.

\noindent\textbf{MSC}: Primary 60H15, 60H30; Secondary 76D06, 76M35.

\noindent\textbf{Keywords}: Splitting up method, primitive equations, approximation error, speeding of convergence in probability, stopping time.

\section{Introduction}
In this paper, we focus on the convergence of some iterative schemes for 2D stochastic primitive equations, which is helpful for numerical approximation. As a fundamental model in meteorology, the primitive equations
were derived from the Navier-Stokes equations, with rotation, coupled with thermodynamics
and salinity diffusion-transport equations (see \cite{L-T-W-1,L-T-W-2,JP}). This model in the deterministic case has been intensively investigated because of the interests stemmed from physics and mathematics. For example, the mathematical study of the primitive equations originated in a series of articles by Lions, Temam, and Wang in the early 1990s (see \cite{L-T-W-1,L-T-W-2,L-T-W-3,L-T-W-4} and the references therein), where they set up the mathematical framework and showed the global existence of weak solutions. Cao and Titi \cite{C-T-1} developed an approach to dealing with the $L^6$-norm of the fluctuation $\tilde{v}$ of horizontal velocity and obtained the global well-posedness for the 3D viscous primitive equations.
\

Along with the great successful developments of deterministic
primitive equations, the random situation has also been developed rapidly.  For 3D stochastic primitive equations, Guo and Huang \cite{Guo} obtained
the existence of universal random attractor of strong solution under the assumptions that the momentum
equation is driven by an additive stochastic forcing and the
thermodynamical equation is under a fixed heat source. Debussche, Glatt-Holtz, Temam and Ziane \cite{D-G-T-Z} established the global well-posedness of the strong solution when this model is
 driven by multiplicative random noises. Dong et. al. \cite{RR} studied its ergodic theory and proved that all weak solutions which are limits of spectral Galerkin approximations share the same invariant measure. Moreover, they established a large deviation principle for this model in \cite{D-Z-Z}. For 2D stochastic primitive equations, Gao and Sun \cite{G-S} obtained its global well-posedness and Freidlin-Wentzell's large deviations.

The aim of this paper is to study numerical approximations to 2D stochastic primitive equations. There are many literature on this topic for stochastic parabolic differential equations. For example, using the semigroup and the cubature techniques, D\"{o}rsek \cite{D} studied the weak speed of convergence of a certain time-splitting scheme combing with a Galerkin approximation in the space variable for the stochastic Navier-Stokes equations with an additive noise.
The strong convergence of the splitting up method has already been studied in a series of papers by Gy\"{o}ngy and Krylov (see \cite{G-K-1,G-K-2} etc), where the rate of convergence is obtained based on stochastic calculus. However, the linear setting used in their papers does not cover some hydrodynamical models, such as stochastic Navier-Stokes equations, stochastic primitive equations and so on.  Recently, Bessaih, Brze\'{z}niak and Millet \cite{B-B} studied the splitting up method for the strong solution of 2D stochastic Navier-Stokes equations on a torus in the space $L^2([0,T];V)$ and proved that the strong speed of convergence in probability is almost $\frac{1}{2}$.

In this paper, we devote to obtaining the strong speed of the convergence in probability for 2D stochastic primitive equations using the splitting up method from \cite{B-B}. The splitting up method is implemented by using two consecutive steps on each time interval. The first step is to solve the deterministic 2D primitive equations with random initial value. The second step is to solve a stochastic parabolic equation. The corresponding solutions are denoted by  $v^n$ and $\eta^n$ (see (\ref{qq-1}) and (\ref{qq-2})), respectively. Our aim is to establish the approximation error of  $v^n-v$ and $\eta^n-v$ in the space $L^{\infty}([0,T]; H)\cap L^2([0,T]; V)$.
 During the proof process, the uniform $V-$norm estimates $\mathbb{E}\sup_{t\in[0,T]}\|v(t)\|^2$ of strong solution plays a key role (see Proposition \ref{prp-8}). In \cite{B-B}, the authors obtained such estimates of 2D stochastic Navier-Stokes equations by transforming this model into a curvature equation and utilizing its cancelation property in $H\subset L^2$. However, for 2D stochastic primitive equations, we have no uniform $V-$norm estimates, only $\mathbb{E}\int^T_0 \|v(t)\|^2 dt\leq C$ is available, which leads to some difficulties. For example, during the proof process of Proposition \ref{prp-8}, the index of $\|v(t)\|$ in $I(t)$ has to be strictly less than $2$. Otherwise, we will encounter $\mathbb{E}\int^T_0 \|v(t)\|^{\alpha} dt$ for $\alpha>2$ after using H\"{o}lder inequality.
To overcome this difficulty, we
 divide $\|v(t)\|$ into several parts with small index and make use of uniform $V-$estimates of $v^n$ and uniform $H-$estimates of $v, \partial_z v$ (for details, see Proposition \ref{prp-8}). Moreover, in order to obtain the uniform $V-$estimates of $v^n$, an appropriate stopping time is introduced (see Lemma \ref{lem-8}). Besides, compared with 2D stochastic Navier-Stokes equations, we need to make additional $H-$estimates of $\partial_z v^n$ and $\partial_z \eta^n$ appeared in the estimations of nonlinear terms (see Lemma \ref{lemma-1}).

Specifically, for any $n\geq 1$, set the error term
\begin{eqnarray*}
e_n(T)&:=&\sup_{k=0,\cdot\cdot\cdot, n}\Big(|v^n(t^+_{k})-v(t_k)|+|\eta^n(t^-_{k})-v(t_k)|\Big)\\
&&\ +\left(\int^T_0\|v^n(s)-v(s)\|^2ds\right)^{\frac{1}{2}}+\left(\int^T_0\|\eta^n(s)-v(s)\|^2ds\right)^{\frac{1}{2}}.
\end{eqnarray*}
Under some conditions, the main result we obtain is
\begin{thm}\label{thm-4}
Let $\varepsilon \in [0,1)$. Under \textbf{Hypotheses A-C}, the error term $e_n(T)$ converges to 0 in probability with the speed almost $\frac{1}{2}$. Precisely, for any sequence $l(n)_{n\geq 1}$ converging to $\infty$, we have
\begin{eqnarray*}
\lim_{n\rightarrow \infty}\mathbb{P}\Big(e_n(T)\geq \frac{l(n)}{\sqrt{n}}\Big)=0.
\end{eqnarray*}
\end{thm}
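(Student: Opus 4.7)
The plan is to follow the stopping-time-plus-Gronwall strategy of Bessaih--Brze\'{z}niak--Millet, modified to compensate for the lack of an a priori bound on $\mathbb{E}\sup_{t\le T}\|v(t)\|^2$ that the authors themselves flag as the central obstacle. First I would localize by introducing a stopping time of the form
\[
\tau_N = \inf\Big\{t\in[0,T]: \sup_{s\le t}\bigl(|v(s)|^2+|\pa_z v(s)|^2\bigr) + \int_0^t\|v(s)\|^2\,\md s + \sup_{s\le t}\|v^n(s)\|^2 \ge N\Big\}\wedge T,
\]
so that on $\{\tau_N=T\}$ every quantity needed in the energy estimate is pathwise bounded by a deterministic constant depending only on $N$. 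Proposition \ref{prp-8} and Lemma \ref{lem-8} provide the uniform $V$-bound for $v^n$ up to such a stopping time, while the known regularity of $v$ (an $L^2_tV$ bound together with $H$-bounds for $v$ and $\pa_z v$) ensures $\mathbb{P}(\tau_N<T)\to 0$ as $N\to\infty$.

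Next I would derive the error equation on each subinterval $[t_k,t_{k+1}]$ by subtracting the true equation from the deterministic step for $v^n$ and the linear stochastic step for $\eta^n$. Applying It\^o's formula to $|v^n(t)-v(t)|^2$ and $|\eta^n(t)-v(t)|^2$ and summing over $k$ gives an identity of the schematic form
\[
\md|e_n|^2 + 2\nu\|e_n\|^2\,\md t = 2\langle B(v,v)-B(v^n,v^n),\,e_n\rangle\,\md t + \mathcal{E}_n(t)\,\md t + \md M_n(t),
\]
where $e_n$ is one of the two error processes, $M_n$ is a martingale, and $\mathcal{E}_n$ collects the splitting residuals produced by freezing $\sigma$ at the left endpoint of each subinterval and by alternating the deterministic and stochastic evolutions. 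Under Hypothesis C the Lipschitz property of $\sigma$ together with standard BDG-type estimates on intervals of length $T/n$ makes $\int_0^T|\mathcal{E}_n|\,\md t$ of order $1/n$ in $L^2(\Omega)$ on $\{\tau_N=T\}$.

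The main obstacle lies in the nonlinear term. I would split
\[
B(v,v)-B(v^n,v^n) = -B(e_n,v^n) - B(v,e_n)
\]
and estimate each piece through the anisotropic PE inequalities that involve $|\pa_z v|_H$ and $|\pa_z v^n|_H$ (the latter requiring the extra $\pa_z$-estimates signposted in Lemma \ref{lemma-1}), with the aim of producing a bound of the form
\[
|\langle B(v,v)-B(v^n,v^n),e_n\rangle| \le \tfrac{\nu}{2}\|e_n\|^2 + C_N\bigl(1+\|v^n(t)\|^2 + \varphi(t)\bigr)|e_n|^2,
\]
for an $\omega$-dependent function $\varphi\in L^1(0,T)$ a.s. on $\{\tau_N=T\}$. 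The delicate point, exactly as flagged in the introduction, is that any occurrence of $\|v(t)\|$ must be kept at a power strictly less than $2$; one writes $\|v\|\le \|v^n\|+\|e_n\|$ and uses H\"older only for the low-power pieces, so that the remaining integrals involve $\int_0^T\|v\|^2\,\md t$ (a.s.\ bounded on $\{\tau_N=T\}$) rather than its higher powers, whose expectations are unavailable.

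After absorbing the gradient term on the left, a pathwise Gronwall inequality applied on $[0,\tau_N]$, together with a standard BDG estimate for $M_n$, yields
\[
\mathbb{E}\Big[\mathbf{1}_{\{\tau_N=T\}}\Big(\sup_{t\le T}|e_n(t)|^2 + \int_0^T\|e_n(s)\|^2\,\md s\Big)\Big] \le \frac{C_N}{n}.
\]
Given any $l(n)\to\infty$, I then pick $N=N(n)\to\infty$ slowly enough that $C_{N(n)}/l(n)^2\to 0$ and decompose
\[
\mathbb{P}\bigl(e_n(T)\ge l(n)/\sqrt{n}\bigr) \le \mathbb{P}\bigl(\tau_{N(n)}<T\bigr) + \frac{n}{l(n)^2}\,\mathbb{E}\bigl[\mathbf{1}_{\{\tau_{N(n)}=T\}}\,e_n(T)^2\bigr];
\]
both summands tend to zero, which is the announced almost-$\tfrac12$ rate of convergence in probability. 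Beyond the nonlinear bookkeeping of Step~3, the only additional care needed is to tune $N(n)$ so that the blow-up of $C_N$ in $N$ is dominated by $l(n)$, which is always possible since $l(n)\to\infty$ is free to be chosen.
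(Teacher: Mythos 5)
Your overall strategy---localize with a stopping time, run an It\^o-plus-Gronwall argument on the error, and then combine Chebyshev's inequality with a truncation level $N(n)$ growing slowly enough relative to $l(n)$---is the same as the paper's, and your final tuning step is handled correctly. But there are two genuine gaps. The first is in your stopping time: you cap $\sup_{s\le t}\|v^n(s)\|^2$ directly and assert that $\mathbb{P}(\tau_N<T)\to 0$ as $N\to\infty$ by invoking Lemma \ref{lem-8} and Proposition \ref{prp-8}. This is circular: those results bound $\mathbb{E}\sup_s\|v^n(s)\|^2$ only \emph{after} localizing by the integral-type stopping time $\tau^N_n$ of (\ref{eq-74}), and no uniform-in-$n$ control of $\sup_s\|v^n(s)\|^2$ is available a priori (this is exactly the missing uniform $V$-estimate the introduction warns about). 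The paper's $\tau^N_n$ is instead built from the per-subinterval condition $\sup_i\int_{t_i\wedge t}^{t_{i+1}\wedge t}(|v^n|^2\|v^n\|^2+|r^n|\|r^n\|)\,ds>\frac{N}{n}$, whose complement is controlled by the \emph{unlocalized} moment bounds of Lemmas \ref{lem-4} and \ref{lem-6}; moreover it is precisely this per-step smallness that lets the Gronwall factor in Lemma \ref{lem-8} accumulate to only $e^{C_1N}$ over the $n$ subintervals rather than blowing up. Your stopping time would have to be replaced by something of this form for the argument to close.

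The second gap is the treatment of the error equation. The processes $v^n$ and $\eta^n$ jump at every grid point, so ``applying It\^o's formula to $|v^n(t)-v(t)|^2$ and $|\eta^n(t)-v(t)|^2$ and summing over $k$'' does not yield the clean identity you write: the jump contributions at the $t_k$ \emph{are} the splitting error and must be tracked explicitly. The paper circumvents this by introducing the continuous auxiliary process $Z^n$, proving the Gronwall estimate for $\sup_t|Z^n(t)-v(t)|^2$, and only then transferring to $v^n$ and $\eta^n$ at the grid points via $Z^n(t_k)=v^n(t_k^+)=\eta^n(t_k^-)$ together with Propositions \ref{prp-1}, \ref{prp-2} and Corollary \ref{cor-2}; your proposal contains no substitute for this device. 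Relatedly, the claim that the splitting residuals are $O(1/n)$ in $L^2(\Omega)$ ``under Hypothesis C and the Lipschitz property'' compresses most of Section 4: it requires the higher-moment $H$- and $V$-estimates, the separate $\partial_z$-estimates of Lemmas \ref{lem-5} and \ref{lem-6}, and in particular the $L^8$ control of $\partial_z(Z^n-v^n)$ needed to absorb the term $\|v\|^{3/2}|\partial_z(Z^n-v^n)|^2$ in $I(t)$ by H\"older against $\mathbb{E}\int_0^T\|v\|^2\,ds$, which is what forces the restrictive assumptions $L_2=R_2=0$ and $K_2<\frac{2}{147}$ in Proposition \ref{prp-8}. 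Your remark that powers of $\|v\|$ must stay below $2$ identifies the right difficulty, but the proposed substitution $\|v\|\le\|v^n\|+\|e_n\|$ is not how the paper resolves it and would reintroduce $\|v^n\|$ at powers you have not controlled pathwise.
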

Here, $\varepsilon$ is a parameter appeared in Hypotheses A-C, which will be described in Sect. 4 and 5.

This paper is organized as follows: In Sect. 2, the mathematical framework is introduced.  We obtain the global well-posedness of strong solution in Sect. 3.  In Sect. 4, the splitting up method is presented, where two approximation equations to the primitive equations are constructed. Further, $H$ and $V-$norm estimates of the difference between the two approximation equations are established, respectively. Finally, An auxiliary process is introduced for technical reasons.  In Sect. 5, the speed rate of the convergence in probability is obtained.

%

\section{The mathematical framework}
The two dimensional primitive equations can be formally derived from the full three dimensional system under the assumption of invariance with respect to the second horizontal variable $y$ as in \cite{Ziane}.
The 2D primitive equations driven by a stochastic forcing in a Cartesian system can be written as
\begin{eqnarray}\label{eq-1}
\frac{\partial v}{\partial t}-\mu\Delta v+v\partial_{x} v+\theta\partial_{z} v+\partial_{x} p &=&\psi(t,v)\frac{dW}{dt},\\
\label{eq-3}
\partial_{x} v+\partial_{z}\theta&=&0,
\end{eqnarray}
where the velocity $v=v(t,x,z)\in \mathbb{R}$, the vertical velocity $\theta$ and the pressure\ $p$ are all unknown functionals. $(x,z)\in \mathcal{M}=[0,L]\times [-h,0]$. $W$ is a cylindrical Winner process, which will be given in Sect. \ref{st-1}. $\Delta=\partial^{2}_{x}+\partial^{2}_{z}$ is the Laplacian operator. Note that $p$ is independent of the vertical variable $z$.

We impose the following boundary conditions:
\begin{eqnarray}\label{eq1}
 \partial_{z}v=0, \ \theta=0  & &  {\rm {on}} \ \Gamma_{\textit{u}}=(0,L)\times \{0\},\\
 \label{eq2}
 \partial_{z}v=0,\  \theta=0       & &   {\rm {on}}\ \Gamma_{\textit{b}}=(0,L)\times\{-h\},\\
 \label{eq3}
 v=0 & &   {\rm {on}}\  \Gamma_{l}=\{0,L\}\times (-h,0).
 \end{eqnarray}
 Without loss of generality, we assume that
$$
\mu=1, \quad \int^{0}_{-h}v dz=0.
$$
Integrating (\ref{eq-3}) from $-h$ to $z$ and using  (\ref{eq1}), (\ref{eq2}), we have
\begin{equation}
\theta(t,x,z):=\Phi(v)(t,x,z)=-\int^{z}_{-h}\partial_x v (t,x,z')dz'.
\end{equation}
Then, (\ref{eq-1})-( \ref{eq3}) can be rewritten as
 \begin{eqnarray}\label{eq5-1}
&\frac{\partial v}{\partial t}-\Delta v+v\partial_{x} v+\Phi(v)\partial_{z} v+\partial_{x} p =\psi(t,v)\frac{dW}{dt},&\\
\label{eq-10-1}
&\partial_{z}v|_{\Gamma_{\textit{u}}}=0,\quad \partial_{z}v|_{\Gamma_{\textit{b}}}=0,\quad  v|_{\Gamma_{l}}=0.&
\end{eqnarray}
The initial condition is given by
 \begin{equation}\label{eq-11-1}
 v(0)=v_0.
 \end{equation}
\subsection{Some functional spaces }
Let $\mathcal{L}(K_1;K_2)$ (resp. $\mathcal{L}_2(K_1;K_2)$) be the space of bounded (resp. Hilbert-Schmidt) linear operators from the Hilbert space $K_1$ to $K_2$, whose norm is denoted by $\|\cdot\|_{\mathcal{L}(K_1;K_2)}(\|\cdot\|_{\mathcal{L}_2(K_1;K_2)})$. For $p\in \mathbb{Z}^+$, set
\begin{eqnarray*}
 |\phi|_p=\left\{
            \begin{array}{ll}
              \Big(\int_{\mathcal{M}}|\phi(x,z)|^pdxdz\Big)^{\frac{1}{p}}, &\forall \phi\in L^p(\mathcal{M}), \\
               \Big(\int^l_{0}|\phi(x)|^pdx\Big)^{\frac{1}{p}}, & \forall \phi\in L^p((0,L)).
            \end{array}
          \right.
\end{eqnarray*}
In particular, $|\cdot|$ and $(\cdot,\cdot)$ represent norm and inner product of $L^2(\mathcal{M})$ (or $L^2((0,L))$), respectively. For $m\in \mathbb{N}_+$, $(W^{m,p}(\mathcal{M}), \|\cdot\|_{m,p})$ stands for the classical Sobolev space, see \cite{Adams}. When $p=2$, we denote by $H^m(\mathcal{M})=W^{m,2}(\mathcal{M})$,
\begin{equation}\notag
\left\{
  \begin{array}{ll}
    H^{m}(\mathcal{M})=\Big\{v\Big| \partial_{\alpha}v\in (L^2(\mathcal{M}))^2\ {\rm for} \ |\alpha|\leq m\Big\},&  \\
    |v|^2_{H^{m}(\mathcal{M})}=\sum_{0\leq|\alpha|\leq m}|\partial_{\alpha}v|^2. &
  \end{array}
\right.
\end{equation}
It's known that $(H^{m}(\mathcal{M}), |\cdot|_{H^{m}(\mathcal{M})})$ is a Hilbert space. $|\cdot|_{H^p((0,L))}$ stands for the norm of $H^p((0,L))$ for $p\in \mathbb{Z}^+$.

 Define our working spaces for (\ref{eq5-1})-(\ref{eq-11-1})
 \begin{eqnarray*}
 &&H:=\left\{v\in L^{2}(\mathcal{M})^2:\  \int^{0}_{-h} v dz=0  \right\},\\
 &&V:=\left\{v\in (H^{1}(\mathcal{M}))^2:\  \int^{0}_{-h} v dz=0, \ v=0 \ {\rm{on}}\ \Gamma_l \right\},
 \end{eqnarray*}
The space $H$ is endowed with the $L^2$ inner product
\begin{eqnarray*}
(v,\tilde{v})=\int_{\mathcal{M}}v\tilde{v}dxdz.
\end{eqnarray*}
The norm of $H$ is denoted by $|v|=(v,v)^{\frac{1}{2}}$.
The inner product and norm in the space $V$ are given by
\begin{eqnarray*}
((v,\tilde{v}))&=&\int_{\mathcal{M}}(\partial_x v  \partial_x \tilde{v}+\partial_z v  \partial_z \tilde{v})dxdz,
\end{eqnarray*}
and taking $\|\cdot\|=\sqrt{((\cdot,\cdot))}$.
Note that under the above definition, a Poincar$\acute{e}$ inequality $|v|\leq C\|v\|$ holds for all $v\in V$.


Define the intermediate space
\[
\mathcal{H}=\{v\in H, \partial_z v\in H\}.
\]

Let $V'$ be the dual space of $V$. We have the dense and continuous embeddings
\[
V\hookrightarrow H = H'\hookrightarrow V',
\]
and denote by $\langle x, y\rangle$ the duality between $x\in V$ and $y\in V'$.
\subsection{Some functionals}\label{st-1}
 The Leray operator $P_H$ is the orthogonal projection of $L^{2}(\mathcal{M})$ onto $H$. Define a Stokes-type operator $A$ as a bounded map from $V$ to $V'$ as $\langle v, Au \rangle=((v,u))$. $A$ can be extends to an unbounded operator from $H$ to $H$ according to $Av=-P_H \Delta v$ with the domain:
 \[
 D(A)=\Big\{v\in H^2(\mathcal{M}): \int^{0}_{-h} v dz=0,\ v=0 \ {\rm{on}}\ \Gamma_l,\ \partial_z v=0\  {\rm{on}}\ \Gamma_u\cup \Gamma_b\Big\}.
 \]
It's well-known that $A$ is a self-adjoint and positive definite operator. Due to the regularity results of the Stokes problem of geophysical fluid dynamics, we have $|Av|\cong |v|_{H^2(\mathcal{O})}$, see \cite{Z}.

For the nonlinear terms, let
\[
B(v, \tilde{v}):=P_H(v\partial_x\tilde{v}+\Phi(v)\partial_z\tilde{v} ).
\]
We establish that $B$ is a well-defined and continuous mapping from $V\times V\rightarrow V'$ according to
\[
\langle B(u,v),\phi\rangle=b(u,v,\phi),
\]
where the associated trilinear form is given by
\[
b(u,v,\phi):=\int_{\mathcal{M}}(u\partial_x v \phi+\Phi(u)\partial_z v\phi)d\mathcal{M}.
\]
This is contained in the following lemma, which is established in \cite{Ziane}.
\begin{lemma}\label{lemma-1}
 $b$ is a continuous linear form on $V\times V\times V$ and satisfies
  \begin{eqnarray}\label{ee-3}
|b(v,\tilde{v},\hat{v})|=|\langle B(v,\tilde{v}), \hat{v}\rangle|&\leq& C\|\tilde{v}\||v|^{\frac{1}{2}}\|v\|^{\frac{1}{2}}|\hat{v}|^{\frac{1}{2}}\|\hat{v}\|^{\frac{1}{2}}+C|\partial_z \tilde{v}|\|v\||\hat{v}|^{\frac{1}{2}}\|\hat{v}\|^{\frac{1}{2}}\\
\label{ee-4}
|\langle B(\tilde{v},\tilde{v})-B(\hat{v},\hat{v}), \tilde{v}-\hat{v}\rangle|&\leq& C\|\tilde{v}\||\tilde{v}-\hat{v}|\|\tilde{v}-\hat{v}\|+C|\partial_z \tilde{v}|\|\tilde{v}-\hat{v}\|^{\frac{3}{2}}|\tilde{v}-\hat{v}|^{\frac{1}{2}},
\end{eqnarray}
for any $v, \tilde{v},\hat{v} \in V$. Moreover, $b$ satisfies the cancellation property $b(u,v,v)=0$ and
 \begin{eqnarray}\label{ee-2}
b(v,\tilde{v},\hat{v})=-b(v,\hat{v},\tilde{v}).
  \end{eqnarray}
\end{lemma}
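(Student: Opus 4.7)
I would prove the four claims in the stated order: cancellation identities first, then the trilinear bound (\ref{ee-3}), and finally the difference bound (\ref{ee-4}).

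\textbf{Step 1: Cancellation.} The antisymmetry $b(v,\tilde v,\hat v)=-b(v,\hat v,\tilde v)$ reduces, after adding the two sides, to evaluating $\int_{\mathcal M}\bigl(v\,\partial_x+\Phi(v)\,\partial_z\bigr)(\tilde v\hat v)\,d{\mathcal M}$ and integrating by parts. The boundary terms vanish because $\tilde v,\hat v$ vanish on $\Gamma_l$, and because $\Phi(v)$ vanishes both at $z=-h$ (by its definition) and at $z=0$ (since $\int_{-h}^0\partial_x v\,dz=\partial_x\int_{-h}^0 v\,dz=0$ by the mean-zero condition). The interior term vanishes by the built-in relation $\partial_x v+\partial_z\Phi(v)=0$. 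Setting $\hat v=\tilde v$ then gives $b(u,v,v)=0$.

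\textbf{Step 2: Trilinear bound.} The standard Navier–Stokes-type piece $\int_{\mathcal M} v\,\partial_x\tilde v\,\hat v$ is handled by Hölder followed by the 2D Ladyzhenskaya inequality $|\phi|_4\le C|\phi|^{1/2}\|\phi\|^{1/2}$ applied to $v$ and $\hat v$, producing the first summand of (\ref{ee-3}). For the new term $\int_{\mathcal M}\Phi(v)\partial_z\tilde v\,\hat v$, I would first use $\sup_z|\Phi(v)(x,\cdot)|\le h^{1/2}|\partial_x v(x,\cdot)|_{L^2_z}$ and then apply Hölder in the vertical and horizontal slices to reach a bound of the shape
\[
\Bigl|\int_{\mathcal M}\Phi(v)\partial_z\tilde v\,\hat v\,d{\mathcal M}\Bigr|\le C\,\|v\|\,|\partial_z\tilde v|\,\sup_{x\in(0,L)}|\hat v(x,\cdot)|_{L^2_z}.
\]
The remaining $\sup_x$-factor is controlled by a one-dimensional Agmon-type estimate $\sup_x|\hat v(x,\cdot)|_{L^2_z}^{\,2}\le C|\hat v|\,\|\hat v\|$, obtained via the fundamental theorem of calculus in $x$ combined with Cauchy–Schwarz. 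This yields the second summand of (\ref{ee-3}).

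\textbf{Step 3: Difference bound.} I would use the algebraic identity $B(\tilde v,\tilde v)-B(\hat v,\hat v)=B(\tilde v-\hat v,\tilde v)+B(\hat v,\tilde v-\hat v)$; testing against $\tilde v-\hat v$ kills the second term by Step 1. Applying (\ref{ee-3}) to the first term with first argument $\tilde v-\hat v$, second argument $\tilde v$, and third argument $\tilde v-\hat v$, the exponents collapse to $|\tilde v-\hat v|\,\|\tilde v-\hat v\|$ in the first summand and to $\|\tilde v-\hat v\|^{3/2}|\tilde v-\hat v|^{1/2}$ in the second, giving (\ref{ee-4}).

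\textbf{Expected obstacle.} The delicate point is the anisotropic estimate of $\int\Phi(v)\partial_z\tilde v\,\hat v$: one must route the $\partial_x$ inside $\Phi(v)$ into a full $\|v\|$ (rather than a 2D $|v|_4$-type factor) while leaving only the $H$-norm $|\partial_z\tilde v|$ on the middle slot, and still extract the interpolated factor $|\hat v|^{1/2}\|\hat v\|^{1/2}$. This forces a careful ordering of the Minkowski and Hölder steps in $(x,z)$ and a genuine use of the mean-zero constraint $\int_{-h}^0 v\,dz=0$, which is the feature that replaces the usual divergence-free condition of Navier–Stokes.
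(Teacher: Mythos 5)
Your proposal is correct: the cancellation via integration by parts using $\partial_x v+\partial_z\Phi(v)=0$ and the vanishing of $\Phi(v)$ at $z=0,-h$, the Ladyzhenskaya bound for the $v\partial_x\tilde v$ term, the anisotropic estimate $\sup_x|\hat v(x,\cdot)|_{L^2_z}^2\le C|\hat v|\,\|\hat v\|$ combined with $|\Phi(v)(x,\cdot)|_{L^\infty_z}\le h^{1/2}|\partial_x v(x,\cdot)|_{L^2_z}$ for the $\Phi(v)\partial_z\tilde v$ term, and the bilinear splitting plus cancellation for (\ref{ee-4}) all go through and reproduce the stated exponents. The paper itself gives no proof and simply cites Glatt-Holtz and Ziane; your argument is essentially the standard one established there, so there is nothing to compare beyond noting that your route is the expected one.
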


\begin{remark}
The above estimates of nonlinear terms are of higher order than 2D Navier-Stokes equations in \cite{B-B}, which results in difficulties stated in the introduction.
\end{remark}

For the stochastic forcing, we fix a single stochastic basis $\mathcal{T}:=(\Omega, \mathcal{F}, \{\mathcal{F}_t\}_{t\geq 0}, \mathbb{P}, W)$ with the expectation $\mathbb{E}$. Here, $W$
 is a cylindrical Wiener process with the form $W(t,\omega)=\sum_{i\geq1}r_iw_i(t,\omega)$, where $\{r_i\}_{i\geq 1}$ is a complete orthonormal basis of a Hilbert space
$U$, $\{w_i\}_{i\geq1}$ is a sequence of independent one-dimensional standard Brownian motions on $(\Omega, \mathcal{F}, \{\mathcal{F}_t\}_{t\geq 0}, \mathbb{P})$.

Set
\[
F(t,v(t))=Av(t)+B(v(t),v(t)),
\]
using the above functionals, we obtain
\begin{eqnarray}\label{equ-7}
\left\{
  \begin{array}{ll}
    dv(t)+F(t,v(t))dt=\psi(t,v(t)) dW(t), \\
    v(0)=v_0.
  \end{array}
\right.
\end{eqnarray}

\section{Global well-posedness}\label{s-1}
In this part, we aim to obtain a priori estimates of the strong solution of (\ref{equ-7}). Firstly, we introduce the following definition stated in \cite{Ziane}.
\begin{dfn}\label{dfn-3}
Let $\mathcal{T}=(\Omega, \mathcal{F}, \{\mathcal{F}_t\}_{t\geq 0}, \mathbb{P}, W)$ be a fixed stochastic basis, $T>0$ and $p\geq 2$. Assume the initial data $v_0\in L^p(\Omega;H)$ and is $\mathcal{F}_0-$measurable.
An $\mathcal{F}_t-$predictable stochasitc process  $v(t, \omega)$ is called a strong solution of (\ref{equ-7}) on $[0,T]$ with the initial value $v_0$ if
\[
v\in C([0,T];H)\quad P-a.s. \quad v\in L^p(\Omega; C([0,T];H))\bigcap L^p(\Omega; L^2([0,T];V)),
\]
and satisfies
\[
 (v(t),\phi)-(v_0,\phi)+\int^{t}_{0}\Big[\langle v(s), A \phi\rangle+\langle B(v,v), \phi\rangle \Big]ds=\int^{t}_{0}(\psi(s,v(s)) dW(s),\phi), \quad P-a.s.
 \]
for all $\phi\in D(A)$.
\end{dfn}
In order to obtain the global well-posedness of (\ref{equ-7}), we need the following Hypotheses:
\begin{flushleft}
\begin{description}
  \item[\textbf{Hypothesis A:}] $\psi$ is a continuous mapping, $\psi: [0,T]\times V\rightarrow \mathcal{L}_2({U}; H)$ (resp. $\psi: [0,T]\times H\rightarrow \mathcal{L}_2({U}; H)$ for $\varepsilon=0$) satisfies that there exist positive constants $K_i, i=0, \cdot\cdot\cdot, 4$, such that for $t\in[0,T]$, $0\leq\varepsilon<1$,
\begin{description}
  \item[(A.1)] $\|\psi(t, \phi)\|^2_{\mathcal{L}_2({U}; H)}\leq K_0+K_1|\phi|^2+\varepsilon K_2\|\phi\|^2, \quad \phi\in V$;
  \item[(A.2)] $\|\psi(t, \phi_1)-\psi(t,\phi_2)\|^2_{\mathcal{L}_2({U}; H)}\leq K_3|\phi_1-\phi_2|^2+\varepsilon K_4\|\phi_1-\phi_2\|^2, \quad  \phi_1, \phi_2 \in V$.
\end{description}
\end{description}
\textbf{Hypothesis B:} There exist constants $L_i, i=0,\cdot\cdot\cdot,2$, such that for $t\in[0,T]$, $0\leq\varepsilon<1$,
\begin{eqnarray*}
 \|\partial_z \psi(t, \phi)\|^2_{\mathcal{L}_2({U}; H)}&\leq& L_0+L_1|\partial_z\phi|^2+\varepsilon L_2\|\partial_z\phi\|^2, \quad \partial_z\phi\in V.
\end{eqnarray*}
\end{flushleft}


\begin{thm}\label{thm-3}
Assume $v_0\in \mathcal{H}$, \textbf{Hypotheses A, B} hold with $K_2\leq K_4<2$ and $L_2<2$, there exists a unique global solution $v$ of (\ref{equ-7}) in the sense of Definition \ref{dfn-3} with $v(0)=v_0$. Furthermore, if $q\in [2, \frac{1}{37}+\frac{2}{37K_2})$, there exists a constant $C=C(\varepsilon, q,K_0,K_1,K_2,T)$ such that
\begin{eqnarray}\label{ee-5}
\mathbb{E}\Big(\sup_{0\leq s\leq T}|v(s)|^q+\int^T_0\|v(s)\|^2|v(s)|^{q-2}ds\Big)\leq C(1+\mathbb{E}|v_0|^q).
\end{eqnarray}
If $K_2< \frac{2}{147}$, then
\begin{eqnarray}\label{qq-0}
\mathbb{E}\int^T_0|v(s)|^2\|v(s)\|^2ds\leq C(1+\mathbb{E}|v_0|^4).
\end{eqnarray}
Similarly, if $q\in [2, \frac{1}{37}+\frac{2}{37L_2})$, there exists a constant $C=C(\varepsilon, q,R_0,R_1,R_2,T)$ such that
\begin{eqnarray*}
\mathbb{E}\Big(\sup_{0\leq s\leq T}|\partial_zv(s)|^q+\int^T_0\|\partial_zv(s)\|^2|\partial_zv(s)|^{q-2}ds\Big)\leq C(1+\mathbb{E}|\partial_zv_0|^q).
\end{eqnarray*}
In particular, if $L_2< \frac{2}{147}$, we have
\begin{eqnarray*}
\mathbb{E}\Big(\sup_{0\leq s\leq T}|\partial_zv(s)|^4+\int^T_0|\partial_zv(s)|^2\|\partial_zv(s)\|^2ds\Big)\leq C(1+\mathbb{E}|\partial_zv_0|^4).
\end{eqnarray*}
\end{thm}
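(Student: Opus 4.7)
The strategy is a Galerkin approximation combined with an It\^{o}-formula analysis of $|v|^q$. Let $\{e_k\}_{k\geq 1}$ be an eigenbasis of $A$ in $H$ and let $P_N$ denote the projection onto the space spanned by $e_1,\dots,e_N$. I would first solve the Galerkin truncation
\begin{align*}
dv^N + P_N F(t,v^N)\,dt = P_N \psi(t,v^N)\,dW,
\end{align*}
a finite-dimensional SDE with locally Lipschitz coefficients whose non-explosion is guaranteed by the a priori estimates below. Existence of a strong solution of (\ref{equ-7}) then follows from tightness of $\{v^N\}$ in suitable path spaces together with a Skorokhod-type representation, while pathwise uniqueness is obtained by applying It\^{o} to $|v_1-v_2|^2$, using (\ref{ee-4}), and a Gronwall argument with the weight $\exp\bigl(-C\int_0^t \|v_1(s)\|^2\,ds\bigr)$.

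For the moment estimate (\ref{ee-5}), I would apply It\^{o}'s formula to $|v^N|^q$. Since $\langle F(t,v^N), v^N\rangle = \|v^N\|^2$ by the cancellation $b(v,v,v)=0$ from Lemma \ref{lemma-1}, one obtains
\begin{align*}
d|v^N|^q + q|v^N|^{q-2}\|v^N\|^2\,dt &= q|v^N|^{q-2}(v^N, \psi(t,v^N)\,dW) \\
&\quad + \tfrac{q}{2}|v^N|^{q-2}\|\psi(t,v^N)\|_{\mathcal{L}_2(U;H)}^2\,dt \\
&\quad + \tfrac{q(q-2)}{2}|v^N|^{q-4}\sum_i (v^N,\psi(t,v^N) r_i)^2\,dt.
\end{align*}
The last two It\^{o}-correction terms are bounded by $\tfrac{q(q-1)}{2}|v^N|^{q-2}\|\psi\|_{\mathcal{L}_2}^2$, so Hypothesis A.1 leaves a net dissipation with coefficient $q\bigl[1-\tfrac{(q-1)\varepsilon K_2}{2}\bigr]$. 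After localizing by a stopping time $\tau_R$, taking $\sup_{t\leq T\wedge\tau_R}$ and expectation, and controlling the stochastic integral via BDG followed by Young, one produces an additional residual of the same algebraic form $Cq^2\varepsilon K_2\, |v^N|^{q-2}\|v^N\|^2$. The interval $q\in[2,\tfrac{1}{37}+\tfrac{2}{37K_2})$ is precisely what makes the combined coefficient $q[1-\tfrac{(q-1)\varepsilon K_2}{2}] - Cq^2\varepsilon K_2$ strictly positive, so the right-hand side $\|v^N\|^2$-term can be absorbed into the left; a standard Gronwall then yields uniform-in-$N$ bounds, and passing to the limit by Fatou and sending $R\to\infty$ delivers (\ref{ee-5}). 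The estimate (\ref{qq-0}) is simply the case $q=4$, which lies in the admissible range iff $K_2<2/147$.

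For the vertical derivative, I would differentiate (\ref{eq5-1}) in $z$. Using $\partial_z\Phi(v) = -\partial_x v$, the two derivative commutators cancel exactly and yield
\begin{align*}
\partial_z\bigl[v\,\partial_x v + \Phi(v)\,\partial_z v\bigr] = v\,\partial_x(\partial_z v) + \Phi(v)\,\partial_z(\partial_z v).
\end{align*}
Hence $w:=\partial_z v$ solves a transport--diffusion equation of exactly the same algebraic form as (\ref{equ-7}) but \emph{linear} in $w$, driven by $\partial_z\psi(t,v)\,dW$, with the cancellation $\int_{\mathcal{M}}[v\,\partial_x w + \Phi(v)\,\partial_z w]\,w\,dxdz=0$ inherited from (\ref{ee-2}). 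The It\^{o} analysis of $|w|^q$ is then parallel to that of $|v|^q$, with Hypothesis B and the constant $L_2$ replacing Hypothesis A and $K_2$; this yields the stated estimates for $\partial_z v$, and again the $|\partial_z v|^4$ bound is just the case $q=4$, admissible iff $L_2<2/147$.

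The main obstacle is the bookkeeping of absolute constants that yields the tight interval for $q$: one must simultaneously track the It\^{o}-correction $\tfrac{q(q-1)}{2}\varepsilon K_2 \|v\|^2|v|^{q-2}$ and the BDG-residual of the same form with a different numerical constant, and the specific value $37$ arises from an optimized choice of splitting parameter in Young's inequality. Once this arithmetic is pinned down, everything else is the standard Galerkin--It\^{o} machinery adapted to the primitive equations, the crucial structural input being the cancellation property in Lemma \ref{lemma-1}, which makes the nonlinearity $B$ disappear at the $L^2$ energy level despite the higher-order nonlinear bound (\ref{ee-3}).
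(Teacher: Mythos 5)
Your proposal follows essentially the same route as the paper: the well-posedness itself is delegated to the Glatt-Holtz--Ziane framework (which is the Galerkin construction you sketch), and the moment bounds are obtained exactly as you describe, by applying It\^{o}'s formula to $|v|^q$, killing the nonlinearity via $b(v,v,v)=0$, bounding the It\^{o} correction by $\tfrac{q(q-1)}{2}|v|^{q-2}\|\psi\|^2_{\mathcal{L}_2(U;H)}$, controlling the martingale supremum by Burkholder-Davis-Gundy plus Young (producing the residual $18q^2\varepsilon K_2|v|^{q-2}\|v\|^2$ whose absorption forces $q<\tfrac{1}{37}+\tfrac{2}{37K_2}$, hence $K_2<\tfrac{2}{147}$ for $q=4$), and repeating the argument for $r=\partial_z v$, whose equation has exactly the commutator cancellation you compute. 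The only detail worth tightening is the pathwise-uniqueness weight: because of the second term in (\ref{ee-4}) the exponential weight must also contain $\int_0^t|\partial_z v_1(s)|^4\,ds$, not just $\int_0^t\|v_1(s)\|^2\,ds$.
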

\begin{proof}
 When $K_2\leq K_4<2$ and $L_2<2$, the global well-posedness of strong solution to (\ref{equ-7}) in the sense of Definition \ref{dfn-3} has been proved by \cite{Ziane}, we omit it. Let $v$ be the strong solution of (\ref{equ-7}).
For any $q\geq 2$, applying It\^{o} formula to $|v(t)|^q$, we have
\begin{eqnarray*}
&&d|v(t)|^q+q|v(t)|^{q-2}\|v(t)\|^2dt\\
&=& -q|v(t)|^{q-2}\langle v(t), B(v(t),v(t))\rangle dt\\
&&\
+q|v(t)|^{q-2}\langle v(t), \psi(t,v(t)) dW(t)\rangle  +\frac{q(q-1)}{2}|v(t)|^{q-2}\|\psi(t,v(t))\|^2_{\mathcal{L}_2(U;H)}dt.
\end{eqnarray*}
Using (\ref{ee-2}), we obtain
\begin{eqnarray}\notag
&&d|v(t)|^q+q|v(t)|^{q-2}\|v(t)\|^2dt\\
\label{eee-9}
&\leq&
q|v(t)|^{q-2}\langle v(t), \psi(t,v(t)) dW(t)\rangle +\frac{q(q-1)}{2}|v(t)|^{q-2}\|\psi(t,v(t))\|^2_{\mathcal{L}_2(U;H)}dt.
\end{eqnarray}
Then,
\begin{eqnarray*}
&&\mathbb{E}\sup_{t\in[0,T]}|v(t)|^q+q\mathbb{E}\int^T_0|v(t)|^{q-2}\|v(t)\|^2dt\\
&\leq&q\mathbb{E}\sup_{t\in[0,T]}\int^t_0|v(s)|^{q-2}\langle v(s), \psi(s,v(s)) dW(s)\rangle \\
&&\ +\frac{q(q-1)}{2}\mathbb{E}\int^T_0|v(t)|^{q-2}\|\psi(t,v(t))\|^2_{\mathcal{L}_2(U;H)}dt\\
&:=& I_1+I_2.
\end{eqnarray*}
With the help of \textbf{Hypothesis A}, we get
\begin{eqnarray*}
I_2
&\leq& \frac{q(q-1)}{2}K_0\mathbb{E}\int^T_0|v(t)|^{q-2}dt+\frac{q(q-1)}{2}K_1\mathbb{E}\int^T_0\sup_{s\in [0,t]}|v(s)|^{q}dt\\
&&\
+\varepsilon\frac{q(q-1)}{2} K_2\mathbb{E}\int^T_0|v(t)|^{q-2}\|v(t)\|^2dt.
\end{eqnarray*}
Utilizing the Burkholder-Davies-Gundy inequality and \textbf{Hypothesis A}, we have
 \begin{eqnarray*}
I_1
&\leq& 6q \mathbb{E}\left(\int^T_0|v(t)|^{2(q-2)}|v(t)|^{2}\|\psi(t,v(t))\|^2_{\mathcal{L}_2(U;H)}dt\right)^{\frac{1}{2}}\\
&\leq& 6q \mathbb{E}\left(\int^T_0|v(t)|^{2q-2}(K_0+K_1|v(t)|^2+\varepsilon K_2\|v(t)\|^2)dt\right)^{\frac{1}{2}}\\
&\leq& 6q \mathbb{E}\left(K_0 \int^T_0|v(t)|^{2q-2}dt\right)^{\frac{1}{2}}+6q \mathbb{E}\left(K_1\int^T_0|v(t)|^{2q}dt\right)^{\frac{1}{2}}\\
&&\ +6q\mathbb{E}\left( \varepsilon K_2\int^T_0|v(t)|^{2q-2}\|v(t)\|^2dt\right)^{\frac{1}{2}}\\
&:=& I^1_1+I^2_1+ I^3_1.
\end{eqnarray*}
Using the Young inequality, it gives
\begin{eqnarray*}
I^1_1&\leq& 6q K^{\frac{1}{2}}_0\mathbb{E}\left( \sup_{t\in [0,T]}|v(t)|^{q}\int^T_0|v(t)|^{q-2}dt\right)^{\frac{1}{2}}\\
&\leq& 6q K^{\frac{1}{2}}_0\mathbb{E}\left[ \sup_{t\in [0,T]}|v(t)|^{\frac{q}{2}}\left(\int^T_0|v(t)|^{q-2}dt\right)^{\frac{1}{2}}\right]\\
&\leq& \frac{1}{6}\mathbb{E}( \sup_{t\in [0,T]}|v(t)|^{q})+ 18q^2K_0 \mathbb{E}\int^T_0\sup_{s\in[0,t]}|v(s)|^{q}dt+18q^2K_0T.
\end{eqnarray*}
$I^2_1$ can be bounded as
\begin{eqnarray*}
I^2_1&\leq& 6q K^{\frac{1}{2}}_1\mathbb{E}\left(\int^T_0|v(t)|^{2q}dt\right)^{\frac{1}{2}}\\
&\leq& \frac{1}{6}\mathbb{E}( \sup_{t\in [0,T]}|v(t)|^{q})+ 18q^2K_1 \mathbb{E}\int^T_0\sup_{s\in[0,t]}|v(s)|^{q}dt.
\end{eqnarray*}
By the Young inequality, we have
\begin{eqnarray*}
I^3_1&\leq& 6q \varepsilon^{\frac{1}{2}} K^{\frac{1}{2}}_2\mathbb{E}\left( \sup_{t\in [0,T]}|v(t)|^q\int^T_0|v(t)|^{q-2}\|v(t)\|^2dt\right)^{\frac{1}{2}}\\
&\leq& \frac{1}{6}\mathbb{E}( \sup_{t\in [0,T]}|v(t)|^{q})+ 18q^2\varepsilon K_2 \mathbb{E}\int^T_0|v(t)|^{q-2}\|v(t)\|^2dt.
\end{eqnarray*}
Based on the above inequalities, we have
\begin{eqnarray*}
I_1
&\leq& \frac{1}{2}\mathbb{E}( \sup_{t\in [0,T]}|v(t)|^{q})+ 18q^2(K_0 +K_1) \mathbb{E}\int^T_0\sup_{s\in[0,t]}|v(s)|^{q}dt\\
&&\ +
18q^2\varepsilon K_2 \mathbb{E}\int^T_0|v(t)|^{q-2}\|v(t)\|^2dt+18q^2K_0T.
\end{eqnarray*}
Collecting the above estimates, we conclude that
\begin{eqnarray}\notag
&&\mathbb{E}\sup_{t\in[0,T]}|v(t)|^q+2\left(q-\varepsilon\frac{q(q-1)}{2}K_2-18q^2\varepsilon K_2\right)\mathbb{E}\int^T_0|v(t)|^{q-2}\|v(t)\|^2dt\\ \notag
&\leq&2\left(\frac{q(q-1)}{2}K_0+\frac{q(q-1)}{2}K_1+ 18q^2(K_0 +K_1)\right) \mathbb{E}\int^T_0\sup_{s\in[0,t]}|v(s)|^{q}dt\\
\label{r-1}
&&\ +q(q-1)K_0 T
+36q^2K_0T.
\end{eqnarray}
When $q\in [2, \frac{1}{37}+\frac{2}{37K_2})$, we have
\[
q-\varepsilon\frac{q(q-1)}{2}K_2-18q^2\varepsilon K_2>0.
\]
Applying Gronwall inequality to (\ref{r-1}), we obtain
\begin{eqnarray}\label{r-2}
\mathbb{E}\sup_{t\in[0,T]}|v(t)|^q\leq C(q,K_0,K_1,K_2,T)(1+\mathbb{E}|v_0|^q).
\end{eqnarray}
Combining (\ref{r-1}) and (\ref{r-2}), we get
\begin{eqnarray}\label{r-3}
\mathbb{E}\sup_{t\in[0,T]}|v(t)|^q+\mathbb{E}\int^T_0|v(t)|^{q-2}\|v(t)\|^2dt\leq C(q,K_0,K_1,K_2,T)(1+\mathbb{E}|v_0|^q).
\end{eqnarray}
Let $r=\partial_z v$. From (\ref{equ-7}), we have
\begin{eqnarray}\label{eee-7}
dr+Ar dt+(v\partial_x r+\Phi(v)\partial_z r)dt=\partial_z \psi(t, v(t))dW(t).
\end{eqnarray}
Applying It\^{o} formula to (\ref{eee-7}), we obtain
\begin{eqnarray*}
&&d|r(t)|^q+q|r(t)|^{q-2}\|r(t)\|^2dt\\
&=& -q|r(t)|^{q-2}\langle r(t),(v\partial_x r+\Phi(v)\partial_z r) \rangle dt\\
&&\
+q|r(t)|^{q-2}\langle r(t), \partial_z\psi(t,v(t)) dW(t)\rangle  +\frac{q(q-1)}{2}|r(t)|^{q-2}\|\partial_z\psi(t,v(t))\|^2_{\mathcal{L}_2(U;H)}dt.
\end{eqnarray*}
We deduce from (\ref{ee-2}) that
\begin{eqnarray}\notag
&&d|r(t)|^q+q|r(t)|^{q-2}\|r(t)\|^2dt\\
\label{eee-8}
&=& q|r(t)|^{q-2}\langle r(t), \partial_z\psi(t,v(t)) dW(t)\rangle  +\frac{q(q-1)}{2}|r(t)|^{q-2}\|\partial_z\psi(t,v(t))\|^2_{\mathcal{L}_2(U;H)}dt.
\end{eqnarray}
Note that (\ref{eee-8}) is similar to (\ref{eee-9}). Following the same process exactly as above, we conclude the result.

\end{proof}
\begin{remark}
For (\ref{equ-7}) with $v_0\in L^p(\Omega;V)$, we have no uniform $V-$norm estimates $\mathbb{E}\sup_{t\in[0,T]}\|v(t)\|^2\leq C$.
\end{remark}

\section{Splitting up method}
Let $\prod^n=\{0=t_0<t_1<\cdot\cdot\cdot<t_n=T\}$ be a finite partition of a given interval $[0,T]$ with a constant mesh $h=\frac{T}{n}$. Let $\varepsilon \in [0,1)$ and let $F_\varepsilon:[0,T]\times V\rightarrow V'$ be defined by
\[
F_\varepsilon(t,v)=(1-\varepsilon)Av +B(v,v).
\]
It's easy to know $F_0(t,v)=F(t,v)$.

Set $t_{-1}=-\frac{T}{n}$. For $t\in [t_{-1}, 0)$, define
\[
v^n(t)=\eta^n(t)=v_0,\quad  \mathcal{F}_t=\mathcal{F}_0.
\]
The scheme is defined by induction as follows. Suppose we have defined processes $v^n(t)$ and $\eta^n(t)$ for $t\in [t_{i-1}, t_i)$, $i=0,\cdot\cdot\cdot, n-1$, such that $\eta^n(t^{-}_i)$ is an $H-$valued $\mathcal{F}_{t_i}-$measurable function. This clearly holds for $i=0$. Then we define $v^n(t), t\in[t_i,t_{i+1})$ as the unique solution of the (deterministic) problem with positive viscosity $1-\varepsilon$ and with initial condition $\eta^n(t^{-}_i)$ at time $t_i$, that is,
\begin{eqnarray}\label{qq-1}
\left\{
  \begin{array}{ll}
    \frac{d v^n(t)}{dt}+F_\varepsilon(t, v^n(t))=0, & t\in [t_i, t_{i+1}), \\
    v^n(t_i)=v^n(t^+_i)=\eta^n(t^-_i), &
  \end{array}
\right.
\end{eqnarray}
Note that $v^n(t^-_{i+1})$ is a well-defined $H-$valued $\mathcal{F}_{t_i}-$measurable random variable.
Then we can define $\eta^n(t), t\in [t_i, t_{i+1})$ as the unique solution of the random problem with initial condition $v^n(t^-_{i+1})$ at time $t_i$:
\begin{eqnarray}\label{qq-2}
\left\{
  \begin{array}{ll}
   d \eta^n(t)+\varepsilon A \eta^n(t)dt=\psi(t, \eta^n(t))dW(t), & t\in [t_i, t_{i+1}), \\
    \eta^n(t_i)=\eta^n(t^+_i)=v^n(t^-_{i+1}), &
  \end{array}
\right.
\end{eqnarray}
We claim that $\eta^n(t^-_{i+1})$ defined above is a well-defined  $H-$valued $\mathcal{F}_{t_{i+1}}-$measurable random variable. In fact, when $\varepsilon>0$,  it's classical that (\ref{qq-2}) has a unique weak solution provided the stochastic parabolic condition holds ($ K_2$, $K_4$, $L_2$  are small enough). When $\varepsilon=0$, the smoothing effect of $A$ does not act anymore, but $\psi$ satisfies the usual growth and Lipschitz conditions for the $H-$norm. Finally, let $v^n(T^+)=\eta^n(T^-)$.

\begin{remark}
As stated in \cite{B-B}, $v^n$ and $\eta^n$ constructed above are not continuous, only right continuous.
\end{remark}
In order to prove the convergence of the above scheme, we will need to establish a priori estimates on $v^n$ and $\eta^n$. Firstly, we introduce some notations. Recall that $\prod^n=\{0=t_0<t_1<\cdot\cdot\cdot<t_n=T\}$. Set
\begin{eqnarray}\label{qq-3}
\left\{
  \begin{array}{ll}
  d_n(t):=t_i,\ d^*_n(t):= t_{i+1} , & {\rm{for}}\ t\in [t_i, t_{i+1}), \quad i=0,1, \cdot\cdot\cdot, n-2, \\
   d_n(t):=t_{n-1},\ d^*_n(t):= t_n , & {\rm{for}}\ t\in [t_{n-1}, t_n].
  \end{array}
\right.
\end{eqnarray}
Then, the processes $v^n(t), \eta^n(t)$ can be rewritten in a way close to the continuous equation:
\begin{eqnarray}\label{qq-4}
v^n(t)=v_0-\int^t_0 F_\varepsilon(s, v^n(s))ds+\int^{d_n(t)}_0[-\varepsilon A\eta^n(s)ds+\psi(s,\eta^n(s))dW(s)],\\
\label{qq-5}
\eta^n(t)=v_0-\int^{d^*_n(t)}_0 F_\varepsilon(s, v^n(s))ds+\int^{t}_0[-\varepsilon A\eta^n(s)ds+\psi(s,\eta^n(s))dW(s)].
\end{eqnarray}

\par
In the following, we aim to establish both $H-$norm and $V-$norm of the difference between $v^n$ and $\eta^n$.
\subsection{$H-$norm of the difference between $v^n$ and $\eta^n$}
Firstly, we need to obtain a priori estimates on $v^n$ and $\eta^n$.
\begin{lemma}\label{lem-3}
Let $v_0\in \mathcal{H}$. Fix $\varepsilon\in [0,1)$. Let $\textbf{Hypotheses A, B}$ hold with $K_2\leq K_4< 2$ and $L_2<2$. Then there exists a positive constant $C=C(\varepsilon,T,\mathbb{E}|v_0|^2,K_i, L_i)$ such that for every integer $n\geq 1$,
\begin{eqnarray}\label{qq-6}
\sup_{t\in[0,T]}\mathbb{E}\Big(|\eta^n(t)|^2+\sup_{s\in[d_n(t),d^*_n(t))}|v^n(s)|^2\Big)+\mathbb{E}\int^T_0\|v^n(s)\|^2ds\leq C.
\end{eqnarray}
Moreover, if $\varepsilon\in (0,1)$, there exists a constant C such that
\begin{eqnarray}\label{qq-7}
\sup_{n}\mathbb{E}\int^T_0\|\eta^n(s)\|^2ds\leq C.
\end{eqnarray}
\end{lemma}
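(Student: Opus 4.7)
The plan is to exploit the splitting structure and bootstrap energy estimates through the sub-intervals $[t_i,t_{i+1})$ by alternating between the deterministic and stochastic steps. On the deterministic step \eqref{qq-1}, take the $H$-inner product with $v^n(t)$; since $b(v^n,v^n,v^n)=0$ by Lemma \ref{lemma-1}, this yields the pathwise identity
\begin{equation*}
\tfrac{1}{2}\tfrac{d}{dt}|v^n(t)|^2 + (1-\varepsilon)\|v^n(t)\|^2 = 0,\qquad t\in[t_i,t_{i+1}),
\end{equation*}
so $|v^n(t)|^2$ is nonincreasing on each sub-interval. Integrating from $t_i$ to $t_{i+1}$ gives
\begin{equation*}
|v^n(t_{i+1}^{-})|^2 + 2(1-\varepsilon)\int_{t_i}^{t_{i+1}}\|v^n(s)\|^2\,ds = |\eta^n(t_i^{-})|^2,
\end{equation*}
and in particular $\sup_{s\in[t_i,t_{i+1})}|v^n(s)|=|\eta^n(t_i^{-})|$, which already handles the supremum over the sub-interval in \eqref{qq-6} once we control $\mathbb{E}|\eta^n(t_i^{-})|^2$.

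For the stochastic step \eqref{qq-2}, I would apply It\^o's formula to $|\eta^n|^2$, use the cancellation $\langle A\eta^n,\eta^n\rangle=\|\eta^n\|^2$, take expectation (the martingale term vanishes), and apply Hypothesis A.1 to get
\begin{equation*}
\mathbb{E}|\eta^n(t)|^2 + 2\varepsilon\mathbb{E}\int_{t_i}^{t}\|\eta^n(s)\|^2\,ds \le \mathbb{E}|v^n(t_{i+1}^{-})|^2 + K_0(t-t_i) + K_1\mathbb{E}\int_{t_i}^{t}|\eta^n(s)|^2\,ds + \varepsilon K_2\mathbb{E}\int_{t_i}^{t}\|\eta^n(s)\|^2\,ds,
\end{equation*}
for $t\in[t_i,t_{i+1})$. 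Since $K_2<2$, the coefficient $\varepsilon(2-K_2)\ge 0$ and the $\|\eta^n\|^2$ term can be absorbed on the left. Plugging in the deterministic identity at the endpoint $t=t_{i+1}^{-}$ yields the telescoping-type inequality
\begin{equation*}
\mathbb{E}|\eta^n(t_{i+1}^{-})|^2 + 2(1-\varepsilon)\mathbb{E}\!\int_{t_i}^{t_{i+1}}\!\!\|v^n\|^2 ds + \varepsilon(2-K_2)\mathbb{E}\!\int_{t_i}^{t_{i+1}}\!\!\|\eta^n\|^2 ds \le \mathbb{E}|\eta^n(t_i^{-})|^2 + K_0 h + K_1\mathbb{E}\!\int_{t_i}^{t_{i+1}}\!\!|\eta^n|^2 ds.
\end{equation*}

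From here a discrete Gronwall argument (summing over $i=0,\dots,k-1$, then a standard continuous Gronwall on the function $t\mapsto \mathbb{E}|\eta^n(t)|^2$ applied piecewise using monotonicity of $|v^n|^2$ on each sub-interval to dominate $\mathbb{E}|\eta^n|^2$ by its value at the left endpoint plus a stochastic-step correction) produces a bound independent of $n$, which gives both the sup bound in \eqref{qq-6} and, by summing the $\|v^n\|^2$ integrals over $i$, the $L^2([0,T];V)$ bound for $v^n$. The claim \eqref{qq-7} for $\varepsilon>0$ then follows from the same telescoping, since the $\varepsilon(2-K_2)\int\|\eta^n\|^2$ term on the left has a strictly positive coefficient.

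The main obstacle I expect is handling the boundary value $\mathbb{E}|v^n(t_{i+1}^{-})|^2$ rigorously: one must observe that in the deterministic step the viscosity coefficient is $1-\varepsilon$ (not $1$), and that the V-norm integral it produces has the right sign so that the combined left-hand side stays coercive for all $\varepsilon\in[0,1)$. A secondary nuisance is the edge case $\varepsilon=0$, where we lose the smoothing for $\eta^n$ in \eqref{qq-2}; here the It\^o estimate still produces the recursion because the $\varepsilon K_2\|\eta^n\|^2$ term simply vanishes, but one must verify that the stochastic parabolic equation for $\eta^n$ is well-posed in $H$ using only the $H$-Lipschitz part of Hypothesis A, as noted after \eqref{qq-2}.
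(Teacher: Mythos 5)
Your proposal is correct and follows essentially the same route as the paper: an energy identity on the deterministic step using the cancellation $b(v^n,v^n,v^n)=0$, an It\^o estimate on the stochastic step with the $\varepsilon K_2\|\eta^n\|^2$ term absorbed thanks to $K_2<2$, and then a discrete Gronwall/induction over the sub-intervals followed by summation to recover the $V$-norm integrals. The only cosmetic difference is that you observe the monotonicity of $|v^n|$ on each sub-interval directly, whereas the paper states the same fact as the inequality $\mathbb{E}(\sup_{t_i\le t<t_{i+1}}|v^n(t)|^2)\le\mathbb{E}|\eta^n(t_i^-)|^2$.
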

\begin{proof}
Taking the scalar product of (\ref{qq-1}) by $v^n$ and integrating over $(t_i, t]$ for $t\in[t_i, t_{i+1})$, we have
\begin{eqnarray*}
|v^n(t)|^2+2(1-\varepsilon)\int^t_{t_i}\|v^n(s)\|^2ds=|\eta^n(t^-_i)|^2
-2\int^t_{t_i}\langle B(v^n(s), v^n(s)), v^n(s)\rangle ds.
\end{eqnarray*}
By (\ref{ee-2}), we obtain
\begin{eqnarray}\label{qq-10}
|v^n(t)|^2+2(1-\varepsilon)\int^t_{t_i}\|v^n(s)\|^2ds\leq|\eta^n(t^-_i)|^2.
\end{eqnarray}
Taking expectation of (\ref{qq-10}), we get
\begin{eqnarray}\label{qq-11}
\mathbb{E}(\sup_{t_i\leq t<t_{i+1}}|v^n(t)|^2)\leq \mathbb{E}|\eta^n(t^-_i)|^2.
\end{eqnarray}
Applying It\^{o} formula to (\ref{qq-2}) and by $\textbf{Hypothesis A}$, it yields that for $t\in [t_i, t_{i+1})$,
\begin{eqnarray*}
\mathbb{E}|\eta^n(t)|^2+2\varepsilon \mathbb{E}\int^t_{t_i}\|\eta^n(s)\|^2ds&=& \mathbb{E}|v^n(t^-_{i+1})|^2+\mathbb{E}\int^t_{t_i}\|\psi(s, \eta^n(s))\|^2_{\mathcal{L}_2(U;H)}ds\\
&\leq& \mathbb{E}|v^n(t^-_{i+1})|^2+\mathbb{E}\int^t_{t_i}(K_0+K_1|\eta^n(s)|^2+\varepsilon K_2\|\eta^n(s)\|^2)ds.
\end{eqnarray*}
Then
\begin{eqnarray}\label{qq-12}
\mathbb{E}|\eta^n(t)|^2+\varepsilon(2-K_2)\mathbb{E} \int^t_{t_i}\|\eta^n(s)\|^2ds
\leq \mathbb{E}|v^n(t^-_{i+1})|^2+ \frac{K_0 T}{n}+K_1\int^t_{t_i}\mathbb{E}|\eta^n(s)|^2ds.
\end{eqnarray}
Since $K_2<2$, we can neglect the integral of $V-$norm in (\ref{qq-12}) to obtain
\begin{eqnarray}\label{qq-13}
\sup_{t_i\leq t<t_{i+1}}\mathbb{E}|\eta^n(t)|^2
\leq (\mathbb{E}|v^n(t^-_{i+1})|^2+ \frac{K_0 T}{n}){\rm{e}}^{\frac{K_1 T}{n}}.
\end{eqnarray}
Putting (\ref{qq-11}) to (\ref{qq-13}), it gives
\begin{eqnarray}\label{qq-14}
\sup_{t_i\leq t<t_{i+1}}\mathbb{E}|\eta^n(t)|^2
\leq (\mathbb{E}|\eta^n(t^-_i)|^2+ \frac{K_0 T}{n}){\rm{e}}^{\frac{K_1 T}{n}}.
\end{eqnarray}
Set
\[
\tilde{r}_1:=K_1,\quad \tilde{r}_2:=K_0,
\]
then, by a mathematical induction argument, we infer that for $i=0, \cdot\cdot\cdot, n-1$,
\begin{eqnarray*}
\mathbb{E}(\sup_{t_i\leq t<t_{i+1}}|v^n(t)|^2)\vee (\sup_{t_i\leq t<t_{i+1}}\mathbb{E}|\eta^n(t)|^2)
\leq \mathbb{E}|v_0|^2 {\rm{e}}^{(i+1)\frac{\tilde{r}_1 T}{n}}+\frac{\tilde{r}_2 T}{n}\sum^{i+1}_{j=1}{\rm{e}}^{j \frac{\tilde{r}_1T}{n}}.
\end{eqnarray*}
Hence, we deduce that
\begin{eqnarray}\notag
\Big[\sup_{t\in[0,T]}\mathbb{E}(\sup_{d_n(t)\leq s<d^*_n(t)}|v^n(s)|^2)\Big]\vee \Big[\sup_{t\in[0,T]}\mathbb{E}|\eta^n(t)|^2\Big]
&\leq& \mathbb{E}|v_0|^2 {\rm{e}}^{\tilde{r}_1T}+\frac{\tilde{r}_2 T}{n}\sum^{n}_{j=1}{\rm{e}}^{j \frac{\tilde{r}_1T}{n}}\\
\label{qq-15}
&\leq& \mathbb{E}|v_0|^2 {\rm{e}}^{\tilde{r}_1T}+\frac{\tilde{r}_2}{\tilde{r}_1} {\rm{e}}^{2\tilde{r}_1T},
\end{eqnarray}
which proves part of (\ref{qq-6}). Moreover, from (\ref{qq-10}), (\ref{qq-12}), and using (\ref{qq-15}), we obtain that for every $i=0, \cdot\cdot\cdot, n-1$,
\begin{eqnarray*}
\mathbb{E}|v^n(t^-_{i+1})|^2+(1-\varepsilon)\mathbb{E}\int^{t_{i+1}}_{t_i}\|v^n(s)\|^2ds\leq \mathbb{E}|\eta^n(t^-_i)|^2
+\frac{CT}{n},\\
\mathbb{E}|\eta^n(t^-_{i+1})|^2+\varepsilon(2-K_2) \int^{t_{i+1}}_{t_i}\|\eta^n(s)\|^2ds
\leq \mathbb{E}|v^n(t^-_{i+1})|^2+ \frac{C T}{n}.
\end{eqnarray*}
Adding all these inequalities from $i=0$ to $n-1$, we conclude the proof of (\ref{qq-6}). At the same time, when $\varepsilon >0$, it gives (\ref{qq-7}).
\end{proof}

Referring to \cite{B-B} and similar to Lemma \ref{lem-3}, we have the following higher moments of $H-$norm.
\begin{lemma}\label{lem-4}
Let $v_0\in \mathcal{H}$ be $\mathcal{F}_0-$measurable. Fix $\varepsilon\in [0,1)$. Let $\textbf{Hypotheses A, B}$ hold with $K_2< \frac{2}{2p-1}$, for some $p\geq 2$ and $K_4\leq L_2<2$. Then there exists a positive constant $C=C(\varepsilon,T,\mathbb{E}|v_0|^{2p}, K_i, L_i)$ such that for every integer $n\geq 1$,
\begin{eqnarray}\label{qq-16}
\sup_{t\in[0,T]}\mathbb{E}\Big(|\eta^n(t)|^{2p}+\sup_{s\in[d_n(t),d^*_n(t))}|v^n(s)|^{2p}\Big)+\mathbb{E}\int^T_0\|v^n(s)\|^2|v^n(s)|^{2(p-1)}ds\leq C.
\end{eqnarray}
In particular, when $p=2$, it gives
\begin{eqnarray}\label{eee-10}
\mathbb{E}\int^T_0|v^n(s)|^4ds\leq\mathbb{E}\int^T_0\|v^n(s)\|^2|v^n(s)|^{2}ds\leq C.
\end{eqnarray}
Moreover, if $\varepsilon\in (0,1)$, there exists a constant C such that
\begin{eqnarray}\label{qq-17}
\sup_{n\in \mathbb{N}}\mathbb{E}\int^T_0\|\eta^n(s)\|^2|\eta^n(s)|^{2(p-1)}ds\leq C.
\end{eqnarray}
\end{lemma}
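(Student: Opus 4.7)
The plan is to imitate the induction structure of Lemma \ref{lem-3} with the test functional $|\cdot|^{2p}$ in place of $|\cdot|^2$, keeping a careful watch on the constant produced by the It\^o correction, since it is precisely that constant that dictates the threshold on $K_2$.

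First I would handle the deterministic step (\ref{qq-1}). Taking the $H$-inner product with $|v^n(t)|^{2(p-1)}v^n(t)$ on $[t_i,t]$ and using the cancellation identity $\langle B(v^n,v^n),v^n\rangle = 0$ from (\ref{ee-2}), one obtains pathwise
\begin{equation*}
|v^n(t)|^{2p} + 2p(1-\varepsilon)\int_{t_i}^t |v^n(s)|^{2(p-1)}\|v^n(s)\|^2\, ds \leq |\eta^n(t_i^-)|^{2p}
\end{equation*}
for $t\in[t_i,t_{i+1})$, whence $\mathbb{E}\sup_{t_i\le t<t_{i+1}}|v^n(t)|^{2p}\le \mathbb{E}|\eta^n(t_i^-)|^{2p}$. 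For the stochastic step (\ref{qq-2}), applying It\^o's formula to $|\eta^n|^{2p}$ produces an It\^o correction that, after computing $\tfrac{1}{2}D^{2}(|x|^{2p})(h,h)=p|x|^{2p-2}|h|^{2}+p(2p-2)|x|^{2p-4}\langle x,h\rangle^{2}$ and applying Cauchy--Schwarz, is bounded by $p(2p-1)|\eta^n|^{2(p-1)}\|\psi(s,\eta^n)\|^{2}_{\mathcal{L}_{2}(U;H)}$. The stochastic integral vanishes in expectation, and invoking Hypothesis A yields
\begin{equation*}
\mathbb{E}|\eta^n(t)|^{2p} + \varepsilon\bigl[2p - p(2p-1)K_2\bigr]\mathbb{E}\!\int_{t_i}^t \!|\eta^n|^{2(p-1)}\|\eta^n\|^2 ds \leq \mathbb{E}|v^n(t_{i+1}^-)|^{2p} + p(2p-1)\mathbb{E}\!\int_{t_i}^t \!|\eta^n|^{2(p-1)}\bigl(K_0+K_1|\eta^n|^2\bigr) ds.
\end{equation*}
The assumption $K_{2}<\tfrac{2}{2p-1}$ is exactly what makes the coefficient of the gradient term strictly positive, so it may be retained (for the integral bound) or discarded (for the sup bound); a Young inequality absorbs $K_{0}|\eta^{n}|^{2p-2}$ into $C+C|\eta^{n}|^{2p}$.

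Combining the deterministic and stochastic bounds produces a recursion of the form
\begin{equation*}
\sup_{t_i\le t<t_{i+1}}\mathbb{E}|\eta^n(t)|^{2p}\;\vee\;\mathbb{E}\sup_{t_i\le t<t_{i+1}}|v^n(t)|^{2p}\;\le\;(1+Ch)\,\mathbb{E}|\eta^n(t_i^-)|^{2p}+Ch,
\end{equation*}
with $h=T/n$. A discrete Gronwall iteration in $i$, exactly parallel to the derivation of (\ref{qq-15}), delivers the uniform bounds announced in (\ref{qq-16}). Summing the per-interval inequalities telescopes to the integral estimate on $\int_{0}^{T}|v^n|^{2(p-1)}\|v^n\|^{2} ds$, and when $\varepsilon>0$ the analogous gradient term in the stochastic inequality yields (\ref{qq-17}). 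The refinement (\ref{eee-10}) at $p=2$ follows from the Poincar\'e inequality $|v^n|\le C\|v^n\|$, which gives $|v^n|^{4}\le C|v^n|^{2}\|v^n\|^{2}$. The only subtle point is arithmetic: identifying precisely which combination of the It\^o coefficient $p(2p-1)$ and the $\varepsilon K_{2}$ factor yields the threshold $K_{2}<2/(2p-1)$. Beyond that, no new analytic difficulty arises compared with Lemma \ref{lem-3}.
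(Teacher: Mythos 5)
Your proposal is correct and follows exactly the route the paper intends: the paper omits the proof of this lemma, stating only that it is ``similar to Lemma \ref{lem-3},'' and your argument is precisely that extension, with the key arithmetic (the It\^o correction coefficient $p(2p-1)$ combined with $\varepsilon K_2$ producing the threshold $K_2<\tfrac{2}{2p-1}$) worked out correctly. The pathwise bound on the deterministic step, the discrete Gronwall iteration, the telescoping for the integral terms, and the Poincar\'e step for (\ref{eee-10}) all match what Lemma \ref{lem-3}'s proof does at $p=1$.
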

Compared with the 2D stochastic Navier-Stokes equations, we need the additional estimates of $\partial_z v^n$ and $\partial_z \eta^n$.

Define
\[
 r^n=\partial_z v^n,\quad q^n=\partial_z \eta^n.
\]
From (\ref{qq-1}), we have for $t\in [t_i, t_{i+1})$,
\begin{eqnarray}\label{qq-18}
dr^n+(1-\varepsilon)Ar^ndt+\Big(v^n\partial_x r^n+\Phi(v^n)\partial_z r^n\Big)dt=0.
\end{eqnarray}
Moreover, we deduce from (\ref{qq-2}) that for $t\in [t_i, t_{i+1})$,
\begin{eqnarray}\label{qq-20}
    dq^n+\varepsilon Aq^ndt=\partial_z \psi(t, \eta^n(t))dW(t).
\end{eqnarray}
The initial conditions for (\ref{qq-18}) and (\ref{qq-20}) are $r^n(t_i)=q^n(t^-_i)$, $q^n(t_i)=r^n(t^-_{i+1})$, respectively.
\begin{lemma}\label{lem-5}
Let $v_0\in \mathcal{H}$ be $\mathcal{F}_0-$measurable random variable. Fix $\varepsilon\in [0,1)$. Let $\textbf{Hypotheses A, B}$ hold with $K_2\leq K_4\leq L_2< 2$. Then there exists a positive constant $C=C(\varepsilon,T,\mathbb{E}|\partial_zv_0|^2,K_i, L_i)$ such that for every integer $n\geq 1$,
\begin{eqnarray}\label{qq-21}
\sup_{t\in[0,T]}\mathbb{E}\Big(|q^n(t)|^2+\sup_{s\in[d_n(t),d^*_n(t))}|r^n(s)|^2\Big)+\mathbb{E}\int^T_0\|r^n(s)\|^2ds\leq C.
\end{eqnarray}
Moreover, if $\varepsilon\in (0,1)$, there exists a constant C such that
\begin{eqnarray}\label{qq-22}
\sup_{n}\mathbb{E}\int^T_0\|q^n(s)\|^2ds\leq C.
\end{eqnarray}
\end{lemma}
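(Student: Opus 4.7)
The plan is to mirror the proof of Lemma \ref{lem-3}, with $r^n$ playing the role of $v^n$ and $q^n$ playing the role of $\eta^n$; the two essential ingredients are the cancellation property applied to the transport term in (\ref{qq-18}) and \textbf{Hypothesis B} in place of \textbf{Hypothesis A} for the noise in (\ref{qq-20}).

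First I would take the $L^2(\mathcal{M})$ inner product of (\ref{qq-18}) with $r^n$ and integrate over $(t_i, t]$ for $t \in [t_i, t_{i+1})$. The key observation is that $\langle r^n,\, v^n \partial_x r^n + \Phi(v^n) \partial_z r^n\rangle = 0$: writing the integrand as $\tfrac{1}{2}(v^n \partial_x (r^n)^2 + \Phi(v^n) \partial_z (r^n)^2)$ and integrating by parts, the interior term vanishes by $\partial_x v^n + \partial_z \Phi(v^n) = 0$; the boundary contribution on $\Gamma_l$ vanishes since $v^n|_{\Gamma_l} = 0$; and the contributions on $\Gamma_u \cup \Gamma_b$ vanish because both $\Phi(v^n)$ and $r^n = \partial_z v^n$ are zero there (using $\int_{-h}^0 v^n\,dz = 0$ together with the boundary conditions (\ref{eq1})--(\ref{eq2})). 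This produces the pathwise identity
\begin{equation*}
|r^n(t)|^2 + 2(1-\varepsilon)\int_{t_i}^t \|r^n(s)\|^2 ds = |q^n(t_i^-)|^2,
\end{equation*}
and hence $\mathbb{E}\bigl(\sup_{t_i \leq t < t_{i+1}} |r^n(t)|^2\bigr) \leq \mathbb{E}|q^n(t_i^-)|^2$, exactly analogous to (\ref{qq-10})--(\ref{qq-11}).

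Next I would apply It\^o's formula to $|q^n(t)|^2$ using (\ref{qq-20}); the stochastic integral is a true martingale so its expectation vanishes, and \textbf{Hypothesis B} bounds $\|\partial_z \psi(t, \eta^n)\|_{\mathcal{L}_2(U;H)}^2$ by $L_0 + L_1|q^n|^2 + \varepsilon L_2 \|q^n\|^2$. Since $L_2 < 2$, absorbing $\varepsilon L_2 \|q^n\|^2$ on the left yields
\begin{equation*}
\mathbb{E}|q^n(t)|^2 + \varepsilon(2 - L_2)\mathbb{E}\int_{t_i}^t \|q^n(s)\|^2 ds \leq \mathbb{E}|r^n(t_{i+1}^-)|^2 + \frac{L_0 T}{n} + L_1 \int_{t_i}^t \mathbb{E}|q^n(s)|^2\, ds,
\end{equation*}
and discarding the $V$-integral and applying Gronwall on $[t_i, t_{i+1})$ gives $\sup_{t_i \leq t < t_{i+1}} \mathbb{E}|q^n(t)|^2 \leq (\mathbb{E}|r^n(t_{i+1}^-)|^2 + L_0 T/n)\,e^{L_1 T/n}$.

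Chaining the two estimates through $\mathbb{E}|r^n(t_{i+1}^-)|^2 \leq \mathbb{E}|q^n(t_i^-)|^2$ produces the single recursion $\mathbb{E}|q^n(t_{i+1}^-)|^2 \leq (\mathbb{E}|q^n(t_i^-)|^2 + L_0 T/n)\,e^{L_1 T/n}$, starting from $q^n(0^-) = \partial_z v_0$. An induction on $i$, exactly as in (\ref{qq-14})--(\ref{qq-15}), gives the uniform bounds on $\sup_{t \in [0,T]} \mathbb{E}|q^n(t)|^2$ and on $\sup_{t \in [0,T]} \mathbb{E}\sup_{s \in [d_n(t), d_n^*(t))}|r^n(s)|^2$. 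Summing the incremental inequalities from $i=0$ to $n-1$ telescopes the boundary $|\cdot|^2$ terms and leaves $\mathbb{E}\int_0^T \|r^n\|^2\,ds \leq C$, completing (\ref{qq-21}); when $\varepsilon > 0$, the retained $\varepsilon(2-L_2)$ factor from the $q^n$-step yields (\ref{qq-22}). The only mildly delicate point is the cancellation identity for the nonlinear term in the $r^n$-equation, which rests on boundary and averaging properties inherited from $v^n$ rather than on $r^n$ being an element of $V$; once this is in hand, the remainder of the argument is a direct copy of the scheme used for Lemma \ref{lem-3}.
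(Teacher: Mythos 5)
Your proposal is correct and follows essentially the same route as the paper: the pathwise energy identity for $r^n$ via the cancellation property of the trilinear form, the It\^o/Gronwall step for $q^n$ under \textbf{Hypothesis B}, and the chaining--induction--telescoping argument copied from Lemma \ref{lem-3}. Your explicit justification of the cancellation $\langle r^n, v^n\partial_x r^n+\Phi(v^n)\partial_z r^n\rangle=0$ via the boundary conditions is a welcome elaboration of what the paper invokes implicitly, but the proof is otherwise identical in structure.
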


\begin{proof}
Taking the scalar product of (\ref{qq-18}) with $r^n$ in $H$ and integrating over $(t_i, t]$ for $t\in [t_i, t_{i+1})$. With the help of the cancellation property, we have
\begin{eqnarray}\label{qq-23}
\frac{d|r^n|^2}{dt}+2(1-\varepsilon)\|r^n\|^2\leq 0,
\end{eqnarray}
that is,
\begin{eqnarray}\label{qq-25}
|r^n(t)|^2+2(1-\varepsilon)\int^t_{t_i}\|r^n(s)\|^2ds
\leq |q^n(t^-_i)|^2.
\end{eqnarray}
Taking the expectation of (\ref{qq-25}), we deduce that
\begin{eqnarray}\label{qq-26}
\mathbb{E}(\sup_{t\in [t_i, t_{i+1})}|r^n(t)|^2)\leq \mathbb{E}|q^n(t^-_i)|^2.
\end{eqnarray}
Using It\^{o} formula to (\ref{qq-20}) and by \textbf{Hypothesis B}, we have for $t\in [t_i, t_{i+1})$,
\begin{eqnarray*}
\mathbb{E}|q^n(t)|^2+\varepsilon(2-L_2)\mathbb{E}\int^t_{t_i}\|q^n(s)\|^2ds\leq \mathbb{E} |r^n(t^-_{i+1})|^2+\frac{L_0 T}{n}+L_1\int^t_{t_i}\mathbb{E}|q^n(s)|^2ds .
\end{eqnarray*}
When $L_2<2$, ignoring the $V-$norm and by Gronwall inequality, we get
\begin{eqnarray}\label{qq-27}
\sup_{t\in [t_i, t_{i+1})}\mathbb{E}|q^n(t)|^2\leq \Big(\mathbb{E} |r^n(t^-_{i+1})|^2+\frac{L_0 T}{n}\Big){\rm{e}}^{\frac{L_1 T}{n}}.
\end{eqnarray}
Putting (\ref{qq-26}) into (\ref{qq-27}), we obtain
\begin{eqnarray*}
\sup_{t\in [t_i, t_{i+1})}\mathbb{E}|q^n(t)|^2\leq \Big(\mathbb{E}|q^n(t^-_i)|^2+\frac{L_0 T}{n}\Big){\rm{e}}^{\frac{L_1 T}{n}}.
\end{eqnarray*}
Set $\tilde{r}_3=L_1, \tilde{r}_4=L_0$, by the induction argument,
we have for $i=0, \cdot\cdot\cdot, n-1$,
\begin{eqnarray*}
\mathbb{E}(\sup_{t_i\leq t<t_{i+1}}|r^n(t)|^2)\vee (\sup_{t_i\leq t<t_{i+1}}\mathbb{E}|q^n(t)|^2)
\leq \mathbb{E}|\partial_z v_0|^2 {\rm{e}}^{(i+1)\frac{\tilde{r}_3T}{n}}+\frac{\tilde{r}_4 T}{n}\sum^{i+1}_{j=1}{\rm{e}}^{j \frac{\tilde{r}_3T}{n}}.
\end{eqnarray*}
Hence, we deduce that
\begin{eqnarray}\label{qq-28}
\Big[\sup_{t\in[0,T]}\mathbb{E}(\sup_{d_n(t)\leq s<d^*_n(t)}|r^n(s)|^2)\Big]\vee \Big[\sup_{t\in[0,T]}\mathbb{E}|q^n(t)|^2\Big]
\leq \mathbb{E}|\partial_zv_0|^2{\rm{e}}^{\tilde{r}_3T}+\frac{\tilde{r}_4}{\tilde{r}_3} {\rm{e}}^{2\tilde{r}_3T}.
\end{eqnarray}
Following the same process as Lemma \ref{lem-3}, we can conclude the rest result.
\end{proof}

\begin{lemma}\label{lem-6}
Let $v_0\in \mathcal{H}$ be $\mathcal{F}_0-$measurable random variable. Fix $\varepsilon\in [0,1)$. Let $\textbf{Hypotheses A, B}$ hold with $K_2\leq K_4< 2$ and $L_2<\frac{2}{2p-1}$ for some $p\geq 2$. Then there exists a positive constant $C=C(\varepsilon,T,\mathbb{E}|\partial_zv_0|^2, K_i, L_i)$ such that for every integer $n\geq 1$,
\begin{eqnarray}\label{qq-31}
\sup_{t\in[0,T]}\mathbb{E}\Big(|q^n(t)|^{2p}+\sup_{s\in[d_n(t),d^*_n(t))}|r^n(s)|^{2p}\Big)+\mathbb{E}\int^T_0\|r^n(s)\|^2|r^n (s)|^{2(p-1)}ds\leq C.
\end{eqnarray}
Moreover, if $\varepsilon\in (0,1)$, there exists a constant C such that
\begin{eqnarray}\label{qq-32}
\sup_{n\in \mathbb{N}}\mathbb{E}\int^T_0\|q^n(s)\|^2|q^n(s)|^{2(p-1)}ds\leq C.
\end{eqnarray}
\end{lemma}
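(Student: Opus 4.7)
The approach mirrors the proofs of Lemmas \ref{lem-3} and \ref{lem-5}, but applied to the nonlinear functional $x \mapsto |x|^{2p}$ on each subinterval $[t_i, t_{i+1})$, with a discrete induction in $i$ at the end. Fix $p \geq 2$ and set up the induction by assuming that $\mathbb{E}|q^n(t_i^-)|^{2p}$ is bounded by a constant depending only on $\mathbb{E}|\partial_z v_0|^{2p}$ and the data; this holds trivially for $i=0$.

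First I would treat $r^n$. On $[t_i, t_{i+1})$ the equation (\ref{qq-18}) is a deterministic PDE with the $\mathcal{F}_{t_i}$-measurable initial datum $q^n(t_i^-)$, so the chain rule applied to $|r^n|^{2p}$, together with the cancellation $b(v^n, r^n, r^n) = 0$ that kills the transport term, gives the pathwise identity
\begin{equation*}
\frac{d}{dt}|r^n(t)|^{2p} + 2p(1-\varepsilon)|r^n(t)|^{2p-2}\|r^n(t)\|^2 = 0,
\end{equation*}
which integrates to
\begin{equation*}
\sup_{t \in [t_i, t_{i+1})}|r^n(t)|^{2p} + 2p(1-\varepsilon)\int_{t_i}^{t_{i+1}}|r^n(s)|^{2p-2}\|r^n(s)\|^2\,ds \leq |q^n(t_i^-)|^{2p} \quad \text{a.s.}
\end{equation*}

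Next, for $q^n$ on $[t_i, t_{i+1})$ I would apply It\^o's formula to $|q^n|^{2p}$ using (\ref{qq-20}). The Hessian of $x \mapsto |x|^{2p}$ on $H$ produces the standard trace bound
\begin{equation*}
\mathrm{tr}\bigl[\psi^* \nabla^2(|x|^{2p})\psi\bigr] \leq 2p(2p-1)|x|^{2p-2}\|\psi\|^2_{\mathcal{L}_2(U;H)}.
\end{equation*}
Combined with \textbf{Hypothesis B} and after localization (justified by the $L^2$-in-$H$ bounds from Lemma \ref{lem-5}) the martingale term has zero expectation, and rearranging yields
\begin{equation*}
\mathbb{E}|q^n(t)|^{2p} + \varepsilon\bigl(2p - p(2p-1)L_2\bigr)\mathbb{E}\int_{t_i}^{t}|q^n|^{2p-2}\|q^n\|^2\,ds \leq \mathbb{E}|r^n(t_{i+1}^-)|^{2p} + C_p L_0 h + C_p L_1 \int_{t_i}^{t}\mathbb{E}|q^n(s)|^{2p}\,ds,
\end{equation*}
with $C_p = p(2p-1)$ and $h = T/n$. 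The hypothesis $L_2 < 2/(2p-1)$ is exactly what makes the coefficient of the $V$-norm integral strictly positive; dropping that term and invoking Gronwall on $[t_i, t_{i+1})$ gives
\begin{equation*}
\sup_{t \in [t_i, t_{i+1})}\mathbb{E}|q^n(t)|^{2p} \leq \bigl(\mathbb{E}|r^n(t_{i+1}^-)|^{2p} + C_p L_0 h\bigr)\, e^{C_p L_1 h}.
\end{equation*}
Chaining this with the pointwise $r^n$-bound and iterating in $i$ exactly as in the arguments following (\ref{qq-14}) and (\ref{qq-27}) produces the uniform pointwise estimate of (\ref{qq-31}). For the time-integral part of (\ref{qq-31}), and likewise for (\ref{qq-32}) when $\varepsilon \in (0,1)$, I would keep the $\|\cdot\|^2$ terms in both per-interval inequalities and telescope the sum over $i = 0, \ldots, n-1$, using the uniform-in-$n$ pointwise bounds on $\mathbb{E}|q^n|^{2p}$ and $\mathbb{E}\sup |r^n|^{2p}$ already obtained to control the accumulated right-hand sides.

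The principal technical point is the It\^o correction for the non-quadratic functional $x \mapsto |x|^{2p}$: identifying the sharp constant $p(2p-1)$ in the trace bound is what produces the threshold $L_2 < 2/(2p-1)$ that matches the hypothesis. Once this is in place the remaining work is bookkeeping parallel to Lemmas \ref{lem-3}--\ref{lem-5}; note that no interaction with $v^n, \eta^n$ occurs beyond the deterministic transport term (which vanishes by cancellation) and the coefficient $\partial_z \psi(t, \eta^n)$ (controlled by \textbf{Hypothesis B}), so the analysis decouples cleanly from the estimates of Lemma \ref{lem-4}.
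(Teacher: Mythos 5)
Your proposal is correct and follows exactly the route the paper intends: the paper omits the proof of this lemma, leaving it as the combination of the interval-by-interval induction from Lemma \ref{lem-5} with the $q$-th moment It\^{o} computation of Theorem \ref{thm-3} (with $q=2p$), and your trace bound $p(2p-1)|x|^{2p-2}\|\psi\|^2_{\mathcal{L}_2(U;H)}$ is precisely the paper's $\frac{q(q-1)}{2}|x|^{q-2}\|\psi\|^2_{\mathcal{L}_2(U;H)}$, yielding the same threshold $L_2<\frac{2}{2p-1}$. The chain rule on $|r^n|^{2p}$ with the cancellation $b(v^n,r^n,r^n)=0$, the Gronwall step, the chaining/induction over subintervals, and the telescoping of the $V$-norm integrals all match the arguments surrounding (\ref{qq-14}), (\ref{qq-27}) and (\ref{qq-28}).
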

Based on the above, we are ready to prove an upper bound of the $H-$norm of the difference between $v^n$ and $\eta^n$.
\begin{prp}\label{prp-1}
Let $v_0\in \mathcal{H}$  be $\mathcal{F}_0-$measurable random variable. For any $\varepsilon\in[0,1)$. Assume $\textbf{Hypotheses A, B}$ hold with $K_2< \frac{2}{3},\ K_4< 2$ and $L_2<\frac{2}{3}$, there exists a positive constant $C=C(\varepsilon,T,\mathbb{E}|\partial_zv_0|^4, K_i, L_i)$ such that for any $n\in \mathbb{N}$,
\begin{eqnarray}\label{qq-34}
\mathbb{E}\int^T_0|v^n(t)-\eta^n(t)|^2dt\leq \frac{CT}{n}.
\end{eqnarray}
\end{prp}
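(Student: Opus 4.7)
The strategy follows the scheme of \cite{B-B}: decompose $\eta^n-v^n$ using (\ref{qq-4})--(\ref{qq-5}), estimate each piece via the a priori bounds of Lemmas~\ref{lem-3}--\ref{lem-6}, and sum. Subtracting (\ref{qq-4}) from (\ref{qq-5}) gives, for $t\in[t_i,t_{i+1})$,
\begin{equation*}
\eta^n(t)-v^n(t)=-\int_t^{t_{i+1}}F_\varepsilon(s,v^n(s))\,ds-\varepsilon\int_{t_i}^tA\eta^n(s)\,ds+\int_{t_i}^t\psi(s,\eta^n(s))\,dW(s)=:I_1(t)+I_2(t)+I_3(t).
\end{equation*}
Using $|\eta^n-v^n|^2\le 3(|I_1|^2+|I_2|^2+|I_3|^2)$, the task reduces to bounding each piece in the $H$-norm after integration in $t$ and summation in $i$.

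For $I_3$, the It\^o isometry combined with Hypothesis~A gives $\mathbb{E}|I_3(t)|^2\le C(t-t_i)+\varepsilon K_2\mathbb{E}\int_{t_i}^{t_{i+1}}\|\eta^n\|^2\,ds$; integrating in $t$ and summing, using Lemma~\ref{lem-3} and (\ref{qq-7}), yields $\mathbb{E}\int_0^T|I_3|^2\,dt\le CT/n$. The term $I_2$ vanishes when $\varepsilon=0$; for $\varepsilon\in(0,1)$ I would use the representation $I_2=[\eta^n(t)-\eta^n(t_i^+)]-I_3$, apply It\^o's formula to $|\eta^n(t)-\eta^n(t_i^+)|^2$, and invoke (\ref{qq-7}) to arrive at $\mathbb{E}\int_0^T|I_2|^2\,dt\le CT/n$.

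The delicate piece is $I_1(t)=v^n(t)-v^n(t_{i+1}^-)$, purely deterministic given $\eta^n(t_i^-)$. From the energy identity for (\ref{qq-1}) on $[t,t_{i+1}]$ together with the cancellation $b(u,v,v)=0$, I would derive
\begin{equation*}
|I_1(t)|^2=-2(1-\varepsilon)\int_t^{t_{i+1}}\|v^n(s)\|^2\,ds+2(1-\varepsilon)\int_t^{t_{i+1}}((v^n(t),v^n(s)))\,ds+2\int_t^{t_{i+1}}b(v^n(s),v^n(s),v^n(t))\,ds.
\end{equation*}
The first term is $\le 0$ and is discarded; the second is controlled by Fubini and Cauchy--Schwarz by $(T/n)\int_{t_i}^{t_{i+1}}\|v^n\|^2\,ds$, which sums to $CT/n$ via Lemma~\ref{lem-3}. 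For the trilinear term I would use (\ref{ee-2}) to rewrite $b(v^n(s),v^n(s),v^n(t))=-b(v^n(s),v^n(t),v^n(s))$, so that Lemma~\ref{lemma-1} (with the variable $v^n(t)$ placed in the $\tilde v$-slot) yields
\begin{equation*}
|b(v^n(s),v^n(s),v^n(t))|\le C\|v^n(t)\||v^n(s)|\|v^n(s)\|+C|r^n(t)||v^n(s)|^{1/2}\|v^n(s)\|^{3/2},
\end{equation*}
separating the $t$-variable from the $s$-variables. Integrating in $t\in[t_i,t_{i+1})$, applying Fubini and H\"older, and using the moment bounds $\int_{t_i}^{t_{i+1}}\|v^n\|^2\,ds\le C|\eta^n(t_i^-)|^2$, $\sup_t\mathbb{E}|\eta^n(t)|^4\le C$, $\mathbb{E}\int_0^T|v^n|^2\|v^n\|^2\,ds\le C$ (Lemmas~\ref{lem-3}--\ref{lem-4}), together with the corresponding $\partial_z$-bounds on $|r^n|$, $|q^n|$ from Lemmas~\ref{lem-5}--\ref{lem-6}, each contribution to $\mathbb{E}\int_0^T|I_1|^2\,dt$ is of order $T/n$ after summation.

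The principal obstacle is exactly this trilinear contribution: extracting the correct powers of $T/n$ without a uniform $V$-norm bound on $v^n$ (unavailable for the 2D primitive equations, unlike the 2D Navier--Stokes case of \cite{B-B}) is where the $\partial_z$-estimates of Lemmas~\ref{lem-5}--\ref{lem-6} are essential, as highlighted in the introduction. Combining the three bounds via $|\eta^n-v^n|^2\le 3(|I_1|^2+|I_2|^2+|I_3|^2)$ and summing over $i=0,\dots,n-1$ produces (\ref{qq-34}).
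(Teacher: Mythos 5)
For $\varepsilon=0$ your argument coincides with the paper's: the decomposition into $v^n(t)-v^n(t_{i+1}^-)$ and the stochastic increment $\int_{t_i}^t\psi(s,\eta^n(s))\,dW(s)$, the energy identity with the cancellation $b(u,v,v)=0$, the antisymmetry (\ref{ee-2}) to move $v^n(t)$ into the middle slot of $b$, and the moment bounds $\mathbb{E}\int_0^T|v^n|^2\|v^n\|^2\,ds\le C$ and $\mathbb{E}\int_0^T|r^n|^4\,dt\le C$ from Lemmas \ref{lem-4} and \ref{lem-6} are exactly the ingredients used there, and your estimates of $I_1$ and $I_3$ go through.

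The gap is in the case $\varepsilon\in(0,1)$, in your treatment of $I_2(t)=-\varepsilon\int_{t_i}^tA\eta^n(s)\,ds$. Bounding $|I_2(t)|^2$ in $H$ directly would require $\mathbb{E}\int_0^T|A\eta^n(s)|^2\,ds$, i.e.\ $H^2$-regularity of $\eta^n$, which is not available at this stage (it appears only in Lemma \ref{lem-8}, under the stopping time $\tau^N_n$ and with the constant $\tilde K(N)$ --- neither of which can enter the statement of Proposition \ref{prp-1}). Your proposed detour, $I_2=[\eta^n(t)-\eta^n(t_i^+)]-I_3$ with It\^o's formula applied to $|\eta^n(t)-\eta^n(t_i^+)|^2$, does not close either: the drift term produces $-2\varepsilon\langle A\eta^n(s),\eta^n(s)-\eta^n(t_i^+)\rangle\le-\varepsilon\|\eta^n(s)\|^2+\varepsilon\|\eta^n(t_i^+)\|^2$, and since $\eta^n(t_i^+)=v^n(t_{i+1}^-)$ you are left needing a bound on $\sup_i\mathbb{E}\|v^n(t_{i+1}^-)\|^2$ (or at least $\frac{1}{n}\sum_i\mathbb{E}\|v^n(t_{i+1}^-)\|^2$), a pointwise-in-time $V$-estimate of $v^n$ which is precisely what is \emph{not} available without the stopping time; only $\mathbb{E}\int_0^T\|v^n\|^2\,ds\le C$ holds. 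This is why the paper abandons the crude inequality $|\eta^n-v^n|^2\le 3\sum_i|I_i|^2$ for $\varepsilon>0$ and instead applies It\^o's formula directly to $|\eta^n(t)-v^n(t)|^2$: then $A\eta^n(s)$ only ever appears inside the duality pairing $\langle A\eta^n(s),\eta^n(s)-v^n(s)\rangle\le\|\eta^n(s)\|\bigl(\|\eta^n(s)\|+\|v^n(s)\|\bigr)$, which is controlled by the integrated $V$-norm bounds (\ref{qq-6}) and (\ref{qq-7}) alone, after Fubini contributes the factor $T/n$. You should adopt this structure for the $\varepsilon\in(0,1)$ case; your estimates of the nonlinear term and of the stochastic term can then be reused essentially verbatim.
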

\begin{proof}
\textbf{Case 1:} $\varepsilon =0$. For any $t\in [0,T)$, by (\ref{qq-2}) and \textbf{Hypothesis A}, we have
\begin{eqnarray*}
\mathbb{E}|\eta^n(t)-v^n(d^*_n(t))|^2=\mathbb{E}\int^t_{d_n(t)}\|\psi(s,\eta^n(s))\|^2_{\mathcal{L}_2(U;H)}ds\leq \mathbb{E}\int^t_{d_n(t)}(K_0+ K_1|\eta^n(s)|^2)ds.
\end{eqnarray*}
Then, by Fubini's theorem and Lemma \ref{lem-3}, we obtain
\begin{eqnarray}\label{qq-30}
\mathbb{E}\int^T_0|\eta^n(t)-v^n(d^*_n(t))|^2dt\leq C\mathbb{E}\int^T_0(1+|\eta^n(s)|^2)\left(\int^{d^*_n(s)}_sdt\right)ds\leq C\frac{T}{n}.
\end{eqnarray}
From (\ref{qq-1}), we have
\begin{eqnarray*}
|v^n(d^*_n(t)^-)-v^n(t)|^2=2\int^{d^*_n(t)}_t \langle v^n(s)-v^n(t), dv^n(s)\rangle=\sum^2_{i=1}I_i(t),
\end{eqnarray*}
where
\begin{eqnarray*}
I_1(t)&=&-2(1-\varepsilon)\int^{d^*_n(t)}_t\langle v^n(s)-v^n(t), Av^n(s)\rangle ds,\\
I_2(t)&=&-2\int^{d^*_n(t)}_t\langle v^n(s)-v^n(t), B(v^n(s),v^n(s))\rangle ds,
\end{eqnarray*}
Using Lemma \ref{lem-3} and the Young inequality, we have
\begin{eqnarray*}
\Big|\mathbb{E}\int^T_0I_1(t)dt\Big|&=&\Big|(1-\varepsilon)\mathbb{E}\int^T_0\int^{d^*_n(t)}_t(-2\|v^n(s)\|^2+2\|v^n(s)\|\|v^n(t)\|)dsdt\Big|\\
&\leq& \Big|(1-\varepsilon)\mathbb{E}\int^T_0\int^{d^*_n(t)}_t(-2\|v^n(s)\|^2+2\|v^n(s)\|^2+\frac{1}{2}\|v^n(t)\|^2)dsdt\Big|\\
&\leq&  \frac{1-\varepsilon}{2}\mathbb{E}\int^T_0\|v^n(t)\|^2\left(\int^{d^*_n(t)}_t ds\right)dt\leq \frac{CT}{n}.
\end{eqnarray*}
By (\ref{ee-3}), we have
\begin{eqnarray*}
\Big|\mathbb{E}\int^T_0I_2(t)dt\Big|&\leq& 2\mathbb{E}\int^T_0\int^{d^*_n(t)}_t \|v^n(t)\||v^n(s)|^2_{4}dsdt
+2\mathbb{E}\int^T_0\int^{d^*_n(t)}_t|r^n(t)|\|v^n(s)\||v^n(s)|^{\frac{1}{2}}\|v^n(s)\|^{\frac{1}{2}}dsdt\\
&:=&K_1+K_2.
\end{eqnarray*}
By Lemma \ref{lem-4}, we deduce that
\begin{eqnarray*}
K_1&=&2\mathbb{E}\int^T_0\|v^n(t)\|\left(\int^{d^*_n(t)}_t|v^n(s)|^2_{4}ds\right)dt\\
&\leq&2\left(\mathbb{E}\int^T_0\|v^n(t)\|^2dt\right)^{\frac{1}{2}}\left(\mathbb{E}\int^T_0\Big(\int^{d^*_n(t)}_t|v^n(s)|^2_{4}ds\Big)^2dt\right)^{\frac{1}{2}}\\
&\leq&C\left(\mathbb{E}\int^T_0\frac{T}{n}\int^{d^*_n(t)}_t|v^n(s)|^4_{4}dsdt\right)^{\frac{1}{2}}\\
&\leq&C\left(\frac{T}{n}\mathbb{E}\int^T_0|v^n(s)|^4_{4}\Big(\int^{s}_{d_n(s)}dt\Big)ds\right)^{\frac{1}{2}}\\
&\leq&\frac{CT}{n}.
\end{eqnarray*}
By the Cauchy-Schwarz inequality, Fubini's theorem and Lemmas \ref{lem-3}, \ref{lem-4}, \ref{lem-6}, we get
\begin{eqnarray*}
K_2&\leq& \mathbb{E}\int^T_0\int^{d^*_n(t)}_t|r^n(t)|\|v^n(s)\||v^n(s)|^{\frac{1}{2}}\|v^n(s)\|^{\frac{1}{2}}dsdt\\
&\leq& \mathbb{E}\int^T_0\int^{d^*_n(t)}_t|r^n(t)|^2|v^n(s)|\|v^n(s)\|dsdt+\mathbb{E}\int^T_0\int^{d^*_n(t)}_t\|v^n(s)\|^2dsdt\\
&\leq& \mathbb{E}\int^T_0|r^n(t)|^4\int^{d^*_n(t)}_tdsdt+\mathbb{E}\int^T_0\int^{d^*_n(t)}_t(1+|v^n(s)|^2)\|v^n(s)\|^2dsdt\\
&\leq& \mathbb{E}\int^T_0|r^n(t)|^4\int^{d^*_n(t)}_tdsdt+\mathbb{E}\int^T_0(1+|v^n(s)|^2)\|v^n(s)\|^2(\int^s_{d_n(s)}dt)ds\\
&\leq& \frac{CT}{n}.
\end{eqnarray*}
Therefore,
\begin{eqnarray}\label{qq-33}
\mathbb{E}\int^T_0|v^n(d^*_n(t)^-)-v^n(t)|^2dt\leq \frac{CT}{n}.
\end{eqnarray}
Combining (\ref{qq-30}) and (\ref{qq-33}), we conclude the result when $\varepsilon =0$.

\textbf{Case 2:} $\varepsilon\in(0,1)$. For any $t\in [0,T]$, from (\ref{qq-4}) and (\ref{qq-5}), we have
\begin{eqnarray*}
\eta^n(t)-v^n(t)=-\int^{d^*_n(t)}_t F_{\varepsilon}(s,v^n(s))ds-\varepsilon \int^t_{d_n(t)}A\eta^n(s)ds+\int^t_{d_n(t)}\psi(s, \eta^n(s))dW(s).
\end{eqnarray*}
Applying It\^{o} formula to $|\eta^n(t)-v^n(t)|^2$, we obtain
\begin{eqnarray*}
\mathbb{E}\int^T_0|\eta^n(t)-v^n(t)|^2dt=\sum^4_{i=1}J_i,
\end{eqnarray*}
where
\begin{eqnarray*}
J_1(t)&=&-2(1-\varepsilon)\mathbb{E}\int^T_0\int^{d^*_n(t)}_t\langle Av^n(s), \eta^n(s)-v^n(s)\rangle ds dt,\\
J_2(t)&=&-2\mathbb{E}\int^T_0\int^{d^*_n(t)}_t\langle B(v^n(s),v^n(s)), \eta^n(s)-v^n(s)\rangle ds dt,\\
J_3(t)&=&-2\varepsilon\mathbb{E}\int^T_0\int^t_{d_n(t)}\langle A\eta^n(s), \eta^n(s)-v^n(s)\rangle ds dt,\\
J_4(t)&=&\mathbb{E}\int^T_0\int^t_{d_n(t)}\|\psi(s, \eta^n(s))\|^2_{\mathcal{L}_2(U;H)}dsdt.
\end{eqnarray*}
Exactly as Page 12-13 in \cite{B-B}, we have
\begin{eqnarray*}
J_1(t)\leq \frac{C(1-\varepsilon)T}{n},\quad J_3(t)\leq \frac{C\varepsilon T}{n},\quad J_4(t)\leq \frac{CT}{n}.
\end{eqnarray*}
By (\ref{ee-3}), the Cauchy-Schwarz inequality, Fubini's theorem and Lemmas \ref{lem-3},\ref{lem-4} and \ref{lem-6}, we have
\begin{eqnarray*}
J_2(t)&\leq& 2\mathbb{E}\int^T_0\int^{d^*_n(t)}_t(\|\eta^n(s)\||v^n(s)|\|v^n(s)\|+|\partial_z \eta^n(s)|\|v^n(s)\|^{\frac{3}{2}}|v^n(s)|^{\frac{1}{2}})ds\\
&\leq& C\mathbb{E}\int^T_0(\|\eta^n(s)\|^2+|v^n(s)|^2\|v^n(s)\|^2+|q^n(s)|^4+\|v^n(s)\|^{2}+|v^n(s)|^2\|v^n(s)\|^{2})\Big(\int^s_{d_n(s)}dt\Big)ds \\
&\leq&\frac{CT}{n}.
\end{eqnarray*}
The above estimates imply that (\ref{qq-34}) holds when $\varepsilon\in (0,1)$.
\end{proof}

\subsection{$V-$norm of the difference between $v^n$ and $\eta^n$}
Now, we need an additional hypothesis.
\begin{flushleft}
\textbf{Hypothesis C:} There exist constants $R_i, i=0,1,2$, such that for $t\in[0,T]$, $0\leq\varepsilon<1$,
\begin{eqnarray*}
\|\psi(t, \phi)\|^2_{\mathcal{L}_2(U;V)}\leq R_0+R_1\|\phi\|^2+\varepsilon R_2|A\phi|^2, \quad \phi\in D(A).
\end{eqnarray*}
\end{flushleft}

%

Fix $n$, for some $N>0$, define the stopping time
\begin{eqnarray}\label{eq-74}
\tau^N_n:=\inf\left\{t: \sup_{i=0,\cdot\cdot\cdot,n-1} \int^{t_{i+1}\wedge t}_{t_i\wedge t}(|v^n(s)|^2\|v^n(s)\|^2+|r^n(s)|\|r^n(s)\|)ds>\frac{N}{n}\right\}.
\end{eqnarray}
Then, we obtain
\begin{lemma}\label{lem-8}
Let $v_0\in V$. Fix $\varepsilon\in[0,1)$. Let \textbf{Hypotheses A, B, C} hold with $K_2<\frac{2}{3},\ K_4<2$ and $L_2<\frac{2}{3},\ R_2<2$, then there exists a positive constant $C=C(\varepsilon,T,\mathbb{E}\|v_0\|^2, K_i,L_i,R_i)$ such that for any integer $n\geq 1$,
\begin{eqnarray}\label{qq-49}
\sup_{t\in[0,T\wedge\tau^N_n]}\mathbb{E}\Big(\|\eta^n(t)\|^2+\sup_{s\in[d_n(t)\wedge\tau^N_n,d^*_n(t)\wedge\tau^N_n)}\|v^n(s)\|^2\Big)+\mathbb{E}\int^{T\wedge\tau^N_n}_0\|v^n(s)\|^2_2ds\leq C\tilde{K}(N),
\end{eqnarray}
where $\tilde{K}(N)=\frac{1}{N}e^{C(T) N}$.
Moreover, if $\varepsilon\in (0,1)$, we have
\begin{eqnarray}\label{qq-50}
\sup_{n\in \mathbb{N}}\mathbb{E}\int^{T\wedge\tau^N_n}_0\|\eta^n(s)\|^2_2ds\leq C\tilde{K}(N).
\end{eqnarray}
\end{lemma}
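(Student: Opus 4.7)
\textbf{Proof proposal for Lemma \ref{lem-8}.} The plan is to perform $V$-level energy estimates for $v^n$ and $\eta^n$ separately on each subinterval $[t_i,t_{i+1})$, exploit the stopping time $\tau^N_n$ to tame the nonlinear growth factor in the deterministic step, and chain the two types of estimates together by induction across the $n$ subintervals (mirroring the argument already carried out at the $H$-level in Lemma \ref{lem-3}). The transfer conditions $v^n(t_i)=\eta^n(t_i^-)$ and $\eta^n(t_i)=v^n(t_{i+1}^-)$ make this chaining clean, and the moment bounds from Lemmas \ref{lem-3}--\ref{lem-6} will be used to handle the ``harmless'' remainder terms that appear after Young's inequality.

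For the deterministic piece, I would take the scalar product of (\ref{qq-1}) with $Av^n$ in $H$ on each interval $[t_i,t_{i+1})$ to get
\begin{equation*}
\tfrac{d}{dt}\|v^n\|^2 + 2(1-\varepsilon)|Av^n|^2 = -2\langle B(v^n,v^n),Av^n\rangle.
\end{equation*}
Using the anisotropic bound on $B$ from (a version of) Lemma \ref{lemma-1} suited to $D(A)$-tested quantities, together with Young's inequality, the goal is an estimate of the schematic shape
\begin{equation*}
|\langle B(v^n,v^n),Av^n\rangle| \leq \tfrac{1-\varepsilon}{2}|Av^n|^2 + C\bigl(|v^n|^2\|v^n\|^2 + |r^n|\|r^n\|\bigr)\|v^n\|^2 + \text{moment terms},
\end{equation*}
where the two coefficients in the first bracket are precisely the quantities appearing in the definition (\ref{eq-74}) of $\tau^N_n$. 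Applying Gronwall's lemma on $[t_i\wedge\tau^N_n,t\wedge\tau^N_n]$, the stopping time constraint $\int(|v^n|^2\|v^n\|^2+|r^n|\|r^n\|)\,ds\leq N/n$ yields a multiplicative factor $e^{CN/n}$ per subinterval, giving
\begin{equation*}
\|v^n(t)\|^2 + (1-\varepsilon)\!\!\int_{t_i\wedge\tau^N_n}^{t\wedge\tau^N_n}\!\!|Av^n(s)|^2\,ds \leq e^{CN/n}\bigl(\|\eta^n(t_i^-)\|^2 + \tfrac{C}{n}X_i\bigr),
\end{equation*}
where $X_i$ collects terms whose expectation is bounded uniformly in $n$ by Lemmas \ref{lem-3}--\ref{lem-6}.

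For the stochastic piece, I would apply It\^o's formula to $\|\eta^n\|^2$ from (\ref{qq-2}) to obtain
\begin{equation*}
d\|\eta^n\|^2 + 2\varepsilon|A\eta^n|^2\,dt = 2\langle A\eta^n,\psi(t,\eta^n)\,dW\rangle + \|\psi(t,\eta^n)\|^2_{\mathcal{L}_2(U;V)}\,dt.
\end{equation*}
\textbf{Hypothesis C} controls the trace term, and since $R_2<2$ the $\varepsilon R_2|A\eta^n|^2$ contribution is absorbed into the left-hand side. Taking expectation (the martingale vanishes) and applying Gronwall over $[t_i,t_{i+1})$ gives a bound of the form $\mathbb{E}\|\eta^n(t)\|^2 \leq (\mathbb{E}\|v^n(t_{i+1}^-)\|^2 + CR_0T/n)e^{CR_1T/n}$, together with the integrated $|A\eta^n|^2$ bound when $\varepsilon>0$. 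Combining the two steps and iterating from $i=0$ up through $n-1$ (using $v_0\in V$ for the base case), the multiplicative factors compound to $e^{CN}$ across all $n$ subintervals, yielding the claimed bound $C\tilde{K}(N)=\frac{C}{N}e^{C(T)N}$ (the $\frac{1}{N}$ prefactor being a matter of how the constant is arranged for use in Sect.~5).

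The hard part will be Step~1: producing the nonlinear bound whose ``dangerous'' piece is exactly $(|v^n|^2\|v^n\|^2+|r^n|\|r^n\|)\|v^n\|^2$, matching the stopping time. The anisotropic structure of the primitive equations forces a split between the $v^n\partial_x v^n$ and $\Phi(v^n)\partial_z v^n$ parts of $B$; the second term in (\ref{ee-3}) containing $|\partial_z\tilde{v}|$ is precisely what generates the $|r^n|\|r^n\|$ contribution, and is the key point of departure from the 2D Navier--Stokes treatment in \cite{B-B}. One also has to verify that the residual ``moment terms'' produced by Young's inequality are integrable uniformly in $n$; this is where the a priori bounds of Lemmas \ref{lem-4} and \ref{lem-6} on $\int_0^T|v^n|^2\|v^n\|^2\,ds$ and on $|r^n|^{2p}$ become essential, since $\mathbb{E}\sup_t\|v(t)\|^2$ is not at our disposal for this model.
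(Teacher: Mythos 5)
Your proposal follows essentially the same route as the paper's proof: test (\ref{qq-1}) with $Av^n$, bound $\langle B(v^n,v^n),Av^n\rangle$ by $\tfrac{1-\varepsilon}{2}\|v^n\|_2^2+C_1(|v^n|^2\|v^n\|^2+|r^n|\|r^n\|)\|v^n\|^2$, run a Gronwall/exponential-weight argument so that the stopping time converts the nonlinear factor into $e^{C_1N/n}$ per subinterval, handle $\eta^n$ via It\^o's formula with \textbf{Hypothesis C} and $R_2<2$, and chain the two steps by induction as in Lemma \ref{lem-3} (the paper implements your Gronwall step as a chain rule on $e^{\phi(t)}\|v^n(t)\|^2$ with $\phi$ built from the stopping-time integrand, and its nonlinear bound is clean with no residual moment terms, but these are cosmetic differences). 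You also correctly identify the source of the $|r^n|\|r^n\|$ term and of the $\tfrac{1}{N}$ prefactor in $\tilde K(N)$, so the plan matches the paper's argument.
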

\begin{proof}
Taking the scalar product of (\ref{qq-1}) by $Av^n$ in $H$ and integrating over $(t_i, t]$ for $t\in[t_i, t_{i+1})$, we have
\begin{eqnarray*}
\|v^n(t)\|^2+2(1-\varepsilon)\int^t_{t_i}\|v^n(s)\|^2_2ds
=\|\eta^n(t^-_i)\|^2-2\int^t_{t_i}\langle B(v^n(s), v^n(s)), Av^n(s)\rangle ds.
\end{eqnarray*}
Applying the chain rule to $e^{\phi(t)}\|v^n(t)\|^2$, we reach
\begin{eqnarray*}
&&e^{\phi(t)}\|v^n(t)\|^2+2(1-\varepsilon)\int^t_{t_i}e^{\phi(s)}\|v^n(s)\|^2_2ds\\
&=&e^{\phi(t^-_i)}\|\eta^n(t^-_i)\|^2
-2\int^t_{t_i}e^{\phi(s)}\langle B(v^n(s), v^n(s)), Av^n(s)\rangle ds+\int^t_{t_i}\phi'(s)\|v^n(s)\|^2 e^{\phi}ds.
\end{eqnarray*}
Using H\"{o}lder inequality and interpolation inequality, we deduce that
 \begin{eqnarray*}
&& |\langle B(v^n(s), v^n(s)), Av^n(s)\rangle |\\
 &\leq & C|Av^n||v^n|^{\frac{1}{2}}\|v^n\|\|v^n\|^{\frac{1}{2}}_2+C|Av^n|\|v^n\||r^n|^{\frac{1}{2}}\|r^n\|^{\frac{1}{2}}\\
 &\leq & \frac{(1-\varepsilon)}{2}\|v^n\|^2_2+C_1(|v^n|^2\|v^n\|^2+|r^n|\|r^n\|)\|v^n\|^2.
 \end{eqnarray*}
Then, we have
\begin{eqnarray}\notag
&&\mathbb{E}\left(\sup_{t\in[t_i\wedge \tau^N_n, t_{i+1}\wedge \tau^N_n)}e^{\phi(t)}\|v^n(t)\|^2+(1-\varepsilon)\int^{t_{i+1}\wedge \tau^N_n}_{t_i\wedge \tau^N_n}e^{\phi(s)}\|v^n(s)\|^2_2ds\right)\\ \notag
&\leq&\mathbb{E}(e^{\phi(t^-_i\wedge \tau^N_n)}\|\eta^n(t^-_i\wedge \tau^N_n)\|^2)+2C_1\mathbb{E}\int^{ t_{i+1}\wedge \tau^N_n}_{t_i\wedge \tau^N_n}e^{\phi(s)}(|v^n|^2\|v^n\|^2+|r^n|\|r^n\|)\|v^n\|^2ds\\
\label{eee-1}
&&+\int^{ t_{i+1}\wedge \tau^N_n}_{t_i\wedge \tau^N_n}\phi'(s)\|v^n(s)\|^2 e^{\phi}ds.
\end{eqnarray}
Taking the previous estimates into account, for $t\in[t_i\wedge \tau^N_n, t_{i+1}\wedge \tau^N_n]$, set
\[
\phi(t)=-C_1\int^t_{t^-_i\wedge \tau^N_n}(|v^n|^2\|v^n\|^2+|r^n|\|r^n\|)ds,
\]
where $C_1$ is the constant in (\ref{eee-1}).
By Gronwall inequality, we have
\begin{eqnarray}\label{eee-2}
\mathbb{E}\left(\sup_{t\in[t_i\wedge \tau^N_n, t_{i+1}\wedge \tau^N_n)}e^{\phi(t)}\|v^n(t)\|^2\right)
\leq\mathbb{E}(\|\eta^n(t^-_i\wedge \tau^N_n)\|^2).
\end{eqnarray}
Since $e^{\phi(t_{i+1}\wedge \tau^N_n)}\geq e^{-C_1\frac{N}{n}}$ $a.s.$, we deduce from (\ref{eee-2})  that
\begin{eqnarray}\label{eee-3}
\mathbb{E}\left(\sup_{t\in[t_i\wedge \tau^N_n, t_{i+1}\wedge \tau^N_n)}\|v^n(t)\|^2\right)
\leq\mathbb{E}(\|\eta^n(t^-_i\wedge \tau^N_n)\|^2) e^{C_1\frac{N}{n}}.
\end{eqnarray}

Applying It\^{o} formula to (\ref{qq-2}), by \textbf{Hypothesis B}, we have for $t\in [t_i\wedge \tau^N_n, t_{i+1}\wedge \tau^N_n)$,
\begin{eqnarray*}
\mathbb{E}\|\eta^n(t)\|^2+\varepsilon(2-R_2)\mathbb{E}\int^t_{t_i\wedge \tau^N_n}\|\eta^n(s)\|^2_2ds\leq \mathbb{E} \|v^n(t^-_{i+1}\wedge \tau^N_n)\|^2+\frac{R_0 T}{n}+R_1\int^t_{t_i\wedge \tau^N_n}\mathbb{E}|\eta^n(s)|^2ds .
\end{eqnarray*}
When $R_2<2$, we can ignore the $V-$norm. Then, by (\ref{lem-3}) and Gronwall inequality, we get
\begin{eqnarray}\label{qq-53}
\sup_{t\in [t_i\wedge \tau^N_n, t_{i+1}\wedge \tau^N_n)}\mathbb{E}\|\eta^n(t)\|^2\leq \Big(\mathbb{E} \|v^n(t^-_{i+1}\wedge \tau^N_n)\|^2+\frac{R_0 T}{n}\Big){\rm{e}}^{\frac{R_1T}{n}}.
\end{eqnarray}
Putting (\ref{eee-3}) into (\ref{qq-53}), we deduce that
\begin{eqnarray*}
\sup_{t\in [t_i\wedge \tau^N_n, t_{i+1}\wedge \tau^N_n)}\mathbb{E}\|\eta^n(t)\|^2\leq \Big(\mathbb{E}\|\eta^n(t^{-}_i\wedge \tau^N_n)\|^2+\frac{R_0 T}{n}\Big){\rm{e}}^{\frac{(C_1N+R_1T) }{n}}.
\end{eqnarray*}
Set $\tilde{r}_5=C_1N+R_1T, \tilde{r}_6=R_0 T$, by the induction argument,
we have for $i=0, \cdot\cdot\cdot, n-1$,
\begin{eqnarray*}
\mathbb{E}(\sup_{t_i\wedge \tau^N_n\leq t<t_{i+1}\wedge \tau^N_n}\|v^n(t)\|^2)\vee (\sup_{t_i\wedge \tau^N_n\leq t<t_{i+1}\wedge \tau^N_n}\mathbb{E}\|\eta^n(t)\|^2)
\leq \mathbb{E}\| v_0\|^2 {\rm{e}}^{(i+1)\frac{\tilde{r}_5}{n}}+\frac{\tilde{r}_6 }{n}\sum^{i+1}_{j=1}{\rm{e}}^{j \frac{\tilde{r}_5}{n}}.
\end{eqnarray*}
Hence, we deduce that
\begin{eqnarray}\label{qq-55}
\Big[\sup_{t\in[0,T\wedge \tau^N_n]}\mathbb{E}(\sup_{d_n(t)\wedge \tau^N_n\leq s<d^*_n(t)\wedge \tau^N_n}\|v^n(s)\|^2)\Big]\vee \Big[\sup_{t\in[0,T\wedge \tau^N_n]}\mathbb{E}\|\eta^n(t)\|^2\Big]
\leq \mathbb{E}\|v_0\|^2 {\rm{e}}^{\tilde{r}_5}+\frac{\tilde{r}_6}{\tilde{r}_5} {\rm{e}}^{2\tilde{r}_5}.
\end{eqnarray}

Exactly following the same procedure as Lemma \ref{lem-3}, we can obtain the result.
\end{proof}

\begin{lemma}\label{lem-9}
Let $v_0\in V$ be $\mathcal{F}_0-$measurable random variable. Fix $\varepsilon\in [0,1)$. Let $\textbf{Hypotheses A,B,C}$ hold with $K_2<\frac{2}{3},\ K_4<2$ and $L_2<\frac{2}{2p-1},\  R_2<\frac{2}{2p-1}$ for some $p\geq 2$. Then there exists a positive constant $C=C(\varepsilon,T,\mathbb{E}\|v_0\|^2,K_2,K_4,L_2,R_2)$ such that for every integer $n\geq 1$,
\begin{eqnarray}\label{qq-58}
\sup_{t\in[0,T\wedge \tau^N_n]}\mathbb{E}\Big(\|\eta^n(t)\|^{2p}+\sup_{s\in[d_n(t)\wedge \tau^N_n,d^*_n(t)\wedge \tau^N_n)}\|v^n(s)\|^{2p}\Big)
+\mathbb{E}\int^{T\wedge \tau^N_n}_0\|v^n(s)\|^2_2\|v^n(s)\|^{2(p-1)}ds\leq C\tilde{K}(N),
\end{eqnarray}
where $\tilde{K}(N)$ is the same as Lemma \ref{lem-8}.

Moreover, if $\varepsilon\in (0,1)$, we have
\begin{eqnarray}\label{qq-59}
\sup_{n\in \mathbb{N}}\mathbb{E}\int^{T\wedge \tau^N_n}_0\|\eta^n(s)\|^2_2\|\eta^n(s)\|^{2(p-1)}ds\leq C\tilde{K}(N).
\end{eqnarray}
\end{lemma}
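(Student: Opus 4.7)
The plan is to mirror the proof of Lemma \ref{lem-8} but at the level of the $2p$-th moment of the $V$-norm, replacing $\|\cdot\|^2$ by $\|\cdot\|^{2p}$ everywhere and using the chain rule $\frac{d}{dt}\|v^n\|^{2p} = p\|v^n\|^{2(p-1)}\frac{d}{dt}\|v^n\|^2$ on the deterministic step and It\^o's formula applied to $X \mapsto X^p$ with $X=\|\eta^n\|^2$ on the stochastic step. The stopping time $\tau^N_n$ will play exactly the same role as in Lemma \ref{lem-8}: it guarantees that a certain integral involving the nonlinear bad term is at most $N/n$ on each subinterval, so an integrating factor based on it remains bounded by a constant depending only on $N$.

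For the deterministic half on $[t_i, t_{i+1})$, I would take the scalar product of \eqref{qq-1} with $Av^n$, multiply by $p\|v^n\|^{2(p-1)}$, and use the same sharp nonlinear bound from Lemma \ref{lem-8}, namely $|\langle B(v^n,v^n), Av^n\rangle|\leq \tfrac{1-\varepsilon}{2}\|v^n\|_2^2 + C_1(|v^n|^2\|v^n\|^2 + |r^n|\|r^n\|)\|v^n\|^2$. After absorbing one $\|v^n\|_2^2$-term into the left side, I land at the differential inequality
\begin{eqnarray*}
\frac{d}{dt}\|v^n\|^{2p} + p(1-\varepsilon)\|v^n\|^{2(p-1)}\|v^n\|_2^2 \leq 2pC_1\bigl(|v^n|^2\|v^n\|^2+|r^n|\|r^n\|\bigr)\|v^n\|^{2p}.
\end{eqnarray*}
Introducing $\phi(t)=-2pC_1\int_{t_i\wedge\tau^N_n}^{t}(|v^n|^2\|v^n\|^2+|r^n|\|r^n\|)ds$ and applying the chain rule to $e^{\phi(t)}\|v^n(t)\|^{2p}$ eliminates the bad term; by the very definition \eqref{eq-74} of $\tau^N_n$ one has $|\phi|\leq 2pC_1 N/n$ on the stopping interval, hence $e^{\phi}\geq e^{-2pC_1 N/n}$, which gives
\begin{eqnarray*}
\mathbb{E}\!\!\sup_{t\in[t_i\wedge\tau^N_n,\,t_{i+1}\wedge\tau^N_n)}\!\!\|v^n(t)\|^{2p} \leq \mathbb{E}\|\eta^n(t_i^-\wedge\tau^N_n)\|^{2p}\,e^{2pC_1 N/n}.
\end{eqnarray*}

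For the stochastic half I would apply It\^o's formula to $\|\eta^n\|^{2p}$ driven by \eqref{qq-2}. The quadratic variation contributes a term of order $2p(p-1)\|\eta^n\|^{2(p-1)}\|\psi(s,\eta^n)\|^2_{\mathcal{L}_2(U;V)}$, combined with the first order It\^o term $p\|\eta^n\|^{2(p-1)}\|\psi\|^2_{\mathcal{L}_2(U;V)}$ it totals $p(2p-1)\|\eta^n\|^{2(p-1)}\|\psi\|^2_{\mathcal{L}_2(U;V)}$. Taking expectation kills the martingale part and \textbf{Hypothesis C} gives
\begin{eqnarray*}
\mathbb{E}\|\eta^n(t)\|^{2p}+[\,2p\varepsilon - \varepsilon p(2p-1)R_2\,]\mathbb{E}\!\!\int_{t_i\wedge\tau^N_n}^{t}\!\!\|\eta^n\|^{2(p-1)}\|\eta^n\|_2^2\,ds \\
\leq \mathbb{E}\|v^n(t_{i+1}^-\wedge\tau^N_n)\|^{2p} + p(2p-1)\int_{t_i\wedge\tau^N_n}^{t}\bigl[R_0\,\mathbb{E}\|\eta^n\|^{2(p-1)}+R_1\,\mathbb{E}\|\eta^n\|^{2p}\bigr]ds.
\end{eqnarray*}
The condition $R_2<\tfrac{2}{2p-1}$ makes the bracket on the left strictly positive, so in particular the $\|\eta^n\|_2^2$-integral can be dropped when proving the supremum bound. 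Young's inequality controls $\|\eta^n\|^{2(p-1)}$ by $1+\|\eta^n\|^{2p}$, and Gronwall yields
\begin{eqnarray*}
\sup_{t\in[t_i\wedge\tau^N_n,\,t_{i+1}\wedge\tau^N_n)}\mathbb{E}\|\eta^n(t)\|^{2p} \leq \Bigl(\mathbb{E}\|v^n(t_{i+1}^-\wedge\tau^N_n)\|^{2p}+\tfrac{C}{n}\Bigr)e^{C/n}.
\end{eqnarray*}

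Chaining this with the $v^n$-estimate produces the recursion $a_{i+1}\leq (a_i+\tfrac{C}{n})e^{(2pC_1N+C)/n}$ for $a_i:=\mathbb{E}\|\eta^n(t_i^-\wedge\tau^N_n)\|^{2p}$, and an induction on $i\in\{0,\dots,n-1\}$ exactly as in \eqref{qq-55} gives a bound of the form $\mathbb{E}\|v_0\|^{2p}\,e^{\tilde r_5}+\tfrac{\tilde r_6}{\tilde r_5}e^{2\tilde r_5}$ with $\tilde r_5=2pC_1 N+CT$, which is $\leq C\tilde K(N)$. The integral bounds on $\|v^n\|^2_2\|v^n\|^{2(p-1)}$ and, when $\varepsilon>0$, on $\|\eta^n\|^2_2\|\eta^n\|^{2(p-1)}$ follow by summing the $i$-th interval inequalities from $i=0$ to $n-1$, exactly as in the last step of Lemma \ref{lem-3}. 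The main obstacle I anticipate is bookkeeping the It\^o correction at the $2p$-th power level and verifying that the coefficient condition $R_2<\tfrac{2}{2p-1}$ (rather than $R_2<2$) is exactly what is needed to preserve positivity of the $\|\eta^n\|_2^2$-coefficient — but this is the direct analogue of the constraint $K_2<\tfrac{2}{2p-1}$ in Lemma \ref{lem-4}, and no new analytic ingredient beyond Hypothesis C and the nonlinear estimate of Lemma \ref{lem-8} is required.
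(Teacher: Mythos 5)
Your proposal is correct and follows exactly the route the paper intends: the paper states Lemma \ref{lem-9} without proof as the higher-moment analogue of Lemma \ref{lem-8}, and your argument combines the integrating-factor/stopping-time device from Lemma \ref{lem-8} with the $2p$-th moment It\^{o} bookkeeping of Lemma \ref{lem-4}, correctly identifying the coefficient $p(2p-1)$ and hence the condition $R_2<\frac{2}{2p-1}$.
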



Up to now, we are ready to obtain an upper bound of the $V-$norm of the difference between $v^n$ and $\eta^n$.
\begin{prp}\label{prp-2}
Let $v_0\in V$ be $\mathcal{F}_0-$measurable random variable. Fix $\varepsilon\in [0,1)$. Assume \textbf{Hypotheses A, B, C} hold with $K_2<\frac{2}{3},\  K_4< 2$ and $L_2<\frac{2}{3},\  R_2<\frac{2}{3}$, there exists a positive constant $C$ such that for any $n\in \mathbb{N}$,
\begin{eqnarray}\label{qq-56}
\mathbb{E}\int^{T\wedge \tau^N_n}_0\|v^n(t)-\eta^n(t)\|^2dt\leq \frac{C(T)\tilde{K}(N)}{n}.
\end{eqnarray}
\end{prp}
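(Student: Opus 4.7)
The proof mirrors the structure of Proposition \ref{prp-1}, but now in the $V$-norm, and so must draw on three extra ingredients: the gradient noise estimate of \textbf{Hypothesis C}, the stopping-time bounds from Lemmas \ref{lem-8}--\ref{lem-9}, and a more delicate handling of the nonlinear term. The starting point is the identity obtained by subtracting (\ref{qq-4}) from (\ref{qq-5}): for $t\in[t_i,t_{i+1})$,
\begin{eqnarray*}
\eta^n(t)-v^n(t)=-\int_t^{d^*_n(t)}F_\varepsilon(s,v^n(s))\md s-\varepsilon\int_{d_n(t)}^t A\eta^n(s)\md s+\int_{d_n(t)}^t\psi(s,\eta^n(s))\md W(s).
\end{eqnarray*}
As in Proposition \ref{prp-1}, I would split the argument into the two cases $\varepsilon=0$ and $\varepsilon\in(0,1)$.

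For Case 1 ($\varepsilon=0$), decompose $\eta^n(t)-v^n(t)=[\eta^n(t)-v^n(d^*_n(t)^-)]+[v^n(d^*_n(t)^-)-v^n(t)]$. The first piece is the pure $V$-valued stochastic integral $\int_{d_n(t)}^t\psi\md W$; the It\^o isometry in $V$, \textbf{Hypothesis C} (with its $\varepsilon$-free part active), Fubini, and the $V$-norm control in Lemma \ref{lem-8} give a contribution of order $CT\tilde{K}(N)/n$. For the deterministic piece, apply the chain rule to $\|v^n(s)-v^n(t)\|^2$ along (\ref{qq-1}) to get
\begin{eqnarray*}
\|v^n(d^*_n(t)^-)-v^n(t)\|^2=-2\int_t^{d^*_n(t)}\langle F_0(s,v^n(s)),A(v^n(s)-v^n(t))\rangle\md s.
\end{eqnarray*}
The linear $Av^n$-part is absorbed by Young's inequality into $\tfrac12\|v^n(s)\|_2^2$ plus $\|v^n(t)\|_2^2$, whose $\md t$-integral over $[0,T\wedge\tau^N_n]$ is controlled by Lemma \ref{lem-8}. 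The nonlinear part uses the same bound $|\langle B(v^n,v^n),Aw\rangle|\le C|Aw|(|v^n|^{1/2}\|v^n\|\|v^n\|_2^{1/2}+\|v^n\||r^n|^{1/2}\|r^n\|^{1/2})$ already invoked in the proof of Lemma \ref{lem-8}, producing integrands of the form $(|v^n|^2\|v^n\|^2+|r^n|\|r^n\|)\|v^n\|^2$, which the stopping time $\tau^N_n$ bounds by $N/n$ times $\|v^n\|^2$, and Lemma \ref{lem-9} makes integrable. Putting these together yields $CT\tilde{K}(N)/n$.

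For Case 2 ($\varepsilon\in(0,1)$), apply It\^o's formula to $\|\eta^n(t)-v^n(t)\|^2$ (equivalently, square the three-term decomposition and use the adaptedness of the drift factors to annihilate the martingale cross-terms) and integrate in $t$ to obtain
\begin{eqnarray*}
\mathbb{E}\int_0^{T\wedge\tau^N_n}\|\eta^n(t)-v^n(t)\|^2\md t=\tilde J_1+\tilde J_2+\tilde J_3+\tilde J_4,
\end{eqnarray*}
where $\tilde J_i$ are the $V$-analogues of the $J_i$ from the proof of Proposition \ref{prp-1}: $(\cdot,\cdot)$ is replaced by $((\cdot,\cdot))$ and $\|\psi\|_{\mathcal{L}_2(U;H)}^2$ by $\|\psi\|_{\mathcal{L}_2(U;V)}^2$. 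The three terms $\tilde J_1$, $\tilde J_3$, $\tilde J_4$ are bounded by $C(1-\varepsilon)T\tilde{K}(N)/n$, $C\varepsilon T\tilde{K}(N)/n$, and $CT\tilde{K}(N)/n$ respectively, via Cauchy--Schwarz, \textbf{Hypothesis C}, and Lemmas \ref{lem-8}--\ref{lem-9} (for $\|v^n\|_2$ and $\|\eta^n\|_2$). The nonlinear term $\tilde J_2$ is treated by writing $((B(v^n,v^n),\eta^n-v^n))=\langle B(v^n,v^n),A(\eta^n-v^n)\rangle$ and applying (\ref{ee-3})--(\ref{ee-4}) together with the same interpolations as in Case 1; every resulting factor is integrable over $[0,T\wedge\tau^N_n]$ by Lemmas \ref{lem-8} and \ref{lem-9}.

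The principal obstacle is $\tilde J_2$ and its deterministic counterpart in Case 1: estimating $B$ in the $V$-inner product places a full $A$ on the test function, and the primitive-equation form (\ref{ee-3}) inserts additional factors of $|\partial_z v^n|$ and higher powers of $\|v^n\|$ than appear in the Navier--Stokes analysis of \cite{B-B}. The stopping time $\tau^N_n$ is tailored precisely to bound, at cost $\tilde{K}(N)=N^{-1}e^{C(T)N}$, the otherwise non-uniformly-integrable quantities $\int|v^n|^2\|v^n\|^2\md s$ and $\int|r^n|\|r^n\|\md s$ appearing in the exponential weight of Lemma \ref{lem-8}; once these are controlled and combined with the fourth-moment bound of Lemma \ref{lem-9}, all terms close to yield (\ref{qq-56}).
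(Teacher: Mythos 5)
Your proposal is correct and follows essentially the same route as the paper: the same two-case split, the same decomposition $\eta^n(t)-v^n(t)=[\eta^n(t)-v^n(d^*_n(t)^-)]+[v^n(d^*_n(t)^-)-v^n(t)]$ for $\varepsilon=0$ with the It\^o isometry in $V$ plus Hypothesis C for the stochastic piece and the chain rule along (\ref{qq-1}) for the deterministic piece, and for $\varepsilon\in(0,1)$ the same It\^o expansion of $\|\eta^n-v^n\|^2$ into the four $V$-analogues of the $J_i$, with the nonlinear term controlled by putting $A$ on the test function, interpolating to produce $(|v^n|^2\|v^n\|^2+|r^n|\|r^n\|)\|v^n\|^2$, and closing via the stopping time $\tau^N_n$ and Lemmas \ref{lem-8}--\ref{lem-9}. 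No substantive differences from the paper's argument.
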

\begin{proof}
\textbf{Case 1:} $\varepsilon =0$. For any $t\in [0,T\wedge\tau^N_n)$, by (\ref{qq-2}) and \textbf{Hypothesis C}, we have
\begin{eqnarray*}
\mathbb{E}\|\eta^n(t)-v^n(d^*_n(t))\|^2=\mathbb{E}\int^t_{d_n(t)}\|\psi(s,\eta^n(s))\|^2_{\mathcal{L}_2(U;V)}ds\leq \mathbb{E}\int^t_{d_n(t)}(R_0+ R_1\|\eta^n(s)\|^2)ds.
\end{eqnarray*}
Then, by Fubini's theorem and Lemma \ref{lem-3}, we obtain
\begin{eqnarray}\label{qq-57}
\mathbb{E}\int^{T\wedge\tau^N_n}_0\|\eta^n(t)-v^n(d^*_n(t))\|^2dt\leq C\mathbb{E}\int^{T\wedge\tau^N_n}_0(1+\|\eta^n(s)\|^2)\left(\int^{d^*_n(s)}_sdt\right)ds\leq C\frac{T}{n}.
\end{eqnarray}
From (\ref{qq-1}), we have
\begin{eqnarray*}
\|v^n(d^*_n(t)^-)-v^n(t)\|^2=2\int^{d^*_n(t)}_t \langle A( v^n(s)-v^n(t)), dv^n(s)\rangle=\sum^2_{i=1}I_i(t),
\end{eqnarray*}
where
\begin{eqnarray*}
I_1(t)&=&-2(1-\varepsilon)\int^{d^*_n(t)}_t\langle A(v^n(s)-v^n(t)), Av^n(s)\rangle ds,\\
I_2(t)&=&-2\int^{d^*_n(t)}_t\langle A(v^n(s)-v^n(t)), B(v^n(s),v^n(s))\rangle ds,
\end{eqnarray*}

Using Lemma \ref{lem-8} and the Young inequality, we have
\begin{eqnarray*}
\Big|\mathbb{E}\int^{T\wedge\tau^N_n}_0 I_1(t)dt\Big|
&=&\Big|(1-\varepsilon)\mathbb{E}\int^{T\wedge\tau^N_n}_0\int^{d^*_n(t)}_t(-2|Av^n(s)|^2+2|Av^n(s)||Av^n(t)|)dsdt\Big|\\
&\leq& \Big|(1-\varepsilon)\mathbb{E}\int^{T\wedge\tau^N_n}_0\int^{d^*_n(t)}_t(-2|Av^n(s)|^2+2|Av^n(s)|^2+\frac{1}{2}|Av^n(t)|^2)dsdt\Big|\\
&\leq&  \frac{1-\varepsilon}{2}\mathbb{E}\int^{T\wedge\tau^N_n}_0|Av^n(t)|^2\left(\int^{d^*_n(t)}_t ds\right)dt\leq \frac{C\tilde{K}(N)T}{n}.
\end{eqnarray*}
Using H\"{o}lder inequality, interpolation inequality and the Young inequality, we obtain
\begin{eqnarray*}
|\langle Av^n(s), B(v^n(s),v^n(s))\rangle|
&\leq& \frac{1}{4}|Av^n(s)|^2+C(|v^n|^2\|v^n\|^2+|r^n|\|r^n\|)\|v^n\|^2,\\
|\langle Av^n(t), B(v^n(s),v^n(s))\rangle|
 &\leq& \frac{1}{4}|Av^n(t)|^2+\frac{1}{4}|Av^n(s)|^2+C(|v^n|^2\|v^n\|^2+|r^n|\|r^n\|)\|v^n\|^2,
\end{eqnarray*}

Hence, by Lemmas \ref{lem-3}, \ref{lem-6}, \ref{lem-8} and \ref{lem-9}, we deduce that
\begin{eqnarray*}
\Big|\mathbb{E}\int^{{T\wedge \tau^N_n}}_0I_2(t)dt\Big|
&\leq&  \frac{1}{2}\mathbb{E}\int^{T\wedge \tau^N_n}_0|Av^n(s)|^2\left(\int^{s}_{d_n(s)} dt\right)ds+\frac{1}{4}\Big|\mathbb{E}\int^{T\wedge \tau^N_n}_0|Av^n(t)|^2\left(\int^{d^*_n(t)}_tds\right)dt\Big|\\
&&\ +C\Big|\mathbb{E}\int^{T\wedge \tau^N_n}_0(|v^n|^2\|v^n\|^4+|r^n|^2\|r^n\|^2+\|v^n\|^4)\left(\int^{s}_{d_n(s)} dt\right)ds\Big|\\
&\leq&\frac{C\tilde{K}(N)T}{n}.
\end{eqnarray*}
Therefore, based on the above, we conclude that (\ref{qq-56}) holds when $\varepsilon =0$.

\textbf{Case 2:}  $\varepsilon\in (0,1)$. We have
\begin{eqnarray*}
\mathbb{E}\int^{T\wedge \tau^N_n}_0\|\eta^n(t)-v^n(t)\|^2dt
&=&-2(1-\varepsilon)\mathbb{E}\int^{T\wedge \tau^N_n}_0\int^{d^*_n(t)}_t\langle Av^n(s), A(\eta^n(s)-v^n(s))\rangle dsdt\\
&&-2\mathbb{E}\int^{T\wedge \tau^N_n}_0\int^{d^*_n(t)}_t\langle B(v^n(s), v^n(s)), A(\eta^n(s)-v^n(s))\rangle dsdt\\
&&-2\varepsilon\mathbb{E}\int^{T\wedge \tau^N_n}_0\int^{t}_{d_n(t)}\langle A(\eta^n(s)), A(\eta^n(s)-v^n(s))\rangle dsdt\\
&&+\mathbb{E}\int^{T\wedge \tau^N_n}_0\int^{t}_{d_n(t)}\|\psi(s,\eta^n(s))\|^2_{\mathcal{L}_2(U;V)}dsdt\\
&:=& J_1+J_2+J_3+J_4.
\end{eqnarray*}
Note that
\begin{eqnarray}\notag
\langle Au, A(y-u)\rangle&=&\langle A(u-y), A(y-u)\rangle+\langle Ay, A(y-u)\rangle\\
\label{qq-61}
&&=-|A(y-u)|^2+\langle Ay, A(y-u)\rangle\leq \langle Ay, A(y-u)\rangle;\\
\notag
2\langle Au, A(y-u)\rangle&\leq& \langle Au, A(y-u)\rangle+\langle Ay, A(y-u)\rangle\\
\label{qq-62}
&=&\langle A(y+u), A(y-u)\rangle\leq\langle Ay, Ay\rangle.
\end{eqnarray}
By (\ref{qq-62}) and Fubini theorem, we have
\begin{eqnarray}\notag
 J_1&\leq& (1-\varepsilon)\mathbb{E}\int^{T\wedge \tau^N_n}_0|A\eta^n(s)|^2\left(\int^{s}_{d_n(s)}dt\right)ds\\
\label{qq-67}
&\leq& \frac{(1-\varepsilon)T}{n}\mathbb{E}\int^{T\wedge \tau^N_n}_0|A\eta^n(s)|^2ds\leq\frac{C\tilde{K}(N)(1-\varepsilon)T}{n}.
\end{eqnarray}
Similar to the above, we have
\begin{eqnarray*}
 J_2\leq C\mathbb{E}\int^{T\wedge \tau^N_n}_0\int^{d^*_n(t)}_t(|Av^n(s)|^2+|A\eta^n(s)|^2+|v^n|^2\|v^n\|^4
+|r^n|^2\|r^n\|^2+\|v^n\|^4)dsdt.
\end{eqnarray*}
With the help of Fubini theorem, Lemma \ref{lem-4}, we get
\begin{eqnarray}\notag
 J_2&\leq& C\mathbb{E}\int^{T\wedge \tau^N_n}_0(|Av^n(s)|^2+|A\eta^n(s)|^2+|v^n|^2\|v^n\|^4
+|r^n|^2\|r^n\|^2+\|v^n\|^4)\left(\int^{s}_{d_n(s)}dt\right) ds\\
\label{qq-66}
&\leq& \frac{C\tilde{K}(N)T}{n}.
\end{eqnarray}
From Lemma \ref{lem-8}, it's easy to obtain
\begin{eqnarray}\notag
 J_3&\leq&  C\varepsilon\mathbb{E}\int^{T\wedge \tau^N_n}_0\int^{t}_{d_n(t)}(|Av^n(s)|^2+|A\eta^n(s)|^2)dsdt\\ \notag
&\leq& C\mathbb{E}\int^{T\wedge \tau^N_n}_0(|Av^n(s)|^2+|A\eta^n(s)|^2)\left(\int^{s}_{d_n(s)}dt\right) ds\\
\label{qq-65}
&\leq& \frac{C\tilde{K}(N)T}{n}.
\end{eqnarray}
We deduce from \textbf{Hypothesis C}, Lemma \ref{lem-3} and Lemma \ref{lem-8} that
\begin{eqnarray}\notag
 J_4&\leq&  \mathbb{E}\int^{T\wedge \tau^N_n}_0\int^{t}_{d_n(t)}(R_0+R_1\|\eta^n(s)\|^2+\varepsilon R_2|A\eta^n(s)|^2)dsdt\\ \notag
&\leq& C \frac{R_0 T}{n}+C\mathbb{E}\int^{T\wedge \tau^N_n}_0(R_1\|\eta^n(s)\|^2+\varepsilon R_2|A\eta^n(s)|^2)\left(\int^{s}_{d_n(s)}dt\right) ds\\
\label{qq-68}
&\leq& \frac{C\tilde{K}(N)T}{n}.
\end{eqnarray}
Combining (\ref{qq-67})-(\ref{qq-68}), we complete the proof of (\ref{qq-56}) when $\varepsilon\in (0,1)$.

\end{proof}

\subsection{Auxiliary process}
For technical reasons, consider an auxiliary process $Z^n(t), t\in [0,T]$ defined by
\begin{eqnarray*}
Z^n(t)=v_0-\int^t_0F_\varepsilon(s,v^n(s))ds-\varepsilon\int^{d_n(t)}_0 A\eta^n(s)ds+\int^t_0\psi(s,\eta^n(s))dW(s).
\end{eqnarray*}
When $\varepsilon=0$, we have
\begin{eqnarray*}
Z^n(t_k)=\eta^n(t^-_k)=v^n(t^+_k)\quad {\rm{for}}\quad k=0,1,\cdot\cdot\cdot,n.
\end{eqnarray*}

The following lemma gives an estimate of the difference between $Z^n$ and $v^n$ in different topologies.
\begin{lemma}\label{lem-7}
Let $v_0\in V$ be $\mathcal{F}_0-$measurable random variable. Fix $\varepsilon\in [0,1)$.
\begin{description}
  \item[(i)]  Suppose that \textbf{Hypotheses A, B} hold with $K_2<\frac{2}{2p-1},\ K_4<2$ and $L_2<\frac{2}{3}, \ R_2<\frac{2}{2p-1}$. Then there exists a positive constant $C:=C(T,\varepsilon, \mathbb{E}|v_0|^{2p})$ such that for every integer $n\geq 1$,
\begin{eqnarray}\label{qq-35}
\sup_{t\in[0,T\wedge \tau^N_n]}\mathbb{E}|Z^n(t)-v^n(t)|^{2p}\leq \frac{C\tilde{K}(N)}{n^p}.
\end{eqnarray}
Moreover, if $L_2=0$, we obtain
\begin{eqnarray}\label{qq-80}
\sup_{t\in[0,T]}\mathbb{E}|\partial_z(Z^n(t)-v^n(t))|^{2p}\leq \frac{C}{n^p}.
\end{eqnarray}
  \item[(ii)] Assume that \textbf{Hypothesis A, B, C} hold with $K_2<\frac{2}{3},\ K_4<2$ and $L_2<\frac{2}{3},\ R_2<2$. Then there exists a positive constant $C:=C(T,\varepsilon,\mathbb{E}\|v_0\|^{2p})$ such that for every integer $n\geq 1$,
\begin{eqnarray*}\label{qq-36}
\mathbb{E}\int^{T\wedge \tau^N_n}_0\|Z^n(t)-v^n(t)\|^{2}dt\leq \frac{C\tilde{K}(N)}{n}.
\end{eqnarray*}
Moreover, if $L_2<\frac{2}{2p-1}$ and $ R_2=0$, we have
\begin{eqnarray*}\label{qq-41}
\sup_{t\in[0,T\wedge \tau^N_n]}\mathbb{E}\|Z^n(t)-v^n(t)\|^{2p}\leq \frac{C\tilde{K}(N)}{n^p}.
\end{eqnarray*}
\end{description}
\end{lemma}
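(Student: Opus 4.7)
The key observation is that subtracting (\ref{qq-4}) from the definition of $Z^n$ gives the clean identity
$$Z^n(t) - v^n(t) = \int_{d_n(t)}^{t} \psi(s, \eta^n(s)) \, dW(s),$$
since the deterministic drift $-\int_0^t F_\varepsilon(s,v^n(s))ds$ and the stopped viscosity correction $-\varepsilon\int_0^{d_n(t)} A\eta^n(s)ds$ are common to both processes. Thus $Z^n - v^n$ is a martingale increment over an interval of length at most $T/n$, and all four estimates in the lemma reduce to applying the Burkholder-Davis-Gundy (BDG) inequality together with Hypotheses A, B, C and the moment bounds from Lemmas \ref{lem-4}, \ref{lem-6}, \ref{lem-8}, \ref{lem-9}. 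The entire nonlinearity disappears from the comparison, so the quadratic terms in $v^n$ that caused headaches in Propositions \ref{prp-1}-\ref{prp-2} do not reappear here.

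For part (i), BDG applied at a fixed $t$ gives
$$\mathbb{E}|Z^n(t)-v^n(t)|^{2p} \leq C_p \mathbb{E}\Bigl(\int_{d_n(t)}^t \|\psi(s,\eta^n(s))\|^2_{\mathcal{L}_2(U;H)}\,ds\Bigr)^p.$$
Hölder's inequality with the length bound $t - d_n(t) \leq T/n$ extracts a factor $(T/n)^{p-1}$, and Hypothesis A dominates the integrand by $C(K_0^p + K_1^p|\eta^n|^{2p} + \varepsilon^p K_2^p \|\eta^n\|^{2p})$. Lemma \ref{lem-4} controls the $|\eta^n|^{2p}$ term uniformly in $n$, while Lemma \ref{lem-9} on the stopped interval $[0,T\wedge\tau^N_n]$ controls the $\|\eta^n\|^{2p}$ term and introduces the factor $\tilde{K}(N)$; a further factor $T/n$ from integrating in $s$ gives the overall rate $\tilde{K}(N)/n^p$. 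The $\partial_z$ version with $L_2=0$ proceeds analogously: commuting $\partial_z$ under the stochastic integral yields $\partial_z(Z^n - v^n) = \int_{d_n(t)}^t \partial_z \psi(s,\eta^n)dW$, and Hypothesis B with $L_2=0$ eliminates the V-norm term so that only Lemma \ref{lem-6} is needed, producing the cleaner bound $C/n^p$ without any stopping dependence.

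For part (ii), the $p=1$ case uses the It\^o isometry
$$\mathbb{E}\|Z^n(t)-v^n(t)\|^2 = \mathbb{E}\int_{d_n(t)}^t \|\psi(s,\eta^n(s))\|^2_{\mathcal{L}_2(U;V)}\,ds,$$
and Hypothesis C bounds the right-hand side by $\mathbb{E}\int_{d_n(t)}^t (R_0 + R_1\|\eta^n\|^2 + \varepsilon R_2 |A\eta^n|^2)ds$. Integrating in $t\in[0,T\wedge\tau^N_n]$ and exchanging the order by Fubini converts $\int_0^{T\wedge\tau^N_n}\int_{d_n(t)}^t ds\,dt$ into $\int_0^{T\wedge\tau^N_n}(s - d_n(s))ds$, yielding the needed factor $T/n$; Lemma \ref{lem-8} then controls the remaining $\eta^n$ integrals up to $C\tilde{K}(N)$ (using $\|\eta^n\|_2^2 \simeq |A\eta^n|^2$ from the Stokes regularity noted after the definition of $D(A)$). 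For the sup-of-moments version with $R_2 = 0$, the same BDG plus Hölder at a fixed $t$ gives $\mathbb{E}\|Z^n(t)-v^n(t)\|^{2p} \leq C_p (T/n)^{p-1}\mathbb{E}\int_{d_n(t)}^t (R_0^p + R_1^p \|\eta^n\|^{2p})ds$, and the bound $\sup_s \mathbb{E}\|\eta^n(s)\|^{2p} \leq C\tilde{K}(N)$ from Lemma \ref{lem-9} contributes the extra factor $T/n$ to produce the rate $\tilde{K}(N)/n^p$.

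The only nontrivial bookkeeping is verifying that the exponents $K_2 < \tfrac{2}{2p-1}$, $L_2$, $R_2$ in each sub-claim are exactly what is required to invoke the correct moment version of Lemmas \ref{lem-4}-\ref{lem-9} (in particular, matching the power $p$ in BDG to the power in the a priori bounds); beyond this, the proof is essentially automatic once the cancellation identity $Z^n(t) - v^n(t) = \int_{d_n(t)}^t \psi(s,\eta^n)dW$ is in hand.
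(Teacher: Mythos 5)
Your proposal is correct and follows essentially the same route as the paper: both start from the cancellation identity $Z^n(t)-v^n(t)=\int_{d_n(t)}^{t}\psi(s,\eta^n(s))\,dW(s)$, then apply the Burkholder--Davis--Gundy inequality (or the It\^o isometry for the $p=1$ case), H\"older's inequality to extract the factor $(T/n)^{p-1}$, the growth conditions in Hypotheses A, B, C, and the a priori moment bounds of Lemmas \ref{lem-4}, \ref{lem-6}, \ref{lem-8}, \ref{lem-9} (the last two supplying the factor $\tilde{K}(N)$ on the stopped interval). The bookkeeping of which lemma is invoked for each term matches the paper's proof exactly.
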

\begin{proof}
 For $t\in[0,T\wedge \tau^N_n]$, we have
\[
Z^n(t)-v^n(t)=\int^t_{d_n(t)}\psi(s,\eta^n(s))dW(s).
\]
\textbf{(i)}\ Applying the Burkholder-Davies-Gundy inequality, \textbf{Hypothesis A}, Lemma \ref{lem-4} and Lemma \ref{lem-9}, we obtain
\begin{eqnarray*}
\mathbb{E}|Z^n(t)-v^n(t)|^{2p}&\leq& C_p\mathbb{E}|\int^t_{d_n(t)}\|\psi(s,\eta^n(s))\|^2_{\mathcal{L}_2(U;H)}ds|^p\\
&\leq& C_p(\frac{T}{n})^{p-1}\mathbb{E}|\int^t_{d_n(t)}|K_0+K_1|\eta^n(s)|^2+\varepsilon K_2\|\eta^n(s)\|^2|^pds\\
&\leq& \frac{C_p(T)}{n^p}\left(K^p_0+K^p_1\sup_{t\in[0,T\wedge \tau^N_n]}\mathbb{E}|\eta^n(t)|^{2p}+\varepsilon^p K^p_2\sup_{t\in[0,T\wedge \tau^N_n]}\mathbb{E}\|\eta^n(t)\|^{2p}\right)\\
&\leq& \frac{C_p(T)\tilde{K}(N)}{n^p}.
\end{eqnarray*}
Note that
\[
\partial_z(Z^n(t)-v^n(t))=\int^t_{d_n(t)}\partial_z\psi(s,\eta^n(s))dW(s).
\]
When $L_2=0$, using \textbf{Hypothesis B} and Lemma \ref{lem-6}, we deduce that
\begin{eqnarray*}
\mathbb{E}|\partial_z(Z^n(t)-v^n(t))|^{2p}&\leq& C_p\mathbb{E}|\int^t_{d_n(t)}\|\partial_z\psi(s,\eta^n(s))\|^2_{\mathcal{L}_2(U;H)}ds|^p\\
&\leq& C_p(\frac{T}{n})^{p-1}\mathbb{E}|\int^t_{d_n(t)}|L_0+L_1|\partial_z\eta^n(s)|^2|^pds\\
&\leq& \frac{C_p(T)}{n^p}(L^p_0+L^p_1\sup_{t\in[0,T]}\mathbb{E}|\partial_z\eta^n(t)|^{2p})\leq \frac{C_p(T)}{n^p}.
\end{eqnarray*}

\textbf{(ii)} \  With the aid of \textbf{Hypothesis C}, the Burkholder-Davies-Gundy inequality, the Fubini's theorem and Lemmas \ref{lem-3}, \ref{lem-8}, we get
\begin{eqnarray*}
\mathbb{E}\int^{T\wedge \tau^N_n}_0\|Z^n(t)-v^n(t)\|^{2}dt
&\leq&\int^{T\wedge \tau^N_n}_0\mathbb{E}\int^t_{d_n(t)}\|\psi(s,\eta^n(s))\|^2_{\mathcal{L}_2(U;V)}dsdt\\
&\leq&\mathbb{E}\int^T_0(R_0+R_1\|\eta^n(s)\|^2+\varepsilon R_2|A\eta^n(s)|^2)\Big(\int^{d^*_n(s)}_sdt\Big)ds\\
&\leq&\frac{T}{n}[R_0 T+R_1\mathbb{E}\int^{T\wedge \tau^N_n}_0\|\eta^n(s)\|^2ds+\varepsilon R_2\mathbb{E}\int^{T\wedge \tau^N_n}_0|A\eta^n(s)|^2ds]\\
&\leq&\frac{C(T)\tilde{K}(N)}{n}.
\end{eqnarray*}
If $R_2=0$, by \textbf{Hypothesis C} and Lemma \ref{lem-9}, it gives
\begin{eqnarray*}
\mathbb{E}\|Z^n(t)-v^n(t)\|^{2p}&\leq& C_p\mathbb{E}|\int^t_{d_n(t)}\|\psi(s,\eta^n(s))\|^2_{\mathcal{L}_2(U;V)}ds|^p\\
&\leq& C_p(\frac{T}{n})^{p-1}\mathbb{E}|\int^t_{d_n(t)}|R_0+R_1\|\eta^n(s)\|^2|^pds\\
&\leq& \frac{C_p(T)}{n^p}(R^p_0+R^p_1\sup_{t\in[0,{T\wedge \tau^N_n}]}\mathbb{E}\|\eta^n(t)\|^{2p})\\
&\leq&\frac{C_p(T)\tilde{K}(N)}{n^p}.
\end{eqnarray*}
\end{proof}

From Propositions \ref{prp-1}, \ref{prp-2}  and Lemma \ref{lem-7}, we deduce that
\begin{cor}\label{cor-2}
There exists a positive constant $C:=C(T,\varepsilon)$ such that for every integer $n\geq 1$,
\begin{eqnarray*}
\mathbb{E}\int^{T\wedge \tau^N_n}_0|Z^n(t)-\eta^n(t)|^{2}dt&\leq& \frac{C\tilde{K}(N)}{n},\\
\mathbb{E}\int^{T\wedge \tau^N_n}_0\|Z^n(t)-\eta^n(t)\|^{2}dt&\leq& \frac{C\tilde{K}(N)}{n}.
\end{eqnarray*}
\end{cor}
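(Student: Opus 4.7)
The plan is that Corollary 4.13 is a direct consequence of the triangle inequality combined with the $H$- and $V$-norm estimates already established in Propositions \ref{prp-1} and \ref{prp-2} together with the estimates of $Z^n-v^n$ from Lemma \ref{lem-7}. There is no substantive new analytical input required; the corollary simply assembles the pieces.

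Concretely, I would write $Z^n(t)-\eta^n(t)=[Z^n(t)-v^n(t)]+[v^n(t)-\eta^n(t)]$ and apply the elementary inequality $|a+b|^2\leq 2|a|^2+2|b|^2$ (respectively $\|a+b\|^2\leq 2\|a\|^2+2\|b\|^2$) to obtain
\begin{eqnarray*}
\mathbb{E}\int^{T\wedge\tau^N_n}_0|Z^n(t)-\eta^n(t)|^2\,dt &\leq& 2\,\mathbb{E}\int^{T\wedge\tau^N_n}_0|Z^n(t)-v^n(t)|^2\,dt+2\,\mathbb{E}\int^{T\wedge\tau^N_n}_0|v^n(t)-\eta^n(t)|^2\,dt,
\end{eqnarray*}
and the analogous bound in $V$-norm. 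For the $H$-norm: the second term is bounded by $\frac{CT}{n}$ by Proposition \ref{prp-1} (the bound is over $[0,T]$ and restricts trivially to the smaller interval $[0,T\wedge\tau^N_n]$); for the first term, Lemma \ref{lem-7}(i) with $p=1$ gives $\sup_{t\in[0,T\wedge\tau^N_n]}\mathbb{E}|Z^n(t)-v^n(t)|^2\leq \frac{C\tilde{K}(N)}{n}$, and integration in $t$ over an interval of length at most $T$ yields $\frac{CT\tilde{K}(N)}{n}$. Since $\tilde{K}(N)\geq 1$, the sum is controlled by $\frac{C\tilde{K}(N)}{n}$.

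For the $V$-norm statement the mechanism is identical but more direct: Proposition \ref{prp-2} already gives $\mathbb{E}\int^{T\wedge\tau^N_n}_0\|v^n(t)-\eta^n(t)\|^2\,dt\leq \frac{C\tilde{K}(N)}{n}$, and the first part of Lemma \ref{lem-7}(ii) provides the matching bound $\mathbb{E}\int^{T\wedge\tau^N_n}_0\|Z^n(t)-v^n(t)\|^2\,dt\leq \frac{C\tilde{K}(N)}{n}$ in integrated form, so no supremum-versus-integral juggling is needed.

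Since every ingredient is already in hand and the remaining manipulation is the triangle inequality, I do not anticipate any genuine obstacle. The only minor care is to verify that the hypotheses on $K_2,K_4,L_2,R_2$ required by Propositions \ref{prp-1} and \ref{prp-2} together with Lemma \ref{lem-7} are compatible; this is immediate since the assumptions $K_2<\frac{2}{3}$, $K_4<2$, $L_2<\frac{2}{3}$, $R_2<\frac{2}{3}$ imply all the sufficient conditions invoked (in particular those of Lemma \ref{lem-7} with $p=1$).
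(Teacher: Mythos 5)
Your proof is correct and is exactly the argument the paper intends: the paper offers no written proof beyond "From Propositions \ref{prp-1}, \ref{prp-2} and Lemma \ref{lem-7}, we deduce that", and your decomposition $Z^n-\eta^n=(Z^n-v^n)+(v^n-\eta^n)$ with $|a+b|^2\le 2|a|^2+2|b|^2$ is precisely how those three results combine. The only detail worth noting is that Lemma \ref{lem-7}(i) gives a supremum-in-time bound for the $H$-norm term, so the extra factor of $T$ from integrating is harmless, as you observe.
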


\section{Speed of convergence}
In this section, we devote to prove Theorem \ref{thm-4}.

For the strong solution $ v$ of (\ref{equ-7}), $v^n$ of (\ref{qq-1}), $r^n$ of (\ref{qq-18}) and some $M>0$, define the stopping time
\[
\varsigma^M_n=\inf\Big\{t\in[0,T]: \int^t_0(\|v(s)\|+\|v^n(s)\|^2+|r^n|^4)ds> M\Big\}.
\]
Set $\tau:=\varsigma^M_n\wedge \tau^N_n$, where $\tau^N_n$ is defined by (\ref{eq-74}).

The following proposition states that the strong speed of convergence of $Z^n$ to $v$ (resp. $v^n$ and $\eta^n$ to $v$) in $L^{\infty}([0,T\wedge \tau];H)$ (resp. $L^{\infty}([0,T\wedge \tau];V)$ ) is $\frac{1}{2}$.
\begin{prp}\label{prp-8}
Let $v_0\in V$ be $\mathcal{F}_0$ measurable random variable. For any $\varepsilon\in [0,1)$, assume \textbf{Hypotheses A, B, C} hold with $K_2<\frac{2}{147}$, $L_2=R_2=0$ and $\varepsilon K_4$ strictly smaller than $2(1-\varepsilon)$, then there exists positive constant  $C(T)$ such that for every $M>0$ and $n\in\mathbb{N}$, we have
\begin{eqnarray}\label{qq-39}
\mathbb{E}\left(\sup_{t\in[0,T\wedge\tau]}|Z^n(t)-v(t)|^2+\int^{T\wedge\tau}_0\Big(\|v^n(t)-v(t)\|^2+\|\eta^n(t)-v(t)\|^2\Big)dt\right)\leq \frac{K(T,M,N)}{n},
\end{eqnarray}
where
\[
K(T,M,N)=C(T)\tilde{K}(N)\exp\{C(T){\rm{e}}^{C(b_0)M}\},\quad \tilde{K}(N)=\frac{1}{N}e^{C(T)N},\quad C(b_0)\ {\rm{is\ a \ positive\ constant}}.
\]
\end{prp}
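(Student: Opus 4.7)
The plan is to apply the It\^{o} formula to $|Z^n(t) - v(t)|^2$ on $[0, T\wedge\tau]$ and close via Gronwall's inequality, using the stopping time $\tau = \varsigma^M_n \wedge \tau^N_n$ to tame the nonlinear term. Writing $X^n := Z^n - v$ and noting $F_\varepsilon(v^n) = F(v^n) - \varepsilon A v^n$, the difference equation is
\begin{eqnarray*}
X^n(t) &=& -\int_0^t A(v^n - v)\,ds + \varepsilon\int_0^t A v^n\,ds - \int_0^t \big[B(v^n,v^n) - B(v,v)\big]\,ds\\
&&\ -\, \varepsilon\int_0^{d_n(t)} A\eta^n\,ds + \int_0^t \big[\psi(\eta^n) - \psi(v)\big]\,dW(s),
\end{eqnarray*}
so $X^n$ is right-continuous with deterministic jumps $\Delta X^n(t_k) = -\varepsilon\int_{t_{k-1}}^{t_k} A\eta^n\,ds$ at each interior grid point. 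The It\^{o} formula for semimartingales with jumps produces a good term $-2(1-\varepsilon)\int\|v^n-v\|^2\,ds$ together with drift, diffusion, martingale, and jump contributions to be bounded.

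Most of these terms are routinely dispatched using the a priori estimates of Section~4. The $\varepsilon\langle X^n, A v^n\rangle$ term is split via $X^n = (Z^n - v^n) + (v^n - v)$: Corollary \ref{cor-2} makes the $Z^n - v^n$ piece $O(\tilde K(N)/n)$, and the $v^n - v$ piece is absorbed into the good term. The diffusion correction splits $\psi(\eta^n) - \psi(v) = [\psi(\eta^n) - \psi(v^n)] + [\psi(v^n) - \psi(v)]$ and invokes \textbf{Hypothesis (A.2)} together with Propositions \ref{prp-1} and \ref{prp-2}, the $\varepsilon K_4\|v^n - v\|^2$ portion being absorbed thanks to $\varepsilon K_4 < 2(1-\varepsilon)$. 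For the jumps, the identities $Z^n(t_k) = v^n(t_k)$ and $Z^n(t_k^-) = v^n(t_k) + \varepsilon\int_{t_{k-1}}^{t_k} A\eta^n\,ds$ give
\[
|Z^n(t_k) - v(t_k)|^2 - |Z^n(t_k^-) - v(t_k)|^2 = -2\varepsilon\Big\langle v^n(t_k) - v(t_k), \int_{t_{k-1}}^{t_k} A\eta^n\,ds\Big\rangle - \varepsilon^2\Big|\int_{t_{k-1}}^{t_k} A\eta^n\,ds\Big|^2;
\]
combined with $|\int_{t_{k-1}}^{t_k} A\eta^n\,ds|^2 \leq (T/n)\int_{t_{k-1}}^{t_k}|A\eta^n|^2\,ds$, Lemma \ref{lem-8}, and Cauchy--Schwarz/Young estimates relating the discrete values $\|v^n(t_k) - v(t_k)\|^2$ to the continuous integral $\int\|v^n - v\|^2\,ds$ on preceding subintervals, this produces a jump contribution of order $\tilde K(N)/n$ plus a fraction of the good term.

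The heart of the argument is the nonlinear cross-term $-2\int\langle X^n, B(v^n, v^n) - B(v, v)\rangle\,ds$. Decompose $B(v^n, v^n) - B(v, v) = B(v^n - v, v^n) + B(v, v^n - v)$, use (\ref{ee-2}) to move $X^n$ into favourable slots, and apply (\ref{ee-3}) to obtain trilinear terms involving $\|v^n\|$, $|r^n|$, $\|v\|$ coupled with powers of $|v^n - v|$, $\|v^n - v\|$, $|X^n|$, $\|X^n\|$. Since only $\int_0^T\|v\|^2\,ds < \infty$ is available (no uniform $V$-bound on $v$), the decisive step is to write $\|X^n\| \leq \|Z^n - v^n\| + \|v^n - v\|$ and redistribute exponents via Young's inequality so that (i) every occurrence of $\|v^n - v\|$ has total exponent at most $2$, hence absorbable into the good term with a small prefactor; (ii) every occurrence of $\|v\|$ has exponent strictly less than $2$, so it can be paired against the stopping-time bound $\int_0^{T\wedge\varsigma^M_n}\|v\|\,ds \leq M$; and (iii) each residual $\|Z^n - v^n\|$ factor is pulled out by H\"{o}lder's inequality and bounded by $O(\tilde K(N)/n)$ via Corollary \ref{cor-2}. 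The multiplier of $|X^n|^2$ arising this way is an integrable density $\varphi(s)$ built from $\|v(s)\| + \|v^n(s)\|^2 + |r^n(s)|^4$ satisfying $\int_0^{T\wedge\tau}\varphi\,ds \leq C(M)$ by definition of $\tau$.

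To close, take $\sup_{t\in[0,T\wedge\tau]}$ and expectation, use the Burkholder--Davis--Gundy inequality on the martingale to absorb $\tfrac12\mathbb{E}\sup|X^n|^2$, and invoke Gronwall's inequality; the exponential coefficient $\int\varphi\,ds$, bounded by $C(M)$, produces the factor $\exp(C(T) e^{C(b_0) M})$ after iterating Gronwall once at the integral level and once at the supremum level. The $\int\|\eta^n - v\|^2\,ds$ bound then follows via $\|\eta^n - v\|^2 \leq 2\|\eta^n - v^n\|^2 + 2\|v^n - v\|^2$ and Proposition \ref{prp-2}. The main obstacle throughout is the power-counting in the nonlinear estimate: the absence of a uniform bound on $\mathbb{E}\sup_t\|v(t)\|^2$ forces $\|v\|$ to appear with exponent strictly below $2$ in every Young split, and the resulting bookkeeping is precisely what yields the restrictive threshold $K_2 < 2/147$ on the noise coefficient.
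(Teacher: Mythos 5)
Your proposal follows essentially the same route as the paper: It\^{o} formula applied to $|Z^n-v|^2$, extraction of the dissipative term $-2(1-\varepsilon)\int\|v^n-v\|^2\,ds$, the decomposition $B(v^n,v^n)-B(v,v)=B(v^n-v,v^n)+B(v,v^n-v)$ with (\ref{ee-2})--(\ref{ee-3}) and Young redistribution so that $\|v\|$ enters the Gronwall density with exponent at most one (paired with $\int_0^{\tau}\|v\|\,ds\leq M$), residual $Z^n-v^n$ terms controlled by Lemma \ref{lem-7} and Corollary \ref{cor-2}, and a two-level (stochastic) Gronwall closure giving $\exp\{C(T)e^{C(b_0)M}\}$; your explicit treatment of the jumps of $Z^n$ at the grid points is if anything more careful than the paper's handling of $J_2$, $J_3$. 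The only slight imprecision is attributing $K_2<\frac{2}{147}$ directly to the Young power-counting: that threshold actually enters through Theorem \ref{thm-3}, which needs $q=4$ admissible to guarantee $\mathbb{E}\sup_t|v(t)|^4$ and $\mathbb{E}\int_0^T|v|^2\|v\|^2\,dt$ finite for the estimate of $\mathbb{E}I(t)$.
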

\begin{proof}
Fix $M>0$ and $n\geq1$. Then for any $t\in[0,T]$, we have
\begin{eqnarray*}\notag
Z^n(t\wedge \tau)-v(t\wedge \tau)&=&-\int^{t\wedge \tau}_0[F_{\varepsilon}(s, v^n(s))-F(s, v(s))]ds-\varepsilon\int^{d_n(t\wedge \tau)}_0 A\eta^n(s)ds\\
&&\ +\int^{t\wedge \tau}_0[\psi(s, \eta^n(s))-\psi(s, v(s))]dW(s).
\end{eqnarray*}
Applying It\^{o} formula to $|Z^n(t\wedge \tau)-v(t\wedge \tau)|^2$, we get
\begin{eqnarray*}
|Z^n(t\wedge \tau)-v(t\wedge \tau)|^2=\sum^5_{i=1}J_i(t),
\end{eqnarray*}
where
\begin{eqnarray*}
J_1(t)&=&-2\int^{t\wedge \tau}_0\langle F_{\varepsilon}(s, v^n(s))-F_{\varepsilon}(s, v(s)), Z^n(s)-v(s) \rangle ds,\\
J_2(t)&=&-2\varepsilon\int^{d_n(t\wedge \tau)}_0\langle A\eta^n(s)-Av(s), Z^n(s)-v(s)\rangle ds,\\
J_3(t)&=&-2\varepsilon\int^{t\wedge \tau}_{d_n(t\wedge \tau)}\langle Av(s), Z^n(s)-v(s)\rangle ds,\\
J_4(t)&=&\int^{t\wedge \tau}_0 \|\psi(s,\eta^n(s))-\psi(s,v(s))\|^2_{\mathcal{L}_2(U;H)}ds,\\
J_5(t)&=&2\int^{t\wedge \tau}_0\langle [\psi(s,\eta^n(s))-\psi(s,v(s))]dW(s), Z^n(s)-v(s) \rangle.
\end{eqnarray*}
Using (\ref{ee-2}), $J_1(t)$ can be rewritten as
\begin{eqnarray*}
J_1(t)&=& -2(1-\varepsilon)\int^{t\wedge \tau}_0\langle A v^n(s)-Av(s), v^n(s)-v(s) \rangle ds\\
&&\ -2(1-\varepsilon)\int^{t\wedge \tau}_0\langle A v^n(s)-Av(s), Z^n(s)-v^n(s) \rangle ds\\
&&\ -2\int^{t\wedge \tau}_0\langle B(v^n(s)-v(s), v^n(s)), v^n(s)-v(s) \rangle ds\\
&&\
-2\int^{t\wedge \tau}_0\langle [B(v^n(s)-v(s), v^n(s))+B( v(s),v^n(s)-v(s))], Z^n(s)-v^n(s) \rangle ds\\
&:=& J_{1,1}(t)+J_{1,2}(t)+J_{1,3}(t)+J_{1,4}(t).
\end{eqnarray*}
Referring to Page 21-23 in \cite{B-B}, the following estimates hold:
\begin{eqnarray*}
J_{1,1}(t)&=&-2(1-\varepsilon)\int^{t\wedge \tau}_0\|v^n(s)-v(s)\|^2ds,\\
J_{1,2}(t)&\leq& b_0(1-\varepsilon)\int^{t\wedge \tau}_0\|v^n(s)-v(s)\|^2ds+\frac{1-\varepsilon}{b_0}\int^{t\wedge \tau}_0\|Z^n(s)-v^n(s)\|^2ds.
\end{eqnarray*}
Using (\ref{ee-2}) and the Young inequality, we obtain
\begin{eqnarray*}\notag
J_{1,3}(t)&\leq& 2\int^{t\wedge \tau}_0|\langle B(v^n(s)-v(s), v^n(s)), v^n-v(s) \rangle| ds\\
&\leq&2C\int^{t\wedge \tau}_0(\|v^n(s)\||v^n(s)-v(s)|\|v^n(s)-v(s)\|+|\partial_z v^n|\|v^n(s)-v(s)\|^{\frac{3}{2}}|v^n(s)-v(s)|^{\frac{1}{2}})ds\\
&\leq& b_0\int^{t\wedge \tau}_0\|v^n(s)-v(s)\|^2ds+C(b_0)\int^{t\wedge \tau}_0(\|v^n(s)\|^2+|\partial_z v^n|^4)|Z^n(s)-v(s)|^2ds\\
&&\ +C(b_0)\int^{t\wedge \tau}_0(\|v^n(s)\|^2+|\partial_z v^n|^4)|Z^n(s)-v^n(s)|^2ds.
\end{eqnarray*}
$J_{1,4}(t)$ can be rewritten as
\begin{eqnarray*}
J_{1,4}(t)&=&-2\int^{t\wedge \tau}_0\langle B(v^n(s)-v(s), v^n(s)), Z^n(s)-v^n(s) \rangle ds\\
&&\ -2\int^{t\wedge \tau}_0\langle B( v(s),v^n(s)-v(s)), Z^n(s)-v^n(s) \rangle ds\\
&:=& \tilde{J}_1(t)+\tilde{J}_2(t).
\end{eqnarray*}
Using (\ref{ee-3}) and the Young inequality, we get
\begin{eqnarray*}
\tilde{J}_1(t)&\leq&2\int^{t\wedge \tau}_0(\|v^n(s)\|^{\frac{3}{4}}|Z^n(s)-v^n(s)|^{\frac{1}{2}}\|Z^n(s)-v^n(s)\|^{\frac{1}{2}}\|v^n(s)\|^{\frac{1}{4}}|v^n(s)-v(s)|^{\frac{1}{2}}\|v^n(s)-v(s)\|^{\frac{1}{2}}\\
&&\ +|\partial_z v^n|\|v^n(s)-v(s)\||Z^n(s)-v^n(s)|^{\frac{1}{2}}\|Z^n(s)-v^n(s)\|^{\frac{1}{2}})ds\\
&\leq& C\int^{t\wedge \tau}_0(\|v^n(s)\|^{\frac{3}{2}}|Z^n(s)-v^n(s)|\|Z^n(s)-v^n(s)\|+\|v^n(s)\|^{\frac{1}{2}}|v^n(s)-v(s)|\|v^n(s)-v(s)\|\\
&&\ +\frac{b_0}{2}\|v^n(s)-v(s)\|^2+C(b_0)|\partial_z v^n|^4|Z^n(s)-v^n(s)|^2+C(b_0)\|Z^n(s)-v^n(s)\|)ds\\
&\leq& b_0\int^{t\wedge \tau}_0\|v^n(s)-v(s)\|^2ds+C(b_0)\int^{t\wedge \tau}_0\|v^n(s)\||Z^n(s)-v(s)|^2ds\\
&&\ +C(b_0)\int^{t\wedge \tau}_0(1+\|v^n(s)\|^3+|\partial_z v^n|^4)|Z^n(s)-v^n(s)|^2ds\\
&&\
+C(b_0)\int^{t\wedge \tau}_0\|Z^n(s)-v^n(s)\|^2ds.
\end{eqnarray*}
We deduce from (\ref{ee-3}) and (\ref{ee-2}) that
\begin{eqnarray*}
\tilde{J}_2(t)&\leq&2\int^{t\wedge \tau}_0|\langle B( v(s),Z^n(s)-v^n(s)), v^n(s)-v(s) \rangle |ds\\
&\leq&2\int^{t\wedge \tau}_0\|Z^n(s)-v^n(s)\||v(s)|^{\frac{1}{2}}\|v(s)\|^{\frac{1}{2}}|v^n(s)-v(s)|^{\frac{1}{2}}\|v^n(s)-v(s)\|^{\frac{1}{2}}ds\\
&&\ +2\int^{t\wedge \tau}_0|\partial_z (Z^n(s)-v^n(s))|\|v(s)\||v^n(s)-v(s)|^{\frac{1}{2}}\|v^n(s)-v(s)\|^{\frac{1}{2}}ds\\
&:=& \tilde{J}_{2,1}+\tilde{J}_{2,2}.
\end{eqnarray*}
Applying the Cauchy-Schwarz inequality and the Young inequality, we obtain
\begin{eqnarray*}
\tilde{J}_{2,1}(t)&\leq&\int^{t\wedge \tau}_0(b_0\|v^n(s)-v(s)\|^2+C|v(s)|\|v(s)\|^{\frac{1}{2}}\|Z^n(s)-v^n(s)\|^2+C(b_0)\|v(s)\||v^n(s)-v(s)|^2)ds\\
&\leq&b_0\int^{t\wedge \tau}_0\|v^n(s)-v(s)\|^2ds+C(b_0)\int^{t\wedge \tau}_0\|v(s)\||Z^n(s)-v(s)|^2ds\\
&&\ + C(b_0)\int^{t\wedge \tau}_0\|v(s)\||Z^n(s)-v^n(s)|^2ds+C\int^{t\wedge \tau}_0(|v(s)|^2+\|v(s)\|)\|Z^n(s)-v^n(s)\|^2ds.
\end{eqnarray*}
By the H\"{o}lder inequality and the Young inequality, we deduce that
\begin{eqnarray*}
\tilde{J}_{2,2}(t)
&\leq& \int^{t\wedge \tau}_0(|\partial_z (Z^n-v^n)|^2\|v(s)\|^{\frac{3}{2}}+\|v(s)\|^{\frac{1}{2}}|v^n(s)-v(s)|\|v^n(s)-v(s)\|)ds\\
&\leq& \int^{t\wedge \tau}_0(b_0\|v^n(s)-v(s)\|^2+|\partial_z (Z^n-v^n)|^2\|v(s)\|^{\frac{3}{2}}+C(b_0)\|v(s)\||v^n(s)-v(s)|^2)ds\\
&\leq& b_0\int^{t\wedge \tau}_0\|v^n(s)-v(s)\|^2ds+C(b_0)\int^{t\wedge \tau}_0(1+\|v(s)\|)|Z^n(s)-v(s)|^2ds\\
&&\ +C(b_0)\int^{t\wedge \tau}_0(1+\|v(s)\|)|Z^n(s)-v^n(s)|^2ds +C\int^{t\wedge \tau}_0|\partial_z (Z^n-v^n)|^2\|v(s)\|^{\frac{3}{2}}ds.
\end{eqnarray*}
Hence, we have
\begin{eqnarray*}\notag
J_{1,4}(t)&\leq&3 b_0\int^{t\wedge \tau}_0\|v^n(s)-v(s)\|^2ds+C(b_0)\int^{t\wedge \tau}_0(1+\|v(s)\|+\|v^n(s)\|)|Z^n(s)-v(s)|^2ds \\ \notag
&&\ +C(b_0)\int^{t\wedge \tau}_0(1+\|v^n(s)\|^3+|\partial_z v^n|^4+\|v(s)\|)|Z^n(s)-v^n(s)|^2ds\\
&&\ +C(b_0)\int^{t\wedge \tau}_0(1+|v(s)|^2+\|v(s)\|)\|Z^n(s)-v^n(s)\|^2ds\\
&&+C\int^{t\wedge \tau}_0\|v(s)\|^{\frac{3}{2}}|\partial_z (Z^n-v^n)|^2ds.
\end{eqnarray*}
Replacing $v$ by $v^n$, and using the Cauchy-Schwarz inequality and the Young inequality, we obtain
\begin{eqnarray*}\notag
J_2(t)&\leq& -2\varepsilon\int^{d_n(t\wedge \tau)}_0\| v^n(s)-v(s)\|^2 ds+2\varepsilon\int^{d_n(t\wedge \tau)}_0\| \eta^n(s)-v^n(s)\|\|Z^n(s)-v^n(s)\| ds\\ \notag
&& + 2\varepsilon\int^{d_n(t\wedge \tau)}_0\| v^n(s)-v(s)\|(\| \eta^n(s)-v^n(s)\|+\|Z^n(s)-v^n(s)\|) ds\\
&\leq& C(\varepsilon)\int^{d_n(t\wedge \tau)}_0\| \eta^n(s)-v^n(s)\|^2ds
 +C(\varepsilon)\int^{d_n(t\wedge \tau)}_0\| Z^n(s)-v^n(s)\|^2ds.
\end{eqnarray*}
We deduce from the Cauchy-Schwarz inequality and the Young inequality that
\begin{eqnarray*}\notag
J_3(t)&\leq&2\varepsilon\int^{t\wedge \tau}_{d_n(t\wedge \tau)}\|v(s)\|( \|Z^n(s)-v^n(s)\|+\|v^n(s)-v(s)\|) ds\\ \notag
&\leq&b_0\varepsilon\int^{t\wedge \tau}_{d_n(t\wedge \tau)}\|v^n(s)-v(s)\|^2ds+C(\varepsilon)\int^{t\wedge \tau}_{d_n(t\wedge \tau)}\|v^n(s)-v(s)\|\|Z^n(s)-v^n(s)\|ds\\ \notag
&&+C(\varepsilon)\int^{t\wedge \tau}_{d_n(t\wedge \tau)}(\|v^n(s)\|\|Z^n(s)-v^n(s)\|+\|v^n(s)\|\|v^n(s)-v(s)\|)ds\\ \notag
&\leq&2b_0\varepsilon\int^{t\wedge \tau}_{d_n(t\wedge \tau)}\|v^n(s)-v(s)\|^2ds+C(\varepsilon)\int^{t\wedge \tau}_{d_n(t\wedge \tau)}\|Z^n(s)-v^n(s)\|^2ds\\
&&+C(\varepsilon)\frac{T}{n}\sup_{{d_n(t\wedge \tau)}\leq s\leq{{t\wedge \tau}}}\|v^n(s)\|^2.
\end{eqnarray*}
Using \textbf{Hypothesis A}, we obtain
\begin{eqnarray*}\notag
J_4(t)&\leq&\int^{t\wedge \tau}_0 (K_3|\eta^n(s)-v(s)|^2+\varepsilon K_4\|\eta^n(s)-v(s)\|^2 )ds\\ \notag
&\leq& 2K_3 \int^{t\wedge \tau}_0|Z^n(s)-v(s)|^2ds+\varepsilon K_4 b_0\int^{t\wedge \tau}_0\|v^n(s)-v(s)\|^2ds\\
&&\ +2K_3 \int^{t\wedge \tau}_0|\eta^n(s)-Z^n(s)|^2ds+\varepsilon C\int^{t\wedge \tau}_0\|\eta^n(s)-v^n(s)\|^2ds.
\end{eqnarray*}
Choosing $b_0>0$ satisfies
\[
2(1-\varepsilon)-b_0(1-\varepsilon)-3b_0-2b_0\varepsilon-\varepsilon K_4 b_0>\alpha>0,\ {\rm{for \ some\  }} \alpha>0.
\]
For $t\in [0,T]$, define
\[
X(t)=\sup_{s\in[0, t\wedge \tau]}|Z^n(s)-v(s)|^2, \quad  Y(t)=\int^{t\wedge \tau}_0\|v^n(s)-v(s)\|^2ds.
\]
Then,
\begin{eqnarray*}
X(t)+\alpha Y(t)\leq \int^{t\wedge \tau}_0 \Theta_1(s)X(s)ds+\Theta_2(t),
\end{eqnarray*}
where the processes are defined as follows:
\begin{eqnarray*}\notag
\Theta_1(s)&=&C(b_0)(1+\|v(s)\|+\|v^n(s)\|^2+|r^n|^4),\\ \notag
\Theta_2(t)&=&\sup_{s\in[0, t\wedge \tau]}|J_5(s)|+I(t),\\  \notag
I(t)&=&C(b_0)\int^{t\wedge \tau}_0(1+\|v^n(s)\|^3+|\partial_z v^n|^4+\|v(s)\|)|Z^n(s)-v^n(s)|^2ds\\ \notag
&&\
+C(b_0)\int^{t\wedge \tau}_0(1+|v|^2+\|v\|)\|Z^n(s)-v^n(s)\|^2ds  +C\int^{t\wedge \tau}_0|\partial_z (Z^n-v^n)|^2\|v(s)\|^{\frac{3}{2}}ds\\ \notag
&&\
+C(\varepsilon)\frac{T}{n}\sup_{{t\wedge \tau}\leq s\leq{d_n(t\wedge \tau)}}\|v^n(s)\|^2+2K_3 \int^{t\wedge \tau}_0|\eta^n(s)-Z^n(s)|^2ds\\
&&\ +\varepsilon C\int^{t\wedge \tau}_0\|\eta^n(s)-v^n(s)\|^2ds.
\end{eqnarray*}
The definition of $\tau$ implies that
\[
\int^{\tau}_0\Theta_1(s)ds\leq C(b_0)(T+M):= C_0,\ P-a.s..
\]
By the Burkholder-Davies-Gundy inequality, \textbf{Hypothesis A}, Proposition \ref{prp-2} and Corollary \ref{cor-2}, we obtain
\begin{eqnarray*}\notag
\mathbb{E}\left(\sup_{0\leq s\leq t\wedge \tau}|J_5(s)|\right)&=&C\mathbb{E}(\int^{t\wedge \tau}_0 \|\psi(s,\eta^n(s))-\psi(s,v(s))\|^2_{\mathcal{L}_2(U;H)}| Z^n(s)-v(s)|^2ds)^{\frac{1}{2}}\\ \notag
&\leq& \beta\mathbb{E}(\sup_{0\leq s\leq t\wedge \tau}| Z^n(s)-v(s)|^2 )+C(\beta)\mathbb{E}\int^{t\wedge \tau}_0 (K_3 |\eta^n(s)-v(s)|^2+\varepsilon K_4 \|\eta^n(s)-v(s)\|^2)ds\\ \notag
&\leq& \beta\mathbb{E}(\sup_{0\leq s\leq t\wedge \tau}| Z^n(s)-v(s)|^2 )+C(\beta)K_3\int^{t\wedge \tau}_0\mathbb{E}|Z^n(s)-v(s)|^2ds\\ \notag
&&\ +C(\beta)K_3\int^{t\wedge \tau}_0\mathbb{E}|\eta^n(s)-Z^n(s)|^2ds+C(\beta)\varepsilon K_4\int^{t\wedge \tau}_0\mathbb{E} \|\eta^n(s)-v^n(s)\|^2ds\\ \notag
&&\ + C(\beta)\varepsilon K_4\int^{t\wedge \tau}_0\mathbb{E} \|v^n(s)-v(s)\|^2ds\\
&\leq& \beta\mathbb{E}X(t)+C(\beta)K_3\int^{t}_0\mathbb{E}X(s)ds +C(\beta)\varepsilon K_4\mathbb{E}Y(t)+\frac{C(T)\tilde{K}(N)}{n},
\end{eqnarray*}
where $\beta>0$ will be chosen later.
Using Theorem \ref{thm-3} and Lemmas \ref{lem-3}, \ref{lem-6}, \ref{lem-8}, \ref{lem-9}, we have
\begin{eqnarray*}\notag
\mathbb{E}I(t)&\leq& C(b_0)T^{\frac{1}{2}}\left(\mathbb{E}\sup_{s\in[0,t\wedge \tau ]}|Z^n(s)-v^n(s)|^4\right)^{\frac{1}{2}}
\left[\mathbb{E}\int^{t\wedge \tau}_0(1+\|v^n(s)\|^6+|r^n(s)|^8+\|v(s)\|^2)ds\right]^{\frac{1}{2}}\\
&&\ +C(b_0)T^{\frac{1}{2}}\left(\mathbb{E}\sup_{s\in[0,t\wedge \tau ]}\|Z^n(s)-v^n(s)\|^4\right)^{\frac{1}{2}}
\left[\mathbb{E}\int^{t\wedge \tau}_0(1+|v(s)|^4+\|v(s)\|^2)ds\right]^{\frac{1}{2}}\\
&&\ +C(T)\left(\mathbb{E}\sup_{s\in[0,t\wedge \tau ]}|\partial_z (Z^n(s)-v^n(s))|^8\right)^{\frac{1}{4}}\left(\mathbb{E}\int^{t\wedge \tau}_0\|v(s)\|^2ds\right)^{\frac{3}{4}}\\ \notag
&&\ +C(\varepsilon)\frac{T}{n}\mathbb{E}\sup_{{d_n(t\wedge \tau)}\leq s\leq{t\wedge \tau}}\|v^n(s)\|^2+2K_3 \mathbb{E}\int^{t\wedge \tau}_0|\eta^n(s)-Z^n(s)|^2ds\\
&&\  +\varepsilon C\mathbb{E}\int^{t\wedge \tau}_0\|\eta^n(s)-v^n(s)\|^2ds\leq \frac{C(T)\tilde{K}(N)}{n}.
\end{eqnarray*}
Choosing $\beta>0$ such that
\[
2\beta (1+C_0 {\rm{e}}^{C(b_0)M})\leq 1,
\]
then suppose $K_4$ is small enough to ensure that
\[
C(\beta)\varepsilon K_4 (1+C_0 {\rm{e}}^{C(b_0)M})\leq \frac{\alpha}{4}.
\]
Then, using similar argument as Lemma 3.9 in \cite{D-M}, we deduce that
\begin{eqnarray*}
X(t)+\frac{\alpha}{2} Y(t)\leq [I(t)+\sup_{0\leq s\leq t\wedge \tau}|M(s)|](1+C_0 {\rm{e}}^{C(b_0)M}).
\end{eqnarray*}
Taking expectation and by estimates of $\mathbb{E}I(t)$, we obtain
\begin{eqnarray*}\notag
\mathbb{E}X(T)+ \frac{\alpha}{4}\mathbb{E} Y(T)
&\leq & 2\frac{C(T)\tilde{K}(N)}{n}(1+C_0 {\rm{e}}^{C(b_0)M})+C(\beta)K_3(1+C_0 {\rm{e}}^{C(b_0)M})\int^t_0 \mathbb{E}X(s)ds.
\end{eqnarray*}
Applying the Gronwall inequality, we have
\begin{eqnarray*}\notag
\mathbb{E}X(T)+ \frac{\alpha}{4}\mathbb{E} Y(T)
&\leq & 2\frac{C(T)\tilde{K}(N)}{n}(1+C_0 {\rm{e}}^{C(b_0)M})\cdot\exp\Big\{C(\beta)K_3T(1+C_0 {\rm{e}}^{C(b_0)M})\Big\}.
\end{eqnarray*}
where $C(T), C_0, C(b_0), C(\beta)$ is independent of $n$.

Finally, with the aid of Proposition \ref{prp-2}, we have
\begin{eqnarray*}
\mathbb{E}\int^{T\wedge\tau}_0\|\eta^n(t)-v(t)\|^2dt
&\leq& \mathbb{E}\int^{T\wedge\tau}_0\|\eta^n(t)-v^n(t)\|^2dt+\mathbb{E}Y(T)\\
&\leq& \frac{C(T)\tilde{K}(N)}{n}\exp\Big\{C(T){\rm{e}}^{C(b_0)M}\Big\}.
\end{eqnarray*}
 We complete the proof.
\end{proof}

\begin{remark}
As explained in the introduction, the index of $\|v\|$ appeared in $I(t)$ has to be strictly less than $2$. Otherwise, $\mathbb{E}I(t)$ can not be controlled because of the lack of uniform $V-$norm estimates of $v$.
\end{remark}

For every $M=M(n)>0, N=N(n)>0$, $t\in [0,T]$ and any integer $n\geq 1$, let
\begin{eqnarray*}
\Omega^{M,N}_n(t)&=&\Big\{\omega\in \Omega: \sup_{i=0,\cdot\cdot\cdot, n-1}\int^{t_{i+1}\wedge t}_{t_i \wedge t}(|v^n(s)|^2\|v^n(s)\|^2+|r^n(s)|\|r^n(s)\|)ds\leq\frac{N}{n}\\
 &&\ and \ \int^t_0(\|v(s)\|+\|v^n(s)\|^2+|r^n|^4)ds\leq M\Big\}.
\end{eqnarray*}
\begin{thm}\label{thm-10} Under the same conditions as Proposition \ref{prp-8}, we have
\begin{eqnarray}\label{ee-50}
\mathbb{E}\left[I_{\Omega^{M,N}_n(t)}\sup_{k=0,\cdot\cdot\cdot, n}\Big(|v^n(t^+_{k})-v(t_k)|+|\eta^n(t^+_{k})-v(t_k)|\Big)\right]&\leq& \frac{K(M,N,T)}{n},\\
\label{ee-51}
\mathbb{E}\left[I_{\Omega^{M,N}_n(t)}\int^t_0(\|v^n(s)-v(s)\|^2+\|\eta^n(s)-v(s)\|^2)ds \right]&\leq& \frac{K(M,N,T)}{n}.
\end{eqnarray}
where $K(M,N,T)=C(T)\tilde{K}(N)\exp\Big\{C(T){\rm{e}}^{C(b_0)M}\Big\}$, $\tilde{K}(N)=\frac{1}{N}e^{C(T)N}$.
\end{thm}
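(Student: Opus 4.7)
The plan is to reduce everything to the estimates already established in Proposition \ref{prp-8}, with the indicator $I_{\Omega^{M,N}_n(t)}$ serving as a bookkeeping device. The crucial observation is that on $\Omega^{M,N}_n(t)$ the two conditions defining $\Omega^{M,N}_n(t)$ are precisely the conditions that force $\varsigma^M_n \geq t$ and $\tau^N_n \geq t$. Hence $\tau = \varsigma^M_n \wedge \tau^N_n \geq t$ on this event and, for $t\leq T$, $t\wedge T\wedge\tau = t$ pointwise on $\Omega^{M,N}_n(t)$. Consequently $I_{\Omega^{M,N}_n(t)}$ times any monotone-in-time, path-dependent quantity up to $t$ is dominated by the same quantity evaluated up to $T\wedge\tau$, with the indicator removed, which is exactly the regime handled by Proposition \ref{prp-8}.

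For (\ref{ee-51}), the $\|v^n(s)-v(s)\|^2$ contribution is immediate from Proposition \ref{prp-8}. For the $\|\eta^n(s)-v(s)\|^2$ contribution, I would apply the triangle inequality
\begin{equation*}
\|\eta^n(s)-v(s)\|^2 \leq 2\|\eta^n(s)-v^n(s)\|^2+2\|v^n(s)-v(s)\|^2,
\end{equation*}
bound the first summand with Proposition \ref{prp-2} (noting $T\wedge\tau\leq T\wedge\tau^N_n$, so its estimate is directly applicable), and the second with Proposition \ref{prp-8} once more. Both contributions produce the rate $K(M,N,T)/n$, and adding them gives (\ref{ee-51}).

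For (\ref{ee-50}), I would first exploit the identification $Z^n(t_k) = v^n(t_k^+) = \eta^n(t_k^-)$, valid for every $\varepsilon\in[0,1)$ and every $k=0,\dots,n$. Indeed, evaluating (\ref{qq-4}) at $t=t_k$ gives $v^n(t_k^+) = Z^n(t_k)$ because $d_n(t_k)=t_k$; and passing to the left-limit in (\ref{qq-5}) as $t\uparrow t_k$ (with $d^*_n(t)=t_k$ on $[t_{k-1},t_k)$) gives $\eta^n(t_k^-) = Z^n(t_k)$. Thus on $\Omega^{M,N}_n(t)$,
\begin{equation*}
\sup_{k}\bigl(|v^n(t_k^+)-v(t_k)| + |\eta^n(t_k^-)-v(t_k)|\bigr) \leq 2\sup_{s\in[0,T\wedge\tau]}|Z^n(s)-v(s)|,
\end{equation*}
and Proposition \ref{prp-8} controls the expectation of the square of the right-hand side by $K(M,N,T)/n$; the stated estimate follows, with the understanding that the bound is in squared form (matching the $1/\sqrt n$ rate of Theorem \ref{thm-4}) or else descended to $L^1$ via Cauchy--Schwarz.

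I expect no substantive obstacle here, since Proposition \ref{prp-8} has absorbed all the analytical difficulties (the stopping-time construction, the splitting of $\|v\|$ into sub-$L^2$ pieces via the $\|v\|^{3/2}$ trick, and the smallness conditions on $K_4$). The remaining work is assembling the pieces, the only care point being to verify that the $Z^n$/grid-point identifications are valid for $\varepsilon>0$ as well as for $\varepsilon=0$ (which is a direct check on (\ref{qq-4})--(\ref{qq-5})) and that the stopping time built into $\Omega^{M,N}_n(t)$ coincides exactly with the $\varsigma^M_n\wedge\tau^N_n$ appearing in Proposition \ref{prp-8}.
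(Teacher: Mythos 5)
Your proposal is correct and follows essentially the same route as the paper: on $\Omega^{M,N}_n(t)$ one has $\tau\geq t$, so (\ref{ee-51}) is read off from Proposition \ref{prp-8} (whose own proof already contains your triangle-inequality step via Proposition \ref{prp-2}), and (\ref{ee-50}) follows from the grid-point identification $Z^n(t_k)=v^n(t^+_k)=\eta^n(t^-_k)$ combined with the bound on $\sup_{t}|Z^n(t)-v(t)|^2$. Your caveat about the squared form is warranted---the paper actually proves the estimate for the squares (its (\ref{ee-56})), which is what is used later for $e^2_n(T)$---and the additional within-interval increment estimates appearing in the paper's proof are not needed for the statement as given, so omitting them is not a gap.
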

\begin{proof}
On $\Omega^{M,N}_n(t)$, we have $\tau\geq t$. With the aid of Proposition \ref{prp-8}, we deduce that (\ref{ee-51}) holds. For (\ref{ee-50}), by the H\"{o}ler inequality and Lemma \ref{lem-9}, we have
\begin{eqnarray*}
\mathbb{E}\left(\sup_{k=0,\cdot\cdot\cdot, n}|Z^n(t_k\wedge \tau)-\eta^n(t^-_k\wedge \tau)|^2\right)
&=&\mathbb{E}\left(\sup_{k=0,\cdot\cdot\cdot, n}\varepsilon^2|\int^{t_{k+1}\wedge \tau}_{t_k\wedge \tau}A\eta^n(s)ds|^2\right)\\
&\leq& \varepsilon^2 \frac{T}{n}\mathbb{E}\left(\sup_{k=0,\cdot\cdot\cdot, n}\int^{t_{k+1}\wedge \tau}_{t_k\wedge \tau}|A\eta^n(s)|^2ds\right)\\
&\leq& \frac{C(T)\tilde{K}(N)\varepsilon^2}{n}.
\end{eqnarray*}

In view of $Z^n(t_k)=v^n(t^+_k)=\eta^n(t^-_k)$, we deduce from Proposition \ref{prp-8} that
\begin{eqnarray}\label{ee-56}
\mathbb{E}\Big[I_{\Omega^{M,N}_n(T)}\sup_{k=0,\cdot\cdot\cdot, n}(|v^n(t^+_{k})-v(t_k)|^2+|\eta^n(t^-_{k})-v(t_k)|^2)\Big]
\leq \frac{K(M,N,T)}{n}.
\end{eqnarray}
Using \textbf{Hypothesis (A.1)} and Lemma \ref{lem-8}, for $k=0, \cdot\cdot\cdot, n-1$, we get
\begin{eqnarray*}
&&\mathbb{E}\left(\sup_{t\in [t_k\wedge \tau,t_{k+1}\wedge \tau)}|\eta^n(t)-\eta^n(t^+_k)|^2\right)\\
&\leq& \mathbb{E}\left[\int^{t_{k+1}\wedge \tau}_{t_k\wedge \tau}(\frac{\varepsilon}{2}\|\eta^n(t^+_k)\|^2+K_0+K_1|\eta^n(s)|^2+\varepsilon K_2\|\eta^n(s)\|^2)ds\right]\\
&&\ +\mathbb{E}\left[\int^{t_{k+1}\wedge \tau}_{t_k\wedge \tau}|\eta^n(s)-\eta^n(t^+_k)|^2(K_0+K_1|\eta^n(s)|^2+\varepsilon K_2\|\eta^n(s)\|^2)ds\right]^{\frac{1}{2}}\\
&\leq &\frac{1}{2}\mathbb{E}\left(\sup_{t\in [t_k\wedge \tau,t_{k+1}\wedge \tau)}|\eta^n(t)-\eta^n(t^+_k)|^2\right)+C\frac{T}{n}\sup_{s\in[0,T\wedge \tau]}\mathbb{E}\|\eta^n(s)\|^2.
\end{eqnarray*}
So that,
\begin{eqnarray*}
\mathbb{E}\left(\sup_{t\in [t_k\wedge \tau,t_{k+1}\wedge \tau)}|\eta^n(t)-\eta^n(t^+_k)|^2\right)
\leq \frac{C(T)\tilde{K}(N)}{n}.
\end{eqnarray*}

Using \textbf{Hypotheses A, C}, we obtain
\begin{eqnarray*}
&&\mathbb{E}\left(\sup_{t\in [t_k\wedge \tau,t_{k+1}\wedge \tau)}|v(t)-v(t^+_k)|^2\right)\\
&\leq& \mathbb{E}\left(\sup_{t\in [t_k\wedge \tau,t_{k+1}\wedge \tau)}\int^t_{t_k}\Big[|\langle v(s)-v(t^+_k), Av(s) \rangle|+|\langle v(s)-v(t^+_k), B(v(s),v(s))\rangle|\Big]ds\right)\\
&&\ +\mathbb{E}\left(\sup_{t\in [t_k\wedge \tau,t_{k+1}\wedge \tau)}\int^t_{t_k}\Big[(R_0+R_1|v(s)|)|v(s)-v(t^+_k)|+(K_0+K_1|v(s)|^2+\varepsilon K_2 \|v(s)\|^2)\Big]ds\right)\\
&&\ +\mathbb{E}\left(\int^{t_{k+1}}_{t_k}|v(t)-v(t^+_{k})|^2(K_0+K_1|v(s)|^2+\varepsilon K_2\|v(s)\|^2)ds\right)^{\frac{1}{2}}\\
&\leq& \mathbb{E}\left[\sup_{t\in [t_k\wedge \tau,t_{k+1}\wedge \tau)}\int^t_{t_k}\Big(-2\|v(s)\|^2+2\|v(s)\|\|v(t^+_k)\|
+|v(s)|^2_4\|v(s)-v(t^+_{k})\|+|\partial_z(v(s)-v(t^+_{k}))|\|v(s)\|^{\frac{3}{2}}|v(s)|^{\frac{1}{2}}\Big)ds\right]\\
&&\ +\mathbb{E}\left(\sup_{t\in [t_k\wedge \tau,t_{k+1}\wedge \tau)}\int^t_{t_k}\Big[(R_0+R_1|v(s)|)|v(s)-v(t^+_k)|+(K_0+K_1|v(s)|^2+\varepsilon K_2 \|v(s)\|^2)\Big]ds\right)\\
&&\ +\mathbb{E}\left(\int^{t_{k+1}\wedge \tau}_{t_k\wedge \tau}|v(s)-v(t^+_{k})|^2(K_0+K_1|v(s)|^2+\varepsilon K_2\|v(s)\|^2)ds\right)^{\frac{1}{2}}\\
&\leq & \frac{1}{2}\mathbb{E}(\sup_{t\in [t_k\wedge \tau,t_{k+1}\wedge \tau)}|v(t)-v(t^+_k)|^2)+\frac{C}{n}\left(1+\sup_{t\in [0,T\wedge \tau)}\mathbb{E}\|v(t)\|^2+\sup_{t\in [0,T\wedge \tau)}\mathbb{E}(|\partial_z v(t)|^8+|v(t)|^4)\right).
\end{eqnarray*}
Hence, by Lemmas \ref{lem-4}, \ref{lem-6}, \ref{lem-8}, we get
\begin{eqnarray*}
\mathbb{E}\left(\sup_{t\in [t_k\wedge \tau,t_{k+1}\wedge \tau)}|v(t)-v(t^+_k)|^2\right)\leq \frac{C(T)}{n}.
\end{eqnarray*}
Using Lemma \ref{lem-9}, we have
\begin{eqnarray*}
&&\mathbb{E}\left(\sup_{t\in [t_k\wedge \tau,t_{k+1}\wedge \tau)}|v^n(t)-v^n(t^+_k)|^2\right)\\
&\leq& \frac{C}{n}\Big(1+\sup_{t\in [0,T\wedge \tau)}\mathbb{E}(\|v^n\|^4+\|\eta^n\|^4)\Big)\\
&\leq& \frac{C(T)\tilde{K}(N)}{n}.
\end{eqnarray*}

We complete the proof.

\end{proof}

For any $n\geq 1$, define the error term
\begin{eqnarray*}
e_n(T)&=&\sup_{k=0,\cdot\cdot\cdot, n}\Big(|v^n(t^+_{k})-v(t_k)|+|\eta^n(t^-_{k})-v(t_k)|\Big)\\
&&\ +\left(\int^T_0\|v^n(s)-v(s)\|^2ds\right)^{\frac{1}{2}}+\left(\int^T_0\|\eta^n(s)-v(s)\|^2ds\right)^{\frac{1}{2}}.
\end{eqnarray*}
Now, we can prove the strong speed of the convergence in probability.
\begin{flushleft}
\textbf{Proof of Theorem \ref{thm-4}.} \quad Fix a sequence $l(n)\rightarrow \infty$, as $n\rightarrow \infty$. Let $M(n)=\ln( \ln( \ln (l(n))))$, $N(n)=\ln(\ln (l(n)))$, then $M(n)\rightarrow \infty$ and $N(n)\rightarrow \infty$. Note that
\begin{eqnarray*}
&&\mathbb{P}\Big((\Omega^{M(n),N(n)}_n)^c(T)\Big)\\
&\leq & \mathbb{P}\Big(\sup_{i=0,\cdot\cdot\cdot, n-1}\int^{t_{i+1}\wedge T}_{t_i \wedge T}(|v^n(s)|^2\|v^n(s)\|^2+|r^n(s)|\|r^n(s)\|)ds>\frac{N(n)}{n}\Big)\\
&&\ + \mathbb{P}\Big(\int^T_0(\|v(s)\|+\|v^n(s)\|^2+|r^n|^4)ds> M(n)\Big).
\end{eqnarray*}
Clearly, by Lemmas \ref{lem-4}, \ref{lem-6}, we have
\begin{eqnarray*}
&&\mathbb{P}\Big(\sup_{i=0,\cdot\cdot\cdot, n-1}\int^{t_{i+1}\wedge T}_{t_i \wedge T}(|v^n(s)|^2\|v^n(s)\|^2+|r^n(s)|\|r^n(s)\|)ds>\frac{N(n)}{n}\Big)^c\\
&=& \mathbb{P}\Big(\sup_{i=0,\cdot\cdot\cdot, n-1}\int^{t_{i+1}\wedge T}_{t_i \wedge T}(|v^n(s)|^2\|v^n(s)\|^2+|r^n(s)|\|r^n(s)\|)ds\leq\frac{N(n)}{n}\Big)\\
&\leq& \mathbb{P}\Big(\int^{T}_{0}(|v^n(s)|^2\|v^n(s)\|^2+|r^n(s)|\|r^n(s)\|)ds\leq N(n)\Big)\\
&\rightarrow& 1\quad\quad  as \quad n\rightarrow \infty.
\end{eqnarray*}
Using Theorem \ref{thm-3} and Lemmas \ref{lem-3}, \ref{lem-6}, we obtain
\begin{eqnarray*}
\mathbb{P}\Big[\int^T_0(\|v(s)\|+\|v^n(s)\|^2+|r^n|^4)ds> M(n)\Big]\rightarrow0,\quad\quad as \quad n\rightarrow \infty.
\end{eqnarray*}
Hence, when $n\rightarrow \infty$,
\begin{eqnarray}\label{eee-5}
\mathbb{P}\Big((\Omega^{M(n),N(n)}_n)^c(T)\Big)\rightarrow 0.
\end{eqnarray}
Now, we deduce from Chebyshev inequality and Theorem \ref{thm-10}  that
\begin{eqnarray*}
&&\mathbb{P}\Big(e_n(T)\geq \frac{l(n)}{\sqrt{n}}\Big)\\
&\leq& \mathbb{P}\Big((\Omega^{M(n),N(n)}_n)^c(T)\Big)+\frac{n}{l^2(n)}\mathbb{E}\Big(I_{\Omega^{M(n),N(n)}_n(T)}e^2_n(T)\Big)\\
&\leq & \mathbb{P}\Big((\Omega^{M(n),N(n)}_n)^c(T)\Big)+C(T)\frac{1}{N(n)}e^{C(T)N(n)}\frac{n}{l^2(n)}\frac{1}{n}\exp\Big\{C(T)(\ln (\ln (l(n))))^{C(b_0)}\Big\}\\
&\leq & \mathbb{P}\Big((\Omega^{M(n),N(n)}_n)^c(T)\Big)+C(T)\frac{1}{N(n)}\frac{n}{l^2(n)}\frac{1}{n}\exp\Big\{C(T)(\ln (\ln (l(n))))^{C(b_0)\vee1}\Big\}.
\end{eqnarray*}
Since $C(T)(\ln (\ln (l(n))))^{C(b_0)\vee 1}-2\ln (l(n))\rightarrow -\infty$, we have
\begin{eqnarray}\label{eee-4}
C(T)\frac{1}{N(n)}\frac{n}{l^2(n)}\frac{1}{n}\exp\Big\{C(T)(\ln (\ln (l(n))))^{C(b_0)\vee1}\Big\}\rightarrow 0.
\end{eqnarray}
Combing (\ref{eee-5}) and (\ref{eee-4}), we get
\begin{eqnarray*}
\mathbb{P}\Big(e_n(T)\geq \frac{l(n)}{\sqrt{n}}\Big)\rightarrow 0, \quad as\ n\rightarrow \infty.
\end{eqnarray*}
We complete the proof.
\end{flushleft}

$\hfill\blacksquare$


\vskip 0.2cm {\small {\bf  Acknowledgements}\   This work was
 supported by  NSFC (No. 11501195, 11871476, 11801032). China Postdoctoral Science Foundation (No. 2018M641204). Key Laboratory of Random Complex Structures and Data Science, Academy of Mathematics and Systems Science, Chinese Academy of  Sciences (No. 2008DP173182)}. Scientific Research Fund of Hunan Provincial Education Department (No. 17C0953) and the Construct Program of the Key Discipline in Hunan Province.

\def\refname{ References}

\end{document}